\newtheorem{Thm}{Theorem}[section]
\newtheorem{Lem}[Thm]{Lemma}
\newtheorem{Prop}[Thm]{Proposition}
\newcommand{\1}{\mathbf{1}}
\newcommand{\R}{\mathbb{R}}
\newcommand{\Rd}{{\mathbb{R}^3}}
\renewcommand{\P}{\mathbf{P}}
\renewcommand{\Re}{\text{Re}}
\newcommand{\F}{\mathscr{F}}
\newcommand{\N}{\mathbb{N}}
\newcommand{\C}{\mathbb{C}}
\newcommand{\E}{\mathcal{E}}
\newcommand{\D}{\mathcal{D}}
\renewcommand{\Re}{\text{Re}}
\renewcommand{\S}{\mathbb{S}}
\newcommand{\<}{\langle}
\renewcommand{\>}{\rangle}
\newcommand{\T}{\mathcal{T}}
\newcommand{\I}{\mathbf{I}}
\newcommand{\II}{\mathbf{I}_{\pm}}
\renewcommand{\P}{\mathbf{P}}
\newcommand{\PP}{\mathbf{P}_{\pm}}
\newcommand{\vertiii}[1]{{\left\vert\kern-0.25ex\left\vert\kern-0.25ex\left\vert #1 \right\vert\kern-0.25ex\right\vert\kern-0.25ex\right\vert}}
\title{Regularity of the Vlasov-Poisson-Boltzmann System without angular cutoff}
\author{Dingqun DENG \thanks{email: dingqdeng2-c@my.cityu.edu.hk, Department of Mathematics, City University of Hong Kong, ORCID: 0000-0001-9678-314X } }
\begin{document}

\maketitle

\begin{abstract}
In this paper we study the regularity of the non-cutoff Vlasov-Poisson-Boltzmann system for plasma particles of two species in the whole space $\mathbb{R}^3$ with hard potential. The existence of global-in-time nearby Maxwellian solutions is known for soft potential from \cite{Duan2013}. However the smoothing effect of these solutions has been a challenging open problem. We establish the global existence and regularizing effect to the Cauchy problem for hard potential with large time decay. Hence, the solutions are smooth with respect to $(t,x,v)$ for any positive time $t>0$. This gives the regularity to Vlasov-Poisson-Boltzmann system, which enjoys a similar smoothing effect as Boltzmann equation. The proof is based on the time-weighted energy method building also upon the pseudo-differential calculus.

	\paragraph{Keywords} Vlasov-Poisson-Boltzmann system, global existence, regularity, non-cutoff, regularizing effect, large time decay.
	\paragraph{MSC 2020}
76P05, 76X05, 35Q20, 82C40.
\end{abstract}

\tableofcontents

\section{Introduction}
The Vlasov-Poisson-Boltzmann system is an important physical model to describe the time evolution of plasma particles of two species (e.g. ions and electrons). 
This work contains two main results. The first one is the global-in-time existence of two species Vlasov-Poisson-Boltzmann system for non-cutoff hard potential, which provide a global energy control with optimal large time decay. Secondly, under this global-in-time energy control, the regularizing effect of Vlasov-Poisson-Boltzmann system is discovered at any positive time. Such smoothing effect is a long existing open problem since \cite{Duan2013}, where Duan and Liu successfully found the global solution for non-cutoff soft potential with $1/2\le s<1$. Moreover, the smoothing method in this paper should be applicable to other kind of kinetic system with the transport term and high-order dissipation term.

\paragraph{Model and Equation.} We consider the Vlasov-Poisson-Boltzmann system of two species in the whole space $\R^3$, cf. \cite{Krall1973}:
\begin{equation}\begin{aligned}\label{1}
	\partial_tF_+ + v\cdot\nabla_xF_+ +E\cdot\nabla_vF_+ = Q(F_+,F_+) + Q(F_-,F_+),\\
	\partial_tF_- + v\cdot\nabla_xF_- -E\cdot\nabla_vF_- = Q(F_-,F_-) + Q(F_+,F_-).
\end{aligned}
\end{equation}
The self-consistent electrostatic field is taken as $E(t,x) = -\nabla_x\phi$, with the electric potential $\phi$ given by 
\begin{align}\label{2}
	-\Delta_x\phi = \int_{\Rd}(F_+-F_-)\,dv, \quad  \phi\to 0 \text{ as } |x|\to\infty.
\end{align}
The initial data of the system is 
\begin{align}\label{3}
	F_\pm(0,x,v) = F_{\pm,0}(x,v). 
\end{align}
The unknown function $F_\pm(t,x,v)\ge 0$ represents the velocity distribution for the particle with position $x\in \Rd$ and velocity $v\in\Rd$ at time $t\ge 0$. The bilinear collision term $Q(F,G)$ on the right hand side of \eqref{1} is given by 
\begin{align}
	Q(F,G)(v) = \int_{\Rd}\int_{\S^2}B(v-v_*,\sigma)\big(F'_*G' - F_*G\big)\,d\sigma dv_*, 
\end{align} 
where $F' = F(x,v',t)$, $G'_* = G(x,v'_*,t)$, $F = F(x,v,t)$, $G_* =G(x,v_*,t)$. The more rigorous definition in the form of Carleman representation can be found in \cite{Deng2020}. $(v,v_*)$ is the velocity before the collision and $(v',v'_*)$ is the velocity after the collision. They are defined by 
\begin{align*}
	v' = \frac{v+v_*}{2}+\frac{|v-v_*|}{2}\sigma,\ \
	v'_* = \frac{v+v_*}{2}-\frac{|v-v_*|}{2}\sigma.
\end{align*}
This two pair of velocities satisfy the conservation law of momentum and energy:
\begin{align*}
	v+v_*=v'+v'_*,\ \ |v|^2+|v_*|^2=|v'|^2+|v'_*|^2.
\end{align*}

\paragraph{Collision Kernel.}
The Boltzmann collision kernel $B$ is defined as 
\begin{align*}
	B(v-v_*,\sigma) = |v-v_*|^\gamma b(\cos\theta),
\end{align*}
for some function $b$ and $\gamma>-3$ determined by the intermolecular interactive mechanism with $\cos\theta=\frac{v-v_*}{|v-v_*|}\cdot \sigma$. Without loss of generality, we can assume $B(v-v_*,\sigma)$ is supported on $(v-v_*)\cdot\sigma\ge 0$, which corresponds to $\theta\in(0,\pi/2]$, since $B$ can be replaced by its symmetrized form $\overline{B}(v-v_*,\sigma) = B(v-v_*,\sigma)+B(v-v_*,-\sigma)$ in $Q(f,f)$.
The angular function $\sigma\mapsto b(\cos\theta)$ is not integrable on $\S^2$. Moreover, there exists $0<s<1$ such that 
\begin{align*}
	\sin\theta b(\cos\theta)\approx \theta^{-1-2s}\ \text{ on }\theta \in (0,\pi/2],
\end{align*}
It's convenient to call soft potential when $\gamma+2s< 0$, and hard potential when $\gamma+2s\ge0$. In this work, we always assume
\begin{align}
	s\in (0,1), \quad \gamma\in (-3,\infty) \text{ and } \gamma+2s\ge 0.
\end{align}

In this paper, we are going to establish the global existence as well as the smoothing effect of the solutions to Cauchy problem \eqref{1}-\eqref{3} of the Vlasov-Poisson-Boltzmann system near the global Maxwellian equilibrium. For global existence, Guo \cite{Guo2002} firstly investigate hard-sphere model of the Vlasov-Poisson-Boltzmann system in a periodic box. Since then, the energy method was largely developed for Boltzmann equation with the self-consistent electric and magnetic fields; see \cite{Duan2013,Guo2012,Duan2011}.  
For smoothing effect of Boltzmann equation, since the work \cite{Alexandre2000} discover the entropy dissipation property for non-cutoff linearized Boltzmann operator, there's been many discussion in different context. See \cite{Alexandre2011,Global2019,Alexandre2013,Gressman2011a,Mouhot2007} for the dissipation estimate of collision operator, and \cite{Alexandre2011aa,Alexandre2010,Barbaroux2017a,Chen2018,Chen2011,Chen2012,Deng2020b} for smoothing effect of the solution to Boltzmann equation in different aspect. These works show that the Boltzmann operator behaves locally like a fractional operator:
\begin{align*}
	Q(f,g)\sim (-\Delta_v)^sg+\text{lower order terms}.
\end{align*}
More precisely, according to the symbolic calculus developed by \cite{Global2019}, the linearized Boltzmann operator behaves essentially as 
\begin{align*}
	L \sim \<v\>^\gamma(-\Delta_v-|v\wedge\partial_v|^2+|v|^2)^s+\text{lower order terms}.
\end{align*}
However, until now, the smoothing effect of the solutions to Vlasov-Poisson-Boltzmann system remains open and to the best of our knowledge, this is the first paper discussing such smoothing phenomenon.

\paragraph{Reformulation.} We will reformulate the problem near Maxwellian as in \cite{Guo2002}. For this we denote a normalized global Maxwellian $\mu$ by 
\begin{align}
	\mu(v) = (2\pi)^{-3/2}e^{-|v|^2/2}. 
\end{align}
Set $F_\pm(t,x,v)=\mu(v) +\mu^{1/2}f_\pm(t,x,v)$. Denote $f=(f_+,f_-)$ and $f_0=(f_{+,0},f_{-,0})$. Then the Cauchy problem \eqref{1}-\eqref{3} can be reformulated as 
\begin{equation}\label{7}
	\partial_tf_\pm + v\cdot\nabla_xf_\pm \pm \frac{1}{2}\nabla_x\phi\cdot vf_\pm  \mp\nabla_x\phi\cdot\nabla_vf_\pm \pm \nabla_x\phi\cdot v\mu^{1/2} - L_\pm f = \Gamma_{\pm}(f,f),
\end{equation}
\begin{equation}\label{8}
	-\Delta_x \phi = \int_{\Rd}(f_+-f_-)\mu^{1/2}\,dv, \quad \phi\to 0\text{ as }|x|\to\infty,
\end{equation}
with initial data 
\begin{align}\label{9}
	f_\pm(0,x,v) = f_{\pm,0}(x,v). 
\end{align}
The linear operator $L=(L_+,L_-)$ and $\Gamma = (\Gamma_+,\Gamma_-)$ are gives as 
\begin{equation*}
	L_\pm f = \mu^{-1/2}\Big(2Q(\mu,\mu^{1/2}f_\pm) + Q(\mu^{1/2}(f_\pm+f_\mp),\mu)\Big),
\end{equation*}
\begin{equation*}
	\Gamma_\pm(f,g) = \mu^{-1/2}\Big(Q(\mu^{1/2}f_\pm,\mu^{1/2}g_\pm) + Q(\mu^{1/2}f_\mp,\mu^{1/2}g_\pm)\Big).
\end{equation*}
For later use, we introduce the bilinear operator $\T$ by 
\begin{align*}
	\T_\beta(h_1,h_2) = \int_{\Rd}\int_{\S^2}B(v-v_*,\sigma)\partial_\beta(\mu^{1/2}_*)\big(h_1(v'_*)h_2(v')-h_1(v_*)h_2(v)\big)\,d\sigma dv_*, 
\end{align*}
for two scalar functions $h_1,h_2$ and especially $\T=\T_0$. Thus, 
\begin{equation*}
		L_\pm f = 2\T(\mu^{1/2},f_\pm) + \T(f_\pm+f_\mp,\mu^{1/2}),\\
	\end{equation*}
\begin{equation*}
		\Gamma_\pm(f,g) = \T(f_\pm,g_\pm) + \T(f_\mp,g_\pm).
\end{equation*}

\paragraph{Notations.}
Through the paper, $C$ denotes some positive constant (generally large) and $\lambda$ denotes some positive constant (generally small), where both $C$ and $\lambda$ may take different values in different lines. $(\cdot|\cdot)$ is the inner product in $\C^n$. 
For any $v\in\Rd$, we denote $\<v\>=(1+|v|^2)^{1/2}$. For multi-indices $\alpha=(\alpha_1,\alpha_2,\alpha_3)$ and $\beta=(\beta_1,\beta_2,\beta_3)$, write 
\begin{align*}
	\partial^\alpha_\beta = \partial^{\alpha_1}_{x_1}\partial^{\alpha_2}_{x_2}\partial^{\alpha_3}_{x_3}\partial^{\beta_1}_{v_1}\partial^{\beta_2}_{v_2}\partial^{\beta_3}_{v_3}.
\end{align*}The length of $\alpha$ is $|\alpha|=\alpha_1+\alpha_2+\alpha_3$. 
The notation $a\approx b$ (resp. $a\gtrsim b$, $a\lesssim b$) for positive real function $a$, $b$ means there exists $C>0$ not depending on possible free parameters such that $C^{-1}a\le b\le Ca$ (resp. $a\ge C^{-1}b$, $a\le Cb$) on their domain. $\mathscr{S}$ denotes the Schwartz space. $\Re (a)$ means the real part of complex number $a$. $[a,b]=ab-ba$ is the commutator between operators. $\{a(v,\eta),b(v,\eta)\} =  \partial_\eta a_1\partial_va_2 - \partial_va_1\partial_\eta a_2$ is the Poisson bracket. $\Gamma=|dv|^2+|d\eta|^2$ is the admissible metric and $S(m)=S(m,\Gamma)$ is the symbol class. 
For pseudo-differential calculus, we write $(x,v)\in \Rd\times\Rd$ to be the space-velocity variable and $(y,\eta)\in \Rd\times\Rd$ to be the corresponding variable in frequency space (the variable after Fourier transform).

(i) As in \cite{Guo2003a}, the null space of $L$ is given by 
\begin{equation*}
	\ker L  = \text{span}\Big\{(1,0)\mu^{1/2},(0,1)\mu^{1/2},(1,1)v_i\mu^{1/2}(1\le i\le 3),(1,1)|v|^2\mu^{1/2}\Big\}. 
\end{equation*}
We denote $\PP$ to be the orthogonal projection from $L^2_v\times L^2_v$ onto $\ker L$, which is defined by 
\begin{equation}\label{10}
	\P f = \Big(a_+(t,x)(1,0)+a_-(t,x)(0,1)+v\cdot b(t,x)(1,1)+(|v|^2-3)c(t,x)(1,1)\Big)\mu^{1/2},
\end{equation}or equivalently by 
\begin{equation*}
	\PP f = \Big(a_\pm(t,x)+v\cdot b(t,x)+(|v|^2-3)c(t,x)\Big)\mu^{1/2}.
\end{equation*}
Then for given $f$, one can decompose $f$ uniquely as 
\begin{equation*}
	f = \P f+ (\I-\P)f. 
\end{equation*}
The function $a_\pm,b,c$ are given by 
\begin{align*}
	a_\pm &= (\mu^{1/2},f_\pm)_{L^2_v} = (\mu^{1/2},\PP f)_{L^2_v},\\
	b_j&= \frac{1}{2}(v_j\mu^{1/2},f_++f_-)_{L^2_v} = (v_j\mu^{1/2},\PP f)_{L^2_v},\\
	c&=\frac{1}{12}((|v|^2-3)\mu^{1/2},f_++f_-)_{L^2_v} = \frac{1}{6}((|v|^2-3)\mu^{1/2},\PP f)_{L^2_v}. 
\end{align*}

(ii) To describe the behavior of linearized Boltzmann collision operator, \cite{Alexandre2012} introduce the norm $\vertiii{f}$ while \cite{Gressman2011} introduce the norm $N^{s,\gamma}_l$. The previous work \cite{Global2019}\cite{Deng2020} give the pseudo-differential-type norm $\|(\tilde{a}^{1/2})^wf\|_{L^2_v}$. They are all equivalent and we list their results as follows. 

Let $\mathscr{S}'$ be the space of tempered distribution functions. $N^{s,\gamma}$ denotes the weighted geometric fractional Sobolev space 
\begin{align*}
	N^{s,\gamma} = \{f\in\mathscr{S}':|f|_{N^{s,\gamma}}<\infty\},
\end{align*}
with the anisotropic norm 
\begin{align*}
	|f|^2_{N^{s,\gamma}}:&=\|\<v\>^{\gamma/2+s}f\|^2_{L^2}+\int(\<v\>\<v'\>)^{\frac{\gamma+2s+1}{2}}\frac{(f'-f)^2}{d(v,v')^{d+2s}}\1_{d(v,v')\le 1},
\end{align*}with $d(v,v'):=\sqrt{|v-v'|^2+\frac{1}{4}(|v|^2-|v'|^2)^2}$. 
In order to describe the velocity weight $\<v\>$, \cite{Gressman2011} defined 
\begin{align*}
	|f|^2_{N^{s,\gamma}_l} = |w^l\<v\>^{\gamma/2+s}f|^2_{L^2_v}+\int_{\Rd}dv\,w^l\<v\>^{\gamma+2s+1}\int_{\Rd}dv'\,\frac{(f'-f)^2}{d(v,v')^{d+2s}}\1_{d(v,v')\le 1},
\end{align*}which turns out to be equivalent with $|w^lf|_{N^{s,\gamma}}$. This follows from the proof of Proposition 5.1 in \cite{Gressman2011} since the $\psi$ therein has a nice support. 

On the other hand, as in \cite{Alexandre2012}, we define 
\begin{align*}
	\vertiii{f}^2:&=\int B(v-v_*,\sigma)\Big(\mu_*(f'-f)^2+f^2_*((\mu')^{1/2}-\mu^{1/2})^2\Big)\,d\sigma dv_*dv,
\end{align*}

For pseudo-differential calculus as in \cite{Global2019,Deng2020}, one may refer to the appendix as well as \cite{Lerner2010} for more information. Let $\Gamma=|dv|^2+|d\eta|^2$ be an admissible metric. 
Define
\begin{align}\label{11a}
	\tilde{a}(v,\eta):=\<v\>^\gamma(1+|\eta|^2+|\eta\wedge v|^2+|v|^2)^s+K_0\<v\>^{\gamma+2s}
\end{align}to be a $\Gamma$-admissible weight, where $K_0>0$ is chosen as the following. 
Applying theorem 4.2 in \cite{Global2019} and Lemma 2.1 and 2.2 in \cite{Deng2020a}, there exists $K_0>0$ such that the Weyl quantization $\tilde{a}^w:H(\tilde{a}c)\to H(c)$ and $(\tilde{a}^{1/2})^w:H(\tilde{a}^{1/2}c)\to H(c)$ are invertible, with $c$ being any $\Gamma$-admissible metric. The weighted Sobolev space $H(c)$ is defined by \eqref{sobolev_space}. The symbol $\tilde{a}$ is real and gives the formal self-adjointness of Weyl quantization $\tilde{a}^w$. By the invertibility of $(\tilde{a}^{1/2})^w$, we have equivalence 
\begin{align*}
	\|(\tilde{a}^{1/2})^w(\cdot)\|_{L^2_v}\approx\|\cdot\|_{H(\tilde{a}^{1/2})_v},
\end{align*}and hence we will equip $H(\tilde{a}^{1/2})_v$ with norm $\|(\tilde{a}^{1/2})^w(\cdot)\|_{L^2_v}$. Also $\|w^l(\tilde{a}^{1/2})^w(\cdot)\|_{L^2_v}\approx\|(\tilde{a}^{1/2})^ww^l(\cdot)\|_{L^2_v}$ due to Lemma \ref{inverse_bounded_lemma}. 
Notice that $\|\cdot\|_{L^2_v}\lesssim \|(\tilde{a}^{1/2})^w(\cdot)\|_{L^2_v}$ for hard potential $\gamma+2s\ge 0$ and we will use this property in our proof. 

The three norms above are equivalent since for $l\in\R$, 
\begin{align*}
	\|(\tilde{a}^{1/2})^wf\|^2_{L^2_v}\approx\vertiii{f}^2\approx|f|^2_{N^{s,\gamma}}\approx (-Lf,f)_{L^2_v}+\|\<v\>^lf\|_{L^2_v},
\end{align*}which follows from (2.13)(2.15) in \cite{Gressman2011}, Proposition 2.1 in \cite{Alexandre2012} and Theorem 1.2 in \cite{Global2019}. An important result from \cite{Deng2020a} is that 
\begin{align*}
	L\in S(\tilde{a}),
\end{align*}where $S(\tilde{a})=S(\tilde{a},\Gamma)$ is the pseudo-differential symbol class; see \cite{Lerner2010}. This implies that 
\begin{align*}
	|(Lf,f)_{L^2_v}|\lesssim \|(\tilde{a}^{1/2})^wf\|_{L^2}^2. 
\end{align*}

The normal $L^2_{v,x}$ is defined as $L^2_{v,x}=L^2(\R^3_v\times\R^3_x)$. $L^2(B_C)$ is the $L^2_v$ space on Euclidean ball $B_C$ of radius $C$ at the origin. For usual Sobolev space, we will use notation 
\begin{align*}
	\|f\|_{H^k_vH^m_x} = \sum_{|\beta|\le k,|\alpha|\le m}\|\partial^\alpha_\beta f\|_{L^2_{v,x}},
\end{align*}for $k,m\ge 0$. 
We also define the standard velocity-space mixed Lebesgue space $Z_1=L^2(\R^3_v;L^1(\R^3_x))$ with the norm
\begin{equation*}
	\|f\|_{Z_1} = \Big\|\|f\|_{L^1_x}\Big\|_{L^2_v}.
\end{equation*}
In this paper, we write Fourier transform on $x$ as 
\begin{align*}
	\widehat{f}(y) = \int_{\R^3}f(x)e^{-ix\cdot y}\,dx. 
\end{align*}

\paragraph{Main results.}
To state the result of the paper, we let $K\ge 0$ to be the total order of derivatives on $v,x$ and define the velocity weight function $w^l$ for any $l\ge 0$ by \begin{align*}
	w^l = \<v\>^l.
\end{align*}
In order to extract the smoothing effect, we define a useful coefficient  
\begin{equation*}
\psi_k=\left\{\begin{aligned}
	1, \text{  if $k\le 0$},\\
	\psi^k, \text{ if $k> 0$}, 
\end{aligned}\right.
\end{equation*}
where $\psi = 1$ in Section \ref{sec4} and Theorem \ref{main1} and $\psi = t^N$ with $N>0$ large in Section \ref{sec5} and Theorem \ref{main2}. When the second case $\psi=t^N$ arise, we assume $0\le t\le 1$, since the regularity is local property. We will carry $\psi$ in our calculation for brevity of proving the smoothing effect. 
Corresponding to given $f=f(t,x,v)$, we introduce the instant energy functional $\E_{K,l}(t)$ and the instant high-order energy functional $\E^{h}_{K,l}(t)$ to be functionals satisfying the equivalent relations 
\begin{align}\label{Defe}
	\E_{K,l}(t)\notag &\approx \sum_{|\alpha|\le K}\|\psi_{|\alpha|-2}\partial^\alpha E(t)\|^2_{L^2_x}+\sum_{|\alpha|\le K}\|\psi_{|\alpha|-2}\partial^\alpha\P f\|^2_{L^2_{v,x}}+\sum_{|\alpha|\le K}\|\psi_{|\alpha|-2}\partial^{\alpha}(\I-\P)f\|^2_{L^2_{v,x}}\\
	&\qquad+\sum_{\substack{|\alpha|+|\beta|\le K}}\|\psi_{|\alpha|+|\beta|-2}w^{l-|\alpha|-|\beta|}\partial^\alpha_\beta(\I-\P) f\|^2_{L^2_{v,x}}.
\end{align}
\begin{align}\label{Defeh}
	\E^h_{K,l}(t)\notag &\approx \sum_{|\alpha|\le K}\|\psi_{|\alpha|-2}\partial^\alpha E(t)\|^2_{L^2_x}+\sum_{1\le|\alpha|\le K}\|\psi_{|\alpha|-2}\partial^\alpha\P f\|^2_{L^2_{v,x}}+\sum_{|\alpha|\le K}\|\psi_{|\alpha|-2}\partial^{\alpha}(\I-\P)f\|^2_{L^2_{v,x}}\\
	&\qquad+\sum_{\substack{|\alpha|+|\beta|\le K}}\|\psi_{|\alpha|+|\beta|-2}w^{l-|\alpha|-|\beta|}\partial^\alpha_\beta(\I-\P) f\|^2_{L^2_{v,x}}.
\end{align}
Also, we define the dissipation rate functional $\D_{K,l}$ by 
\begin{align}\label{Defd}
	\D_{K,l}(t) \notag&= \sum_{|\alpha|\le K-1}\|\psi_{|\alpha|-2}\partial^\alpha E(t)\|^2_{L^2_x}+\sum_{1\le|\alpha|\le K}\|\psi_{|\alpha|-2}\partial^\alpha\P f\|^2_{L^2_{v,x}}+\sum_{|\alpha|\le K}\|\psi_{|\alpha|-2}(\tilde{a}^{1/2})^w\partial^{\alpha}(\I-\P)f\|^2_{L^2_{v,x}}\\
	&\qquad+\sum_{\substack{|\alpha|+|\beta|\le K}}\|\psi_{|\alpha|+|\beta|-2}(\tilde{a}^{1/2})^ww^{l-|\alpha|-|\beta|}\partial^\alpha_\beta(\I-\P) f\|^2_{L^2_{v,x}}.
\end{align}
Here $E=E(t,x)$ is determined by $f(t,x,v)$ in terms of $E=-\nabla_x\phi$ and \eqref{8}. Notice that one can change the order of $(\tilde{a}^{1/2})^w$ and $w^{l-|\alpha|-|\beta|}$ due to Lemma \ref{inverse_bounded_lemma}. 
 The main results of this paper are stated as follows. 

\begin{Thm}\label{main1}
	Let $\gamma+2s\ge 0$, $0<s<1$. Define $i=1$ if $0<s<\frac{1}{2}$ and $i=2$ if $\frac{1}{2}\le s<1$. Let $K\ge i+1$, $l\ge \max\{K,\gamma/2+s+2,\gamma+2s+i+1\}$ and  $f_0(x,v)=(f_{0,+}(x,v),f_{0,-}(x,v))$ satisfying $F_\pm(0,x,v)=\mu(v)+(\mu(v))^{1/2}f_{0,\pm}(x,v)\ge 0$. Assume $\psi=1$. 
	If 
	\begin{align*}
		\epsilon_0 = (\E_{K,l}(0))^{1/2}+\|f_0\|_{Z_1}+\|E_0\|_{L^1_x},
	\end{align*}
is sufficiently small, where $E_0(x)=E(0,x)$, then there exists a unique global solution $f(t,x,v)$ to the Cauchy problem \eqref{7}-\eqref{9} of the Vlasov-Poisson-Boltzmann system such that $F_\pm(t,x,v)=\mu(v)+(\mu(v))^{1/2}f_\pm(t,x,v)\ge 0$ and 
\begin{equation}\label{15a}\begin{aligned}
		\E_{K,l}(t)&\lesssim \epsilon_0^2(1+t)^{-\frac{3}{2}},\\
		\E^h_{K,l}(t)&\lesssim \epsilon_0^2(1+t)^{-\frac{5}{2}},
	\end{aligned}
\end{equation}
for any $t\ge 0$. 
\end{Thm}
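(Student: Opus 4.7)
The plan is to combine a time-weighted energy-dissipation analysis with the pseudo-differential calculus for the linearized non-cutoff operator $L$, following the framework developed in \cite{Deng2020a,Deng2020} for the collision operator together with the two-species VPB macroscopic analysis of \cite{Duan2011}. The core of the argument is a closed \emph{a priori} estimate of the form
\begin{equation*}
\frac{d}{dt}\E_{K,l}(t) + \lambda \D_{K,l}(t) \leq C \sqrt{\E_{K,l}(t)}\, \D_{K,l}(t),
\end{equation*}
which under smallness of $\E_{K,l}(t)$ gives $\frac{d}{dt}\E_{K,l}(t) + \frac{\lambda}{2}\D_{K,l}(t) \leq 0$. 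Combined with a local existence result obtained by a standard approximation scheme on \eqref{7}, this produces a unique global solution, and nonnegativity of $F_\pm$ is inherited from the approximating sequence.

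\textbf{Energy-dissipation inequality.} For each $|\alpha|\le K$ I would apply $\partial^\alpha$ to \eqref{7} and pair with $\partial^\alpha f$. The coercivity $-(L g,g)_{L^2_v} \gtrsim \|(\tilde{a}^{1/2})^w g\|_{L^2_v}^2$ on $(\I-\P)g$ produces the microscopic dissipation $\sum_{|\alpha|\le K}\|(\tilde{a}^{1/2})^w \partial^\alpha(\I-\P)f\|_{L^2_{v,x}}^2$. The electric-field term $\pm \nabla_x\phi\cdot v\mu^{1/2}$ is handled via \eqref{8} and an integration by parts, contributing the $E$-dissipation. The macroscopic dissipation on $(a_\pm,b,c)$ for $|\alpha|\ge 1$ is obtained through Guo-type modified test functions adapted to the coupled two-species Poisson system, as in \cite{Duan2011}. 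For the weighted mixed derivatives, I would apply $\partial^\alpha_\beta$ to the equation for $(\I-\P)f$ and test with $w^{2(l-|\alpha|-|\beta|)}\partial^\alpha_\beta(\I-\P)f$; each $\partial_\beta$ hitting the transport term $v\cdot\nabla_x$ loses one power of $\<v\>$, which is exactly the weight reduction built into $\D_{K,l}$. The nonlinear contribution $\Gamma_\pm(f,f)$ is controlled by trilinear estimates in the $N^{s,\gamma}$-norm (equivalent to the pseudo-differential norm via the identifications listed in the introduction), placing one factor in $L^\infty_x$ through Sobolev embedding so that it is bounded by $\sqrt{\E_{K,l}}$, giving the quadratic remainder above.

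\textbf{Algebraic decay.} To upgrade the dissipation inequality to the rates \eqref{15a} I would adopt the $Z_1$-method. Introduce
\begin{equation*}
\mathcal{X}(T) = \sup_{0\le t\le T}\Bigl[(1+t)^{3/2}\E_{K,l}(t) + (1+t)^{5/2}\E^h_{K,l}(t)\Bigr],
\end{equation*}
and first propagate $\|f(t)\|_{Z_1}+\|E(t)\|_{L^1_x}\lesssim \epsilon_0$ uniformly in $t$ via the Duhamel representation, using the bilinear estimate on $\Gamma$ in $Z_1$. Fourier analysis in $x$ of the linearized system then yields the low-frequency decay $\|e^{tB}U_0\|_{L^2_x}\lesssim (1+t)^{-3/4}(\|U_0\|_{Z_1}+\|E_0\|_{L^1_x})$, with an extra factor $(1+t)^{-1/2}$ when at least one $x$-derivative falls on $\P f$ (which is what produces the $(1+t)^{-5/2}$ rate of $\E^h_{K,l}$). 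Splitting the Duhamel integral at $t/2$, using the energy-dissipation inequality on $[0,t/2]$ and the linear decay on $[t/2,t]$, together with the elementary bound $\int_0^t (1+t-s)^{-3/2}(1+s)^{-3/2}\,ds \lesssim (1+t)^{-3/2}$, yields $\mathcal{X}(T)\lesssim \epsilon_0^2 + \mathcal{X}(T)^2$, and smallness closes the bootstrap.

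\textbf{Main obstacle.} The most delicate step is reconciling the pseudo-differential coercivity of $L$ with the Poisson coupling while preserving the whole family of weights in $\E_{K,l}$ and $\D_{K,l}$. Commuting $(\tilde{a}^{1/2})^w$, $w^{l-|\alpha|-|\beta|}$, $\partial^\alpha_\beta$ and the transport/electric operator $v\cdot\nabla_x \pm \frac{1}{2}E\cdot v \mp E\cdot\nabla_v$ generates many remainders of order $\tilde{a}^{1/2}$; one must verify, using the $\Gamma$-admissible symbolic calculus together with Lemma \ref{inverse_bounded_lemma}, that each such commutator can be absorbed into either $\D_{K,l}$ or $\sqrt{\E_{K,l}}\,\D_{K,l}$. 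The second challenge is the low-frequency spectral analysis for the two-species VPB system: the Poisson interaction modifies the dispersion as $|y|\to 0$ relative to pure Boltzmann, and one must check that the $(1+t)^{-3/4}$ semigroup rate survives in the non-cutoff hard-potential regime, since the fractional character of $L$ enters the resolvent expansion around $y=0$.
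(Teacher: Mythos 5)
Your global plan---time-weighted energy--dissipation estimates, Duhamel iteration with the $Z_1$ norm, spectral decay for the linearized semigroup, and a bootstrap---matches the broad architecture of the paper's argument (Theorems \ref{thm41}, \ref{thm42}, \ref{homogen}, and the $X(t)$ bootstrap). However, there is a concrete gap in the form of the closed energy estimate you assert. You claim
\begin{equation*}
\frac{d}{dt}\E_{K,l}(t)+\lambda\D_{K,l}(t)\le C\sqrt{\E_{K,l}(t)}\,\D_{K,l}(t),
\end{equation*}
which would close by smallness of $\E_{K,l}$ alone. This is not available for the VPB system. The term $\pm\frac12\nabla_x\phi\cdot v\,f_\pm$ in \eqref{7} carries an extra power of $\<v\>$ that is not absorbed by the dissipation when $0\le\gamma+2s<1$; the paper neutralizes it by pairing $\partial^\alpha f_\pm$ against the exponential weight $e^{\pm\phi}$, so that the $\alpha_1=0$ piece cancels exactly against the transport contribution after integration by parts in $x$ (see \eqref{48aa}). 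The price of this cancellation is the time derivative of the weight, which produces the genuinely non-absorbable remainder $\|\partial_t\phi\|_{L^\infty_x}\E_{K,l}(t)$ on the right-hand side of \eqref{42a}. Since $\D_{K,l}$ does not control $\|\P f\|_{L^2_{v,x}}^2$ nor $\|\partial^K E\|_{L^2_x}^2$, this term cannot be folded into the dissipation; the paper instead observes that $\|\partial_t\phi\|_{L^\infty_x}\lesssim(\E^h_{K,l})^{1/2}\lesssim(1+t)^{-5/4}$ is integrable under the \emph{a priori} bound and closes by Gronwall (\eqref{81a}). Your proposal does not introduce the $e^{\pm\phi}$ weight, does not account for the resulting $\|\partial_t\phi\|_{L^\infty_x}\E_{K,l}$ term, and so the step from the energy inequality to uniform boundedness of $\E_{K,l}$ is broken. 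A smaller discrepancy: the resulting bootstrap inequality has an $X^{3/2}$ contribution, i.e.\ $X(t)\lesssim\epsilon_0^2+X^{3/2}(t)+X^2(t)$, rather than the purely quadratic $\mathcal{X}\lesssim\epsilon_0^2+\mathcal{X}^2$ you state; this is harmless for the final smallness conclusion but reflects the same missing term.
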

This gives the global existence to the Vlasov-Poisson-Boltzmann system with the optimal large time decay as in \cite{Duan2011}, where Duan and Strain discover the optimal large time decay for Vlasov-Maxwell-Boltzmann system. Notice that we only require $K\ge i+1$, which improve the index $K\ge 8$ in \cite{Duan2013}. 
In order to define the $a$ $priori$ assumption, for $0<T\le\infty$ and $t\in[0,T]$, we define the time-weighted energy norm $X(t)$ by 
\begin{align*}
	X(t) = \sup_{0\le\tau\le t}(1+\tau)^{3/2}\E_{K,l}(\tau)+\sup_{0\le\tau\le t}(1+\tau)^{5/2}\E^h_{K,l}(\tau).
\end{align*}
Here the high-order energy functional $\E^h_{K,l}$ has time decay rate $(1+t)^{-5/2}$ while $\E_{K,l}$ has time decay rate $(1+t)^{-3/2}$. They are all optimal as in the Boltzmann equation case \cite{Strain2012} and the Vlasov-Maxwell-Boltzmann system case \cite{Duan2011}. 
Let $\delta_0>0$ and the $a$ $priori$ assumption to be 
\begin{align}
	\label{priori}\sup_{0\le t\le T}X(t)\le \delta_0. 
\end{align}
Then we will obtain the following closed $a$ $priori$ estimate
\begin{align*}
	X(t)\lesssim \epsilon^2_0+X^{3/2}(t)+X^2(t).
\end{align*}

In order to extract the smoothing effect on $x$, we define a symbol $\tilde{b}$ by 
\begin{align}\label{11}
	\tilde{b}(v,y) = (1+|v|^2+|y|^2+|v\wedge y|^2)^{\delta_1}, 
\end{align}
where $\delta_1>0$ is defined by \eqref{107} and \eqref{108}. Notice that we will require $\gamma+2s>0$ here and in the next main result. 
\begin{Thm}\label{main2}Let $\gamma+2s>0$, $0<\tau<T\le \infty$.  
For any $l\ge K\ge 3$, assume $\psi=t^N$ with $N>0$ large and $l\ge K$. Let $f$ to be the solution to \eqref{7}-\eqref{9} satisfying that
	\begin{align}
		\epsilon_1 = (\E_{i+1,l}(0))^{1/2}+\|f_0\|_{Z_1}+\|E_0\|_{L^1_x} <\infty
	\end{align}is sufficiently small, where $i=1$ if $0<s<\frac{1}{2}$ and $i=2$ if $\frac{1}{2}\le s<1$. Then for $|\alpha|+|\beta|\le K$, 
\begin{align}\label{19a}
	\sup_{\tau\le t\le T}\|w^{l-|\alpha|-|\beta|}\partial^\alpha_\beta f\|^2_{L^2_{v,x}}\le C_\tau\epsilon^2_1<\infty,
\end{align}
where $C_\tau>0$ depends on $\tau$. Moreover, if additionally  
\begin{align*}
	\sup_{l_0\ge i+1}\E_{i+1,l_0}(0)<\infty, 
\end{align*}is sufficiently small, then for $|\alpha|+|\beta|\le K\le l$, $k\ge 0$,  
	\begin{align}\label{19b}
		\sup_{\tau\le t\le T}\|w^{l-|\alpha|-|\beta|}\partial^\alpha_\beta\partial^{k}_tf\|^2_{L^2_{v,x}}\le C_{\tau,k}<\infty,
	\end{align}where $C_{\tau,k}$ is a constant depending on $\tau$, $k$. 
Consequently, $f\in C^\infty(\R^+_t;C^\infty(\R^3_x;\mathscr{S}(\R^3_v)))$. 
\end{Thm}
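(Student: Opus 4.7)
The plan is a time-weighted energy induction on the total derivative order $|\alpha|+|\beta|$, driven by the dissipation in $\D_{K,l}$ from the linearized collision operator $L$, with the time weight $\psi=t^N$ absorbing the singular behaviour as $t\downarrow 0$ and reflecting the purely local nature of the smoothing. The base case of \eqref{19a} is provided by Theorem \ref{main1} applied with initial regularity order $i+1$: the global energy $\E_{i+1,l}(t)$ already controls the required norms when $|\alpha|+|\beta|\le i+1$, and one climbs up one derivative order at a time until reaching $K$.

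For the inductive step, I would apply $\partial^\alpha_\beta$ to the reformulated system \eqref{7} and take the $L^2_{v,x}$ pairing with $\psi^{2k}w^{2(l-k)}\partial^\alpha_\beta f$ for $k=|\alpha|+|\beta|$. The linearized operator yields the dissipation $\|\psi_{k-2}(\tilde{a}^{1/2})^w w^{l-k}\partial^\alpha_\beta(\I-\P)f\|^2_{L^2_{v,x}}$ appearing in $\D_{K,l}$, which, since $\gamma+2s>0$, strictly dominates the weighted $L^2$-norm and produces fractional velocity regularity of order $s$ via the symbolic inclusion $L\in S(\tilde{a})$. The transport term $v\cdot\nabla_x\partial^\alpha_\beta f$ generates a commutator $[\partial_\beta,v\cdot\nabla_x]$ that trades a velocity derivative for a spatial one, and this coupling is captured by inserting the symbol $\tilde{b}$ from \eqref{11}, whose $|v\wedge y|^2$ piece encodes exactly the hypoelliptic gain expected from transport plus velocity dissipation. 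The nonlinear Vlasov pieces $\pm\tfrac12\nabla_x\phi\cdot v f_\pm$ and $\mp\nabla_x\phi\cdot\nabla_v f_\pm$, together with $\Gamma_\pm(f,f)$, are handled by the bilinear pseudo-differential estimates already used for Theorem \ref{main1} and absorbed by the smallness of $\epsilon_1$; the Poisson coupling \eqref{8} forces $\|\partial^\alpha E\|_{L^2_x}$ to sit at the same level in the energy, a structure already built into \eqref{Defe}.

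The main obstacle is the propagation of spatial regularity: the collision dissipation by itself supplies only velocity smoothing, and spatial smoothing has to be extracted through the transport operator in a H\"ormander-hypoelliptic/velocity-averaging manner, complicated here by the nonlocal nonlinear Poisson coupling. This is precisely the purpose of $\tilde{b}$; by commuting $\tilde{b}^w$ through $\partial_t+v\cdot\nabla_x$ and $\tilde{a}^w$ using the symbolic calculus from \cite{Lerner2010} and controlling the resulting remainders by $\D_{K,l}$, one should obtain a gain of order $\delta_1$ in $x$ per inductive step, at the price of the time factor $\psi^{2k}$ that kills the blow-up when $t\downarrow 0$. For the time-derivative bound \eqref{19b} I would run a second induction on $k$: differentiating \eqref{7} in $t$ expresses $\partial_t^{k+1}f$ as a sum of transport, Poisson, and collision contributions involving only $(x,v)$-derivatives of order $\le k+1$ of lower $t$-order, hence already controlled by the first step. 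Iterating and exploiting the uniform-in-$l_0$ smallness of $\E_{i+1,l_0}(0)$ produces arbitrary weighted velocity regularity together with $C^\infty$ regularity in $(t,x)$, which, via Sobolev embedding, yields $f\in C^\infty(\R^+_t;C^\infty(\R^3_x;\mathscr{S}(\R^3_v)))$.
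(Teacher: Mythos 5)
Your high-level plan --- time-weighted energy estimates with $\psi=t^N$, the auxiliary symbol $\tilde{b}$, and a hypoelliptic gain from transport combined with the collisional dissipation --- is aligned with the paper's. But the specific role you assign to $\tilde{b}$ is incorrect, and the missing mechanism is the crux of the proof. You claim $\tilde{b}$ is inserted to capture the commutator $[\partial_\beta,v\cdot\nabla_x]$; in fact that commutator gives the benign lower-order term $\sum_{\beta_1\le\beta}\partial_{\beta_1}v_i\,\partial^{e_i+\alpha}_{\beta-\beta_1}f$, which is handled directly inside the energy estimate and contributes to $\D_{K,l}$. The genuine obstruction introduced by $\psi=t^N$ is that $\partial_t\E_{K,l}$ spits out the error $\partial_t(\psi_{|\alpha|+|\beta|-2})\,\partial^\alpha_\beta f$, proportional to $\psi_{|\alpha|+|\beta|-2-1/N}\,\partial^\alpha_\beta f$, which carries a \emph{smaller} power of $t$ in front of a derivative of the \emph{same} total order $|\alpha|+|\beta|$, and hence cannot be absorbed by $\E_{K,l}$, by $\D_{K,l}$, or by your inductive hypothesis at level $|\alpha|+|\beta|-1$. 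The step you are missing is the Young-inequality trade
\begin{equation*}
\psi_{|\alpha|+|\beta|-2-\frac{1}{2N}}\lesssim \delta\,\tilde{b}^{1/2}\,\psi_{|\alpha|+|\beta|-2}+C_\delta\<v\>^{-N_0}\<y\>^{-N_0},
\end{equation*}
which exchanges the deficit in the power of $t$ for a gain of $\delta_1$-regularity; only after this exchange does the Poisson-bracket identity $\tilde{b}=\{\theta,v\cdot y\}+R$, with $\theta\in S(1)$ and $R\in S(\tilde{a})$, enter, converting the $(\tilde{b}^{1/2})^w$-norm into a pairing with $v\cdot\nabla_x$, and then equation \eqref{7} is used to replace $v\cdot\nabla_x f$ by $\partial_t$, Vlasov, and collision contributions, all of which are controllable. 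Note also that the paper does \emph{not} induct on $|\alpha|+|\beta|$: it closes the full differential inequality for $\E_{K,l}$ at once, exploiting that $\psi_{|\alpha|+|\beta|-2}$ vanishes at $t=0$ for $|\alpha|+|\beta|\ge 3$, so that $\E_{K,l}(0)=\E_{2,l}(0)$ automatically. Your proposed induction could in principle be organised to work, but only after incorporating the Young-inequality step --- as written, the $\partial_t\psi$ error is not even acknowledged. Your second induction on $\partial_t^k$ for \eqref{19b} does match the paper's treatment.
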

This result is similar to the Boltzmann equation case; see \cite{Alexandre2010}. That is, whenever the initial data has exponential decay, the solution $f$ is Schwartz in $v$ and smooth in $(t,x)$ for any positive time $t$. 

In what follows let us point out several technical points in the proof of Theorem \ref{main1} and \ref{main2}. For Theorem \ref{main1}, firstly, we use $K\ge 2$ because of $H^2_x(\R^3)$ is a Banach algebra when controlling \eqref{12} and it's useful when dealing with the trilinear estimate. Secondly, the velocity weight $w^{l-|\alpha|-|\beta|}$ will help us deal with the term $v\cdot\nabla_x\phi f$ when bounding 
\begin{align*}
	\big(\sum_{\alpha_1\le\alpha}v\cdot\nabla_x\partial^{\alpha_1}\phi \partial^{\alpha-\alpha_1} f,e^{\pm\phi}w^{2l-2|\alpha|-2|\beta|}\partial^\alpha f\big)_{L^2_{v,x}},
\end{align*}where $|\alpha|\le K$. The case $\alpha_1=0$ will be eliminated by the similar term corresponding to $v\cdot\nabla_xf$ as in \cite{Guo2012}. This is what $e^{\pm\phi}$ designed for. The case $\alpha_1\neq 0$ can be bounded due to the weight $w^{l-|\alpha|-|\beta|}$. 
For the term $\nabla_x\phi\cdot\nabla_vf$, one will need to bound 
\begin{align*}
	\big(\sum_{\alpha_1\le\alpha}\nabla_x\partial^{\alpha_1}\phi \partial^{\alpha-\alpha_1}\nabla_v f,e^{\pm\phi}w^{2l-2|\alpha|-2|\beta|}\partial^\alpha f\big)_{L^2_{v,x}}.
\end{align*}
This term will transfer one derivative from $x$ to one derivative on $v$ and so one should require $|\alpha|+|\beta|\le K$. If $\alpha_1=0$, we can use integration by parts to move $\nabla_v$ to the weight $w^{2l-2|\alpha|-2|\beta|}$, while if $\alpha_1\neq 0$, the total order on the first $f$ is less or equal to $K$ and hence can be control by our energy functional $\E_{K,l}$ or $\D_{K,l}$. As in \cite{Duan2013}, one has to bound the term 
\begin{align*}
	\|\partial_t\phi\|_{L^\infty_x}\E_{K,l},
\end{align*}which cannot be absorbed by the energy dissipation norm. But observing that $\|\partial_t\phi\|_{L^\infty_x}$ is bounded by the high-order energy functional $\E^h_{K,l}$ and hence integrable as shown in \cite{Guo2012}, one can use the Gronwall's inequality to close the $a$ $priori$ estimate. 

The second technical point concerns the choice of $\psi=t^N$ in Theorem \ref{main2} and the usage of $\psi_{|\alpha|+|\beta|-2}$ is Section \ref{sec5}. Firstly, whenever $|\alpha|+|\beta|\ge 2$, $\psi_{|\alpha|+|\beta|-2} = t^{N(|\alpha|+|\beta|-2)}$ is equal to $0$ at $t=0$. Plugging this into energy estimate, one can easily deduce the smoothing effect locally in time, since the initial data becomes zero. By using the global energy control obtained in Theorem \ref{main1}, the local regularity becomes global-in-time regularity. Notice that we use $-2$ to eliminate the index arising from Sobolev embedding $\|\cdot\|_{L^\infty_x}\lesssim \|\cdot\|_{H^2_x}$. However, after adding the term $\psi_{|\alpha|+|\beta|-2}$, one need to deal with the term 
\begin{align*}
	\big(\partial_t(\psi_{|\alpha|+|\beta|-2})\partial^\alpha_\beta f,e^{\pm\phi}\partial^\alpha_\beta f\big)_{L^2_{v,x}}.
\end{align*}
Using the symbols \eqref{11a} and \eqref{11}, we can control this term by pseudo-differential norms with a little higher-order, where these pseudo-differential norms can be controlled by the functional $\E_{K,l}$ and $\D_{K,l}$. Hence, we can obtain a closed energy estimate locally. Together with the global energy control in Theorem \ref{main1}, one can deduce the regularity for any positive time $t>0$.

The rest of the paper is arranged as follows. In Section 2, we present some basic estimates for $L, \Gamma$, and tricks in energy estimates. In Section 3, we list the macroscopic energy estimates. In Section 4, we use the $a$ $priori$ estimate to perform proof of existence. In Section 5, we present the proof for regularity.

\section{Preliminaries}

In this section, we list several basic lemmas corresponding to the linearized Boltzmann collision term $L_\pm$ and the bilinear Boltzmann collision operator $\Gamma_\pm$. Recall $w^l = \<v\>^l$. The following lemma concerns with dissipation of $L_\pm$, whose proof can be found in \cite[Lemma 2.6 and Theorem 8.1]{Gressman2011}.

\begin{Lem}\label{lemmaL}For any $l\in\R$, multi-indices $\alpha,\beta$, we have the followings. 
	
	(i) It holds that \begin{equation*}
		(-Lg,g)_{L^2_v}\gtrsim \|(\tilde{a}^{1/2})^w(\I-\P)g\|^2_{L^2_v}.
	\end{equation*}

(ii) There exists $C>0$ such that 
\begin{align*}
	-(w^{2l}Lg,g)_{L^2_v}\gtrsim \|(\tilde{a}^{1/2})^ww^lg\|^2_{L^2_v}-C\|g\|^2_{L^2_v(B_C)}.
\end{align*}

(iii) For any $\eta>0$, 
\begin{align*}
	-(w^{2l-2|\alpha|-2|\beta|}\partial^\alpha_\beta Lg,\partial^\alpha_\beta g)_{L^2_v}&\gtrsim \|(\tilde{a}^{1/2})^ww^{l-|\alpha|-|\beta|}\partial^\alpha_\beta g\|^2_{L^2_v} - \eta\sum_{|\beta_1|\le|\beta|}\|(\tilde{a}^{1/2})^ww^{l-|\alpha|-|\beta_1|}\partial^\alpha_{\beta_1}g\|^2_{L^2_v}\\
	&\qquad\qquad\qquad\qquad\qquad\qquad\qquad\qquad-C_\eta\|\partial^\alpha g\|^2_{L^2(B_{C_\eta})}.
\end{align*}

\end{Lem}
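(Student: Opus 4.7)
The plan is to deduce all three statements from the coercivity theory of the linearized non-cutoff Boltzmann operator developed in Gressman--Strain, combined with commutator estimates that carry the weights and derivatives through the pseudo-differential calculus already set up in the introduction.

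For (i), the starting point is the spectral-gap property of $-L$ on the orthogonal complement of $\ker L$: there exists $\lambda>0$ with $(-Lg,g)_{L^2_v}\ge \lambda |(\I-\P)g|^2_{N^{s,\gamma}}$. Using the equivalence $|h|_{N^{s,\gamma}}\approx \|(\tilde a^{1/2})^w h\|_{L^2_v}$ already recorded in the introduction, the inequality in (i) follows immediately. The easy direction is that $L\P=0$, so the integrand only sees $(\I-\P)g$.

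For (ii), I would write
\begin{equation*}
-(w^{2l}Lg,g)_{L^2_v}=-(L(w^l g),w^l g)_{L^2_v}+([w^l,L]g,w^l g)_{L^2_v},
\end{equation*}
and apply (i) to the first term with $w^l g$ in place of $g$. The deficit between $(\I-\P)(w^l g)$ and $w^l g$ itself is a finite-rank correction in the variables $\{1, v_i, |v|^2\}\mu^{1/2}$, hence dominated by $\|g\|^2_{L^2_v(B_C)}$ after localization. For the commutator, I would use that $w^l\in S(\<v\>^l)$ and $L\in S(\tilde a)$, so that $[w^l,L]\in S(\<v\>^{l-1}\,\tilde a)$ by the Weyl calculus; for $|v|\ge C_0$ this is strictly smaller-order than $\tilde a\,\<v\>^{2l}$, and after choosing $C_0$ sufficiently large it is absorbed by $\varepsilon\|(\tilde a^{1/2})^w w^l g\|^2_{L^2_v}$, leaving only a compactly supported residue controlled by $\|g\|^2_{L^2(B_C)}$.

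For (iii), the strategy is to apply $\partial^\alpha_\beta$, write $\partial^\alpha_\beta L g=L\partial^\alpha_\beta g+[\partial^\alpha_\beta,L]g$, test against $w^{2l-2|\alpha|-2|\beta|}\partial^\alpha_\beta g$, and use (ii) on the main term to obtain the leading coercive expression. The commutator expands via Leibniz into a finite sum of operators of the form $\T_{\beta'}(\cdot,\cdot)$ or similar bilinear pieces acting on $\partial^\alpha_{\beta_1}g$ with $|\beta_1|<|\beta|$. Each such piece is estimated by the $S(\tilde a)$ calculus as a bilinear operator of the same type as $L$ but losing one power of $v$-derivative, so its contribution is bounded by a constant times $\|(\tilde a^{1/2})^w w^{l-|\alpha|-|\beta_1|}\partial^\alpha_{\beta_1}g\|^2_{L^2_v}$, which is precisely the pattern absorbed by the $\eta$-sum on the right-hand side. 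The exponent $l-|\alpha|-|\beta|$ is chosen exactly so that each commutator term receives a compatible weight.

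The main technical obstacle is the commutator analysis in (iii): one has to keep careful track of how $v$-derivatives falling onto the collision kernel are compensated by the weight $w^{-|\beta|}$, and ensure the resulting bounds match the right-hand side exactly. Bypassing explicit kernel manipulation in favor of the symbolic statement $L\in S(\tilde a)$ together with the Weyl calculus is the cleanest route, and it is this pseudo-differential formulation that makes the weighted derivative estimates in (ii) and (iii) essentially immediate corollaries of (i).
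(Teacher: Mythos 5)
The paper does not actually prove Lemma~\ref{lemmaL}; it simply cites Gressman--Strain (Lemma 2.6 and Theorem 8.1 of that paper), where the weighted and derivative estimates are established by explicit kernel decompositions, trilinear estimates, and cancellation lemmas. Your plan is therefore a genuinely different route: you try to replace the explicit collision-kernel analysis by the symbolic fact $L\in S(\tilde a)$ together with Weyl-calculus commutator bounds. That is an attractive shortcut and your symbol-class bookkeeping for $[w^l,L]$ is essentially right (the leading Poisson bracket $-\partial_v w^l\cdot\partial_\eta L$ drops one power of $\<v\>$), but the proposal as written has a concrete gap at the absorption step, which is precisely where the non-trivial work lives.

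Here is the issue. After writing $-(w^{2l}Lg,g)=-(L(w^lg),w^lg)-([w^l,L]g,w^lg)$ (note your sign is flipped, though that is immaterial for an upper bound on the error term) and bounding $[w^l,L]\in S(\<v\>^{l-1}\tilde a)$, the resulting error is controlled by a quantity of the type $\|(\tilde a^{1/2}\<v\>^{l-1/2})^w g\|^2_{L^2_v}$, which still carries the full anisotropic $\eta$-order of $\tilde a^{1/2}$. You assert this leaves ``only a compactly supported residue controlled by $\|g\|^2_{L^2(B_C)}$,'' but a $v$-cutoff alone cannot strip off the $\eta$-content: localizing $\<v\>^{l-1/2}\le \varepsilon\<v\>^l+C_\varepsilon\1_{|v|\le R}$ still leaves a term of order $\tilde a^{1/2}\1_{|v|\le R}$, i.e.\ an $H^s_v$-type norm on the ball, not $L^2(B_C)$. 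Turning that into $\|g\|^2_{L^2(B_C)}$ requires a genuine weighted interpolation inequality for the anisotropic norm (of the type $|w^{l'}g|^2_{N^{s,\gamma}}\le \varepsilon|w^l g|^2_{N^{s,\gamma}}+C_\varepsilon\|g\|^2_{L^2(B_C)}$ for $l'<l$), which is one of the main lemmas of Gressman--Strain and is not supplied or cited in your argument. The same omission recurs in (iii) in two places: first, you do not explain why the commutator terms can be absorbed with an \emph{arbitrarily small} coefficient $\eta$ (the $\eta$-smallness comes from exactly this interpolation, not from the symbol class alone); second, applying (ii) to $\partial^\alpha_\beta g$ produces a residue $\|\partial^\alpha_\beta g\|^2_{L^2(B_C)}$ rather than the stated $\|\partial^\alpha g\|^2_{L^2(B_{C_\eta})}$, and removing the $v$-derivatives from the localized term again needs a Sobolev interpolation that you do not address.
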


The next lemmas concern the estimates on the nonlinear collision operator $\Gamma_\pm$. 
We will use the estimate in \cite[Lemma 2.2]{Duan2013} and the estimate from \cite[Proposition 3.1]{Strain2012}.

\begin{Lem}\label{Lem22}
	For any $ l\ge 0$, $m\ge 0$ and multi-index $\beta$, we have the upper bound 
	\begin{align}\label{12}
		|(w^{2l}&\notag\partial^\alpha_\beta\Gamma_\pm(f,g),\partial^\alpha_\beta h)_{L^2_{v,x}}|\\&\notag\lesssim \sum_{\substack{\alpha_1+\alpha_2=\alpha\\\beta_1+\beta_2\le\beta}}\int_{\R^3}\|\partial^{\alpha_1}_{\beta_1}f\|_{L^2_v}\|(\tilde{a}^{1/2})^ww^l\partial^{\alpha_2}_{\beta_2}g\|_{L^2_v}\|(\tilde{a}^{1/2})^ww^l\partial^\alpha_\beta h\|_{L^2_v}\,dx\\&\quad+ \sum_{\substack{\alpha_1+\alpha_2=\alpha\\\beta_1+\beta_2\le\beta}}\int_{\R^3}\|w^l\partial^{\alpha_1}_{\beta_1}f\|_{L^2_v}\|(\tilde{a}^{1/2})^w\partial^{\alpha_2}_{\beta_2}g\|_{L^2_v}\|(\tilde{a}^{1/2})^ww^l\partial^\alpha_\beta h\|_{L^2_v}\,dx.
	\end{align}
Let $i=1$ if $0<s<1/2$ and $i=2$ if $1/2\le s<1$, then 
\begin{align}\label{12a}
	\|w^l\Gamma(f,g)\|_{L^2_v}\lesssim \|w^l\<v\>^{\frac{\gamma+2s}{2}}f\|_{L^2_v}\|w^l\<v\>^{\gamma+2s}g\|_{H^i_v}
\end{align}
\end{Lem}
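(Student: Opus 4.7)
The plan is to reduce both estimates to their scalar (single-species) counterparts already established in \cite{Duan2013} and \cite{Strain2012} by using the decomposition $\Gamma_\pm(f,g) = \T(f_\pm,g_\pm)+\T(f_\mp,g_\pm)$ and then distributing derivatives via the Leibniz rule.

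First I would expand $\partial^\alpha_\beta \Gamma_\pm(f,g)$. The $x$-derivatives distribute trivially since $\T$ is bilinear: $\partial^\alpha\T(F,G)=\sum_{\alpha_1+\alpha_2=\alpha}\binom{\alpha}{\alpha_1}\T(\partial^{\alpha_1}F,\partial^{\alpha_2}G)$. For the $v$-derivatives I would use the standard Boltzmann identity, obtained by a pre/post-collisional change of variables that transfers the outer $\partial_v$ onto the integrand,
\begin{align*}
\partial^\beta_v\T(F,G)=\sum_{\beta_1+\beta_2+\beta_3=\beta}c_{\beta_1\beta_2\beta_3}\T_{\beta_3}(\partial^{\beta_1}F,\partial^{\beta_2}G),
\end{align*}
where $\T_{\beta_3}$ is exactly the operator introduced just before \eqref{7} in the Reformulation paragraph, with $\partial^{\beta_3}(\mu^{1/2}_*)$ replacing $\mu^{1/2}_*$. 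Since $\partial^{\beta_3}\mu^{1/2}_*$ remains Schwartz in $v_*$, $\T_{\beta_3}$ obeys the same pointwise-in-$v$ bilinear bound as $\T=\T_0$.

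For \eqref{12}, at each fixed $x$ I would invoke the scalar pointwise trilinear estimate of \cite[Lemma 2.2]{Duan2013}, rewritten using the norm equivalence $\|(\tilde{a}^{1/2})^w\cdot\|_{L^2_v}\approx|\cdot|_{N^{s,\gamma}}\approx\vertiii{\cdot}$ recorded in the Notations, to bound $|(w^{2l}\T_{\beta_3}(F,G),H)_{L^2_v}|$ by the two pieces on the right-hand side of \eqref{12} with $F,G,H$ replaced by the appropriate Leibniz pieces of $f,g,h$. Integrating in $x$, applying Cauchy--Schwarz, and summing over the multi-indices $(\alpha_1,\alpha_2,\beta_1,\beta_2)$ (with the finite $\beta_3$-sum absorbed into the implicit constant) yields \eqref{12}. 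The choice of which factor carries the $(\tilde a^{1/2})^w$ and which carries only $L^2_v$ is dictated by the two symmetric forms of Duan--Liu's scalar bound, accounting for both lines on the right of \eqref{12}.

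For \eqref{12a} I would apply \cite[Proposition 3.1]{Strain2012} to each of the two terms $\T(f_\pm,g_\pm)$ and $\T(f_\mp,g_\pm)$; that proposition provides exactly the pointwise-in-$v$ bilinear bound with $L^2_v$ on the first entry and $H^i_v$ on the second, the role of $i$ being to absorb the non-cutoff singularity of order $2s$ by a Sobolev embedding in $v$ -- hence the threshold at $s=1/2$. The main obstacle in both parts is the bookkeeping of $\<v\>$-weights whenever a derivative lands on $\mu^{1/2}_*$, but since any such derivative yields a Schwartz factor in $v_*$ it can be absorbed into $\<v\>^{-N}$ for arbitrary $N$, so the weights declared in \eqref{12} and \eqref{12a} are preserved. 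Summing the $\pm$-contributions then gives the two claimed inequalities.
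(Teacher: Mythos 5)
Your route for \eqref{12a} coincides with the paper's: both simply invoke \cite[Proposition 3.1]{Strain2012} term by term after the $\pm$-decomposition, so there is nothing to add there. The Leibniz expansion with the $\T_{\beta_3}$ operators and the observation that $\partial_{\beta_3}\mu_*^{1/2}$ stays Schwartz are also exactly the starting point used in the paper.

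For \eqref{12}, however, your plan and the paper's proof diverge at the decisive step. You propose to take the scalar trilinear estimate as a black box from \cite[Lemma~2.2]{Duan2013} and bootstrap it pointwise in $x$. The paper does not do this: it re-derives the scalar estimate from scratch via the Gressman--Strain dyadic Carleman machinery, introducing the operators $T^{k,l}_{\pm}$, $T^{k,l}_{*}$, and proving a new weighted trilinear bound (Proposition~2.3(iii)), whose whole point is a more careful assignment of the $w^l$ weight between the two slots $f$ and $g$ so that the two lines of \eqref{12} come out with exactly the asymmetric weight placement needed by the energy hierarchy $w^{l-|\alpha|-|\beta|}$. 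The reason the paper refuses to just cite Duan--Liu is precisely the parameter regime: \cite{Duan2013} works for soft potential ($\gamma+2s\le 0$) with $\tfrac12\le s<1$, while the present lemma is asserted for hard potential ($\gamma+2s\ge 0$) and all $0<s<1$. So "invoke Duan--Liu's Lemma 2.2" is not a self-justifying move here; you would still need to verify that its proof carries over to the current range of $(\gamma,s)$ and produces the $N^{s,\gamma}$-norms in the exact weighted configuration of \eqref{12}. That verification is essentially what Proposition~2.3(iii) and the subsequent appeal to Gressman--Strain's cancellation lemmas accomplish, and it is the genuinely new technical content of this lemma's proof. In short: your structure is right, but the step "cite \cite{Duan2013}" papers over the hard part rather than proving it, and the paper replaces it with a full re-derivation adapted to hard potential and $s<\tfrac12$.
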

The estimate \eqref{12a} comes from \cite[Proposition 3.1]{Strain2012}, so we only give a short proof of \eqref{12}. 
As in \cite{Gressman2011}, we need some preparations as the followings. 
Notice that from Carleman representation \eqref{Carleman}, the derivative on $v$ will apply to $f,g$ and $\mu^{1/2}$ respectively. Then, 
\begin{equation*}
	\psi_{|\alpha|+|\beta|-2}\partial^\alpha_\beta\T(f,g) = \sum_{\alpha_1+\alpha_2=\alpha}\sum_{\beta_1+\beta_2+\beta_3=\beta}C^{\alpha_1,\alpha_2}_\alpha C^{\beta_1,\beta_2,\beta_3}_{\beta}\psi_{|\alpha|+|\beta|-2}\T_{\beta_3}(\partial^{\alpha_1}_{\beta_1}f,\partial^{\alpha_2}_{\beta_2}g). 
\end{equation*}
Let $\{\chi_k\}_{k=-\infty}^{k=+\infty}$ be a partition of unity on $(0,\infty)$ such that $|\chi_k|\le 1$ and supp$(\chi_k)\subset [2^{-k-1},2^{-k}]$. For each $k$, we define 
\begin{align*}
	B_k = B(v-v_*,\sigma)\chi_k(|v-v'|).
\end{align*}
Now we denote 
\begin{align*}
	T^{k,l}_+(f,g,h)=\int_{\R^3}dv\int_{\R^3}dv_*\int_{\mathbb{S}^{2}}d\sigma\,B_k(v-v_*,\sigma)f_*gh'w^{2l}(v')\partial_{\beta_3}(\mu^{1/2}(v_*')),\\
	T^{k,l}_-(f,g,h)=\int_{\R^3}dv\int_{\R^3}dv_*\int_{\mathbb{S}^{2}}d\sigma\,B_k(v-v_*,\sigma)f_*ghw^{2l}(v)\partial_{\beta_3}(\mu^{1/2}(v_*)).
\end{align*}
On the other hand, we can express the collision operator $Q$ by using its dual formulation as in \cite[A1]{Gressman2011}. Indeed, after a transformation, we can put cancellations on $g$ as 
\begin{align*}
	(w^{2l}\T(f,g),h)_{L^2_v}&=\int_{\R^3}dv\int_{\R^3}dv_*\int_{\mathbb{S}^{2}}d\sigma\,\tilde{B}(v-v_*,\sigma)f_*h'w^{2l}(v')\big(\mu^{1/2}(v_*')g(v)-\mu^{1/2}(v_*)g(v')\big)\\&\qquad+T^l_*(f,g,h),
\end{align*}
where 
\begin{align*}
	\tilde{B}(v-v_*,\sigma) = \frac{4B(v-v_*,\frac{2v'-v-v_*}{|2v'-v-v_*|})}{|v'-v_*||v-v_*|}
\end{align*}
and the operator $T^l_*(f,g,h)$ does not differentiate:
\begin{align*}
	T^l_*(f,g,h) = \int_{\R^3}dv'\int_{\R^3}dv_*\int_{E^{v'}_{v_*}}d\pi_v\,f_*g'h'w^{2l}(v')\partial_{\beta_3}(\mu^{1/2}(v_*))\tilde{B}\Big(1-\frac{|v'-v_*|^{3+\gamma}}{|v-v_*|^{3+\gamma}}\Big).
\end{align*}
Here $d\pi_v$ is Lebesgue measure on the $2$-dimensional plane $E^{v'}_{v_*}$ passing through $v'$ with normal $v'-v_*$, i.e. $E^{v'}_{v_*}=\{v\in\R^3:(v-v')\cdot(v_*-v')=0\}$, and $v$ is the variable of integration. With the observation above, we can use the following alternative representations for $T^{k,l}_+$ as well as a third trilinear operator $T^{k,l}_*$:
\begin{align*}
	T^{k,l}_+(f,g,h) = \int_{\R^3}dv'\int_{\R^3}dv_*\int_{E^{v'}_{v_*}}d\pi_v\,\tilde{B}_kf_*gh'w^{2l}(v')\partial_{\beta_3}(\mu^{1/2}(v'_*)),\\
	T^{k,l}_*(f,g,h) = \int_{\R^3}dv'\int_{\R^3}dv_*\int_{E^{v'}_{v_*}}d\pi_v\,\tilde{B}_kf_*g'h'w^{2l}(v')\partial_{\beta_3}(\mu^{1/2}(v_*)),
\end{align*}
where we use the notation 
\begin{align*}
	\tilde{B}_k = \frac{4B(v-v_*,\frac{2v'-v-v_*}{|2v'-v-v_*|})}{|v'-v_*||v-v_*|}\chi_k(|v-v'|).
\end{align*}
Then for $f,g,h\in\mathscr{S}(\R^3)$, we can use the pre-post collisional change of variables, the dual representation, and the previous calculation guarantee that 
\begin{align*}
	(w^{2l}\T_{\beta_3}(f,g),h)_{L^2_v} &= \sum^\infty_{k=-\infty}\big(T^{k,l}_+(f,g,h)-T^{k,l}_-(f,g,h)\big)\\
	&= T^l_*(f,g,h) + \sum^\infty_{k=-\infty}\big(T^{k,l}_+(f,g,h)-T^{k,l}_*(f,g,h)\big).
\end{align*}
Now we collect the estimates for the operators $T^{k,l}_+$, $T^{k,l}_-$ and $T^{k,l}_*$, which can be used to prove \eqref{12}. 
\begin{Prop}\label{prop}
	Let $k$ be an integer, $m\ge 0$, $l\in\R$. We have the following uniform estimates. 
	
	(i) \begin{equation*}
		|T^{k,l}_-(f,g,h)|\lesssim 2^{2sk}\|\<v\>^{-m}f\|_{L^2_v}\|\<v\>^{\gamma/2+s}w^lg\|_{L^2_v}\|\<v\>^{\gamma/2+s}w^lh\|_{L^2_v}.
	\end{equation*}

(ii)
\begin{equation*}
	|T^{k,l}_*(f,g,h)|\lesssim 2^{2sk}\|\<v\>^{-m}f\|_{L^2_v}\|\<v\>^{\gamma/2+s}w^lg\|_{L^2_v}\|\<v\>^{\gamma/2+s}w^lh\|_{L^2_v}.
\end{equation*}

(iii)
\begin{align*}
	|T^{k,l}_+(f,g,h)|&\lesssim 2^{2sk}\|f\|_{L^2_v}\|\<v\>^{\gamma/2+s}w^lg\|_{L^2_v}\|\<v\>^{\gamma/2+s}w^lh\|_{L^2_v}\\
	&\quad+2^{2sk}\|w^lf\|_{L^2_v}\|\<v\>^{\gamma/2+s}g\|_{L^2_v}\|\<v\>^{\gamma/2+s}w^lh\|_{L^2_v}.
\end{align*}
\end{Prop}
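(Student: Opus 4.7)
The plan is to exploit three ingredients common to all three bounds: (1) the cutoff $\chi_k(|v-v'|)$ in $B_k$ (or $\tilde B_k$) localizes the deviation angle, and will supply the key factor $2^{2sk}$ after integrating the singular kinetic factor $b(\cos\theta)\sin\theta \approx \theta^{-1-2s}$; (2) the Gaussian derivative $\partial_{\beta_3}(\mu^{1/2}(\cdot))$ supplies rapid decay in one of the starred variables, so one can turn the factor $|v-v_*|^\gamma$ (or $|v'-v_*|^\gamma$) into a polynomial weight $\<\cdot\>^\gamma$ on the remaining arguments; (3) Cauchy--Schwarz in a suitable variable pair will separate the three factors $f,g,h$ after an appropriate distribution of the weight $w^{2l}$. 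The extra hypothesis $m\ge 0$ is only used through $\<v\>^{-m}$, which is harmless for us since $f$ appears with an arbitrarily high inverse weight on the left in (i)--(ii).

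\textbf{Proof of (i).} For $T^{k,l}_-(f,g,h)$ no change of variables is required: everything sits on $(v,v_*,\sigma)$. First use the Gaussian decay of $\partial_{\beta_3}(\mu^{1/2}(v_*))$ to gain an arbitrary polynomial weight in $v_*$, so the factor $|v-v_*|^\gamma$ can be split as $\<v\>^{\gamma}\<v_*\>^{|\gamma|}$. On the support of $\chi_k(|v-v'|)$ the collisional geometry $|v-v'|=|v-v_*|\sin(\theta/2)$ restricts $\theta$ to $\theta\lesssim 2^{-k}/|v-v_*|$, so
\begin{equation*}
 \int_{\mathbb{S}^2}b(\cos\theta)\chi_k(|v-v'|)\,d\sigma \;\lesssim\; 2^{2sk}\<v-v_*\>^{2s},
\end{equation*}
which, together with the Gaussian, absorbs all velocity factors into $\<v\>^{\gamma+2s}$ on $g$ and $h$. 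A Cauchy--Schwarz in $(v,v_*)$ against the product $g(v)h(v)$ and the rapidly decaying $f_*$ then yields the claimed estimate, with $w^l$ distributed equally on $g$ and $h$ (since the integrand involves $g$ and $h$ at the \emph{same} $v$, the weight assignment is symmetric).

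\textbf{Proof of (ii) and (iii).} Here we work with the dual/Carleman form on the hyperplane $E^{v'}_{v_*}$ and the kernel $\tilde B_k$, noting $\tilde B_k \sim |v'-v_*|^{\gamma-1}|v-v_*|^{-1}\chi_k(|v-v'|)$ up to angular factors. On $E^{v'}_{v_*}$, $|v-v'|$ plays the role of the angular deviation, and integrating $\tilde B_k$ in $d\pi_v$ over a disk of radius $\sim 2^{-k}$ again gives the $2^{2sk}$ scaling. For (ii), both $g'$ and $h'$ are evaluated at $v'$, so the weight $w^{2l}(v')$ distributes symmetrically onto them; the Gaussian $\partial_{\beta_3}(\mu^{1/2}(v_*))$ absorbs $|v'-v_*|^\gamma$ and any weight in $v_*$, and Cauchy--Schwarz in $(v',v_*)$ closes the estimate.

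\textbf{Main obstacle (iii).} The real issue is the weight redistribution in $T^{k,l}_+$, where $g$ sits at $v$ while $h'$ and the weight $w^{2l}(v')$ sit at $v'$, yet we want the right-hand side in terms of $w^l g$ and $w^l h$. Writing $w^{2l}(v')=w^l(v')\,w^l(v')$, we keep one copy on $h'$ and split the other using $w^l(v')\lesssim w^l(v)+w^l(v_*)$ (valid since $|v'|\le|v|+|v_*|$ on the collision manifold). The first piece $w^l(v)$ is absorbed into $g$, producing the first term in (iii); the second piece $w^l(v_*)$ is absorbed into $f_*$, producing the second term, after using the Gaussian $\partial_{\beta_3}(\mu^{1/2}(v'_*))$ to control $|v-v_*|^\gamma$ and any residual weight in $v'_*$ (using $\<v'_*\>\lesssim\<v\>\<v_*\>$ from energy conservation). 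A Cauchy--Schwarz in $(v,v_*)$ inside the $d\pi_v$ integration, followed by the disk estimate that gives $2^{2sk}$, completes the bound. The one delicate point is ensuring that after the split $w^l(v')\lesssim w^l(v)+w^l(v_*)$, the residual factors $\<v\>^{\gamma/2+s}$ land precisely on $g,h$ and not on $f$; this is guaranteed by keeping the Gaussian as decay in the $v_*$ variable throughout.
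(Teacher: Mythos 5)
Your sketch of (i) and (ii) is fine in spirit: the Gaussian in those operators really does sit at $v_*$, so its decay absorbs all needed weights and powers of $|v-v_*|$ (or $|v'-v_*|$), the angular bound yields $2^{2sk}$, and a clean Cauchy--Schwarz in the remaining variable closes the estimate. This is consistent with the fact that the paper simply cites \cite[Propositions 3.1, 3.2]{Gressman2011} for these two.

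For (iii) there is a genuine gap, and it centres on where the Gaussian actually lives. In $T^{k,l}_+$ the factor is $\partial_{\beta_3}(\mu^{1/2}(v'_*))$, i.e.\ decay in $v'_*$, not in $v_*$ or $v'$. Your argument repeatedly treats this factor as if it controlled $v_*$-weights (you even say ``this is guaranteed by keeping the Gaussian as decay in the $v_*$ variable throughout''), but smallness of $v'_*$ does not force $v$ or $v_*$ to be small, and the reverse inequality $\<v'_*\>\lesssim\<v\>\<v_*\>$ that you invoke goes the wrong way. This is precisely why the paper introduces a dichotomy on $|v'|^2$ relative to $\tfrac12(|v|^2+|v_*|^2)$: in the first regime energy conservation forces $|v'_*|^2\gtrsim |v|^2+|v_*|^2$, so the Gaussian at $v'_*$ \emph{becomes} Gaussian decay in both $v$ and $v_*$; in the second regime $|v'|^2\approx |v|^2+|v_*|^2$, which makes $w^{2l}(v')\approx w^{2l}(v)+w^{2l}(v_*)$ and allows the weight to be redistributed without any help from the Gaussian. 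Your direct split $w^l(v')\lesssim w^l(v)+w^l(v_*)$ on its own cannot substitute for this, because in the first regime you still have no Gaussian decay left over to make the resulting double integral in $(v,v_*)$ converge to a product of $L^2$ norms.

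A second missing ingredient is the Cauchy--Schwarz structure that actually separates the three functions. You write ``a Cauchy--Schwarz in $(v,v_*)$ inside the $d\pi_v$ integration,'' but $g$, $h$, $f$ sit at $v$, $v'$, $v_*$ respectively, so no single Cauchy--Schwarz in $(v,v_*)$ produces a product of three $L^2$-norms. The paper does this with a split $|T^{k,l}_+|\le I\cdot J$ in which the auxiliary weight $|v-v_*|^{\pm(\gamma+2s)}\<v\>^{\mp(\gamma+2s)}$ is inserted, so that $I$ involves only $f_*g$ and $J$ only $h'$; the $J$-estimate then crucially uses the pre-post collisional change of variables to move $h$ from $v'$ back to $v$, plus the bound $\<v\>^{\gamma+2s}\lesssim\<v'\>^{\gamma+2s}\<v_*\>^{\gamma+2s}$. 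Neither the weighted $I\times J$ split, nor the pre-post change of variables, nor the dichotomy appears in your sketch, and without them the argument for (iii) does not close.
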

\begin{proof}
	First of all, notice that (i) and (ii) are the same as \cite[Proposition 3.1, 3.2]{Gressman2011}. So we only prove (iii). 
	The key point is to assign the velocity weight to $f$ and $g$ in a better way. 
	The following inequality will frequently be used:
	\begin{align}\label{aa}
		\int_{\S^2}B_k(v-v_*,\sigma)\,d\sigma\lesssim |v-v_*|^\gamma\int^{2^{-k}|v-v_*|^{-1}}_{2^{-k-1}|v-v_*|^{-1}}\theta^{-1-2s}\,d\theta\lesssim 2^{2sk}|v-v_*|^{\gamma+2s}. 
	\end{align}
	By Cauchy-Schwarz,
	\begin{align*}
		|T^{k,l}_+(f,g,h)|&\lesssim\Big(\int_{\R^3}dv\int_{\R^3}dv_*\int_{\mathbb{S}^{2}}d\sigma\,B_k|v-v_*|^{-\gamma-2s}|f_*g|\<v\>^{\gamma+2s}w^{2l}(v')|\partial_{\beta_3}(\mu^{1/2}(v_*'))|\Big)^{1/2}\\
		&\quad\times\Big(\int_{\R^3}dv\int_{\R^3}dv_*\int_{\mathbb{S}^{2}}d\sigma\,B_k|v-v_*|^{\gamma+2s}|h'|\<v\>^{-\gamma-2s}w^{2l}(v')|\partial_{\beta_3}(\mu^{1/2}(v_*'))|\Big)^{1/2}\\
		&:= I\times J.
	\end{align*}
For the term $I$, if $|v'|\le \frac{1}{2}(|v|^2+|v_*|^2)$, the collisional conservation laws imply $\mu^{1/4}(v_*')\le \mu^{1/8}(v_*)\mu^{1/8}(v)=\mu^{1/8}(v_*')\mu^{1/8}(v')$. It follows that 
\begin{align*}
	\<v\>^{\gamma+2s}w^{2l}(v')|\partial_{\beta_3}(\mu^{1/2}(v_*'))|\lesssim \mu^{1/16}(v_*)\mu^{1/16}(v).
\end{align*}
If $|v'|> \frac{1}{2}(|v|^2+|v_*|^2)$, then $|v'|^2\approx |v|^2+|v_*|^2$ and $w^{2l}(v')\approx w^{2l}(v)+w^{2l}(v_*)$. Hence,  
\begin{align*}
	\<v\>^{\gamma+2s}w^{2l}(v')|\partial_{\beta_3}(\mu^{1/2}(v_*'))|\lesssim(w^{2l}(v)+w^{2l}(v_*))\<v\>^{\gamma+2s}. 
\end{align*}
Thus, by using \eqref{aa}, 
\begin{align*}
	I&\lesssim \Big(\int_{\R^3}dv\int_{\R^3}dv_*2^{2sk}|f_*g|\<v\>^{\gamma+2s}(w^{2l}(v)+w^{2l}(v_*))|\Big)^{1/2}\\
	&\lesssim 2^{sk}\Big(\|w^lf\|_{L^2_v}\|\<v\>^{\gamma/2+s}g\|_{L^2_v}+\|f\|_{L^2_v}\|\<v\>^{\gamma/2+s}w^lg\|_{L^2_v}\Big).
\end{align*}
For the term $J$, since $|v|-|v_*|\le |v-v_*|=|v'-v_*|\frac{1}{\cos(\theta/2)}\lesssim |v'|+|v_*|$ with $\theta\in(0,\pi/2]$, it follows that $|v|\lesssim |v'|+|v_*|$ and hence, 
\begin{align*}
	\<v\>^{\gamma+2s}\lesssim \<v'\>^{\gamma+2s}\<v_*\>^{\gamma+2s}. 
\end{align*}
Thus, by using \eqref{aa} and pre-post change of variable,  
\begin{align*}
	J&=\Big(\int_{\R^3}dv\int_{\R^3}dv_*\int_{\mathbb{S}^{2}}d\sigma\,B_k|v-v_*|^{\gamma+2s}|h|\<v'\>^{-\gamma-2s}w^{2l}(v)|\partial_{\beta_3}(\mu^{1/2}(v_*))|\Big)^{1/2}\\
	&\lesssim 2^{sk}\Big(\int_{\R^3}dv\int_{\R^3}dv_*|v-v_*|^{2\gamma+4s}|h|\<v\>^{-\gamma-2s}\<v_*\>^{\gamma+2s}w^{2l}(v)|\partial_{\beta_3}(\mu^{1/2}(v_*))|\Big)^{1/2}\\
	&\lesssim 2^{sk}\|\<v\>^{\gamma/2+s}w^lh\|_{L^2_v}, 
\end{align*}
where we used the fact that 
\begin{align*}
	\int_{\R^3}\,dv_*\mu^{\lambda}(v_*)|v-v_*|^{2\gamma+4s}\lesssim \<v\>^{2\gamma+4s}, 
\end{align*}whenever $\gamma+2s>-\frac{3}{2}$ and $\lambda>0$. Together with the estimate of $I$, we complete the proof of Proposition \ref{prop}. 

\end{proof}

\begin{proof}
	[Proof of \eqref{12}]In terms of estimates obtained in Propositions \ref{prop}, by applying the cancellation inequalities constructed in \cite[Proposition 3.6, 3.7]{Gressman2011} and carrying out the similar procedure as that of \cite[Section 6.1]{Gressman2011}, one can prove \eqref{12} and the details are omitted for brevity. This completes the proof of Lemma \ref{Lem22}.
\end{proof}

In order to obtain a suitable norm estimate of $\Gamma$ on $x$. We shall write the following estimate, which is also very useful throughout our analysis. 

\begin{Lem}
	For any $u,v\in H^2_x$, we have 
	\begin{equation}\label{13}
		\begin{aligned}
			\|uv\|_{L^2_x}&\lesssim \|\nabla_xu\|_{H^1_x}\|v\|_{L^2_x},\\
			\|uv\|_{L^2_x}&\lesssim \|\nabla_xu\|_{L^2_x}\|v\|_{H^1_x}.
		\end{aligned}
	\end{equation}
Consequently, 
	\begin{align}\label{14}
		\|uv\|_{H^2_x}\le \|\nabla_xu\|_{H^1_x}\|\nabla_xv\|_{H^1_x}. 
	\end{align}
\end{Lem}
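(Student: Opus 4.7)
The plan is to combine H\"older's inequality with the three-dimensional Sobolev embeddings $\dot H^1(\R^3)\hookrightarrow L^6$ and $H^1(\R^3)\hookrightarrow L^3\cap L^4$, together with the Gagliardo--Nirenberg bound $\|u\|_{L^\infty_x}\lesssim\|\nabla_x u\|_{L^2_x}^{1/2}\|\nabla_x^2 u\|_{L^2_x}^{1/2}$ available in $\R^3$. The two inequalities in \eqref{13} each follow from one H\"older split plus one such embedding, and \eqref{14} is then obtained by expanding $\partial^\alpha(uv)$ through the Leibniz rule and reducing to \eqref{13} termwise.

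First I would prove \eqref{13}. For the first bound I split $\|uv\|_{L^2_x}\le\|u\|_{L^\infty_x}\|v\|_{L^2_x}$ and estimate $\|u\|_{L^\infty_x}$ by the Gagliardo--Nirenberg inequality, which is dominated by $\|\nabla_x u\|_{H^1_x}$. For the second I split instead $\|uv\|_{L^2_x}\le\|u\|_{L^6_x}\|v\|_{L^3_x}$ and invoke the embeddings $\|u\|_{L^6_x}\lesssim\|\nabla_x u\|_{L^2_x}$ and $\|v\|_{L^3_x}\lesssim\|v\|_{H^1_x}$.

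Next I would prove \eqref{14} by writing $\|uv\|_{H^2_x}^2\approx\sum_{|\alpha|\le 2}\|\partial^\alpha(uv)\|_{L^2_x}^2$ and using Leibniz to decompose each $\partial^\alpha(uv)$ into a sum of terms $\partial^{\alpha_1}u\,\partial^{\alpha_2}v$ with $\alpha_1+\alpha_2=\alpha$. The endpoint second-order pieces $u\,\partial^2 v$ and $\partial^2 u\,v$ are handled by $L^\infty\cdot L^2$ using Gagliardo--Nirenberg on the $L^\infty$ factor; the genuinely mixed second-order piece $\partial u\cdot\partial v$ is placed in $L^4\cdot L^4$ using $H^1\hookrightarrow L^4$; the first-order pieces $u\,\partial v$ and $v\,\partial u$ reduce directly to \eqref{13}, taking the split that puts the derivative on the factor appearing in $L^6$; and the zeroth-order piece $\|uv\|_{L^2_x}$ is handled by \eqref{13} itself. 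Collecting the bounds absorbs everything into $\|\nabla_x u\|_{H^1_x}\|\nabla_x v\|_{H^1_x}$.

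The main (and essentially only) technical point is the bookkeeping step: one must always place the spatial derivative onto whichever factor is being put in $L^p$ for some $p>2$, so that the Sobolev embedding produces a bound homogeneous in $\|\nabla_x u\|_{H^1_x}\|\nabla_x v\|_{H^1_x}$ rather than one still carrying an uncontrolled $L^2_x$ mass of $u$ or $v$. Because \eqref{13} is stated in precisely this ``derivative on one side, $L^2$ or $H^1$ on the other'' form, it serves as a clean building block that makes the bookkeeping for each Leibniz term immediate.
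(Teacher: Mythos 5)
Your proposal is correct and follows essentially the same route as the paper: Gagliardo--Nirenberg for the $L^\infty$ bound, $L^6\cdot L^3$ H\"older plus Sobolev embedding for the second inequality in \eqref{13}, and then Leibniz plus \eqref{13} for \eqref{14}. The only (cosmetic) deviation is that for the mixed term $\partial u\,\partial v$ you use an $L^4\cdot L^4$ split via $H^1\hookrightarrow L^4$, whereas the paper uses $L^3\cdot L^6$; both give the same bound $\|\nabla_x u\|_{H^1_x}\|\nabla_x v\|_{H^1_x}$.
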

\begin{proof}
	The proof is straight forward. Notice that this lemma give that $H^2_x$ is a Banach algebra. 
	By Gagliardo-Nirenberg interpolation inequality and Sobolev embedding, we have 
	\begin{align*}
		\|u\|_{L^\infty}&\lesssim \|\nabla_xu\|^{1/2}\|\nabla^2_xu\|^{1/2}\lesssim \|\nabla_xu\|_{H^1},\\
		\|uv\|_{L^2}&\lesssim \|u\|_{L^6}\|v\|_{L^3}\lesssim \|\nabla_xu\|_{L^2}\|v\|_{H^1}.
	\end{align*}
Then \eqref{13} follows from H\"{o}lder's inequality. For \eqref{14}, 
	\begin{align*}
		\|uv\|_{H^2_x}\notag &= \sum_{|\alpha|\le 2}\|\partial^\alpha(uv)\|_{L^2}\\
		&\lesssim \sum_{|\alpha|=2}\|u\partial^\alpha v\|_{L^2}+\sum_{|\alpha|=|\beta|=1}\|\partial^\alpha u\partial^\beta v\|_{L^2} + \sum_{|\alpha|=2}\|\partial^\alpha uv\|_{L^2}\\
		&\lesssim \notag\|u\|_{L^\infty}\|\nabla_xv\|_{H^1}+\sum_{|\alpha|=|\beta|=1}\|\partial^\alpha u\|_{L^3}\|\partial^\beta v\|_{L^6} + \|\nabla_xu\|_{H^1}\|v\|_{L^\infty}.
	\end{align*}
Plugging the \eqref{13} estimate into this inequality, we have the desired control. 
\end{proof}

With the help of the above lemma, we can control the trilinear term $(\partial^\alpha_\beta\Gamma_\pm(f,g),\partial^\alpha_\beta h)_{L^2_{v,x}}$. 
\begin{Lem}\label{lemmat}
	Let $K\ge 2$. For any multi-indices $|\alpha|+|\beta|\le K$ and real number $l\ge K$, we have \begin{equation}\begin{aligned}\label{15}
		\Big|&(\psi_{2|\alpha|+2|\beta|-4}w^{2l-2|\alpha|-2|\beta|}\notag\partial^\alpha_\beta\Gamma_\pm(f,g),\partial^\alpha_\beta h)_{L^2_{v,x}}\Big|\\&\lesssim \bigg(\sum_{|\alpha|+|\beta|\le K}\|\psi_{|\alpha|+|\beta|-2}\partial^{\alpha}_\beta f\|_{L^2_{v,x}}\sum_{\substack{|\alpha|\ge 1\\ |\alpha|+|\beta|\le K}}\|\psi_{|\alpha|+|\beta|-2}(\tilde{a}^{1/2})^ww^{l-|\alpha|-|\beta|}\partial^{\alpha}_{\beta}g\|_{L^2_{v,x}}\\
		&\quad+\sum_{\substack{|\alpha|\ge 1\\ |\alpha|+|\beta|\le K}}\|\psi_{|\alpha|+|\beta|-2}\partial^{\alpha}_\beta f\|_{L^2_{v,x}}
		\sum_{|\alpha|+|\beta|\le K}\|\psi_{|\alpha|+|\beta|-2}(\tilde{a}^{1/2})^ww^{l-|\alpha|-|\beta|}\partial^{\alpha}_{\beta}g\|_{L^2_{v,x}}\\
		&\quad+\sum_{|\alpha|+|\beta|\le K}\|\psi_{|\alpha|+|\beta|-2}w^{l-|\alpha|-|\beta|}\partial^{\alpha}_\beta f\|_{L^2_{v,x}}\sum_{\substack{|\alpha|\ge 1\\ |\alpha|+|\beta|\le K}}\|\psi_{|\alpha|+|\beta|-2}(\tilde{a}^{1/2})^w\partial^{\alpha}_{\beta}g\|_{L^2_{v,x}}\\
		&\quad+\sum_{\substack{|\alpha|\ge 1\\ |\alpha|+|\beta|\le K}}\|\psi_{|\alpha|+|\beta|-2}w^{l-|\alpha|-|\beta|}\partial^{\alpha}_\beta f\|_{L^2_{v,x}}
		\sum_{|\alpha|+|\beta|\le K}\|\psi_{|\alpha|+|\beta|-2}(\tilde{a}^{1/2})^w\partial^{\alpha}_{\beta}g\|_{L^2_{v,x}}\bigg)
		\\
		&\qquad\qquad\qquad\qquad\qquad\qquad\qquad\qquad\qquad\times\|	\psi_{|\alpha|+|\beta|-2}(\tilde{a}^{1/2})^ww^{l-|\alpha|-|\beta|}\partial^{\alpha}_\beta h\|_{L^2_{v,x}},\end{aligned}
	\end{equation}
	where we restrict $t\in[0,1]$ when $\psi=t^N$ as in Theorem \ref{main2}.
\end{Lem}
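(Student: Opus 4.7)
The plan is to deduce \eqref{15} from \eqref{12} of Lemma \ref{Lem22} by performing the $x$-integration via the algebra-type estimate \eqref{13}. Apply \eqref{12} pointwise in $x$: this bounds the trilinear $L^2_v$ inner product by two sums (from the two lines of \eqref{12}), each indexed by $\alpha_1+\alpha_2=\alpha$ and $\beta_1+\beta_2\le\beta$, of products $A(x)B(x)C(x)$, where $A$, $B$, $C$ are the $L^2_v$-norms of the three factors containing $f$, $g$, $h$ respectively. Integrating in $x$ via Cauchy--Schwarz places the $h$-factor in $\|C\|_{L^2_x}$, and then \eqref{13} bounds $\|AB\|_{L^2_x}$ by either $\|\nabla_xA\|_{H^1_x}\|B\|_{L^2_x}$ or $\|A\|_{L^2_x}\|\nabla_xB\|_{H^1_x}$. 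By Minkowski, $\|\nabla_xA\|_{H^1_x}$ and $\|\nabla_xB\|_{H^1_x}$ are controlled by sums of $L^2_{v,x}$-norms of derivatives of $f$ (respectively $g$) of order between $|\alpha_i|+1$ and $|\alpha_i|+2$ in $x$.

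The choice of which factor absorbs the extra $x$-derivatives is dictated by the derivative count: we apply the first form of \eqref{13} when $|\alpha_1|$ is small enough that the upgraded index stays within the budget $K$, and otherwise the second form, noting that in the latter case $|\alpha_2|\le K-|\alpha_1|$ is small (this is where the hypothesis $K\ge 2$ is used, together with $|\alpha|+|\beta|\le K$). The fact that \eqref{13} always produces a factor of the form $\|\nabla_x(\cdot)\|_{H^1_x}$ carrying at least one $x$-derivative is precisely what yields the restriction $|\alpha|\ge 1$ on the corresponding sum on the right-hand side of \eqref{15}. Swapping the rôles of the $f$- and $g$-factors gives the four summands on the right-hand side of \eqref{15}: the first line of \eqref{12} (weight on $g$) contributes Terms 1 and 2, and the second line (weight on $f$) contributes Terms 3 and 4.

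The remaining bookkeeping is the distribution of $\psi_{2|\alpha|+2|\beta|-4}$ and of the weight $w^{l-|\alpha|-|\beta|}$ over the three factors. For $\psi=t^N$ with $t\in[0,1]$ one has $\psi\le 1$, and since $|\alpha_1|+|\alpha_2|+|\beta_1|+|\beta_2|\le|\alpha|+|\beta|$, a direct inequality check shows that $\psi_{2|\alpha|+2|\beta|-4}$ is bounded by a product of three factors of the form $\psi_{k-2}$, one on each of the norms of $f$, $g$, $h$; the offset ``$-2$'' exactly absorbs the two extra $x$-derivatives produced by Sobolev. The weight $w^{l-|\alpha|-|\beta|}$ can be freely redistributed because $l-|\alpha|-|\beta|\le l-|\alpha_i|-|\beta_i|$, and $w^l$ commutes with $(\tilde a^{1/2})^w$ modulo bounded operators by Lemma \ref{inverse_bounded_lemma}. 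The case $\psi=1$ is a triviality of the same argument.

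The main obstacle is not any single hard step but the combinatorial bookkeeping of derivative orders, ensuring that every term produced by Hölder plus \eqref{13} fits into one of the four templates on the right-hand side of \eqref{15}. In particular, the restriction $|\alpha|\ge 1$ in two of the four sums is not a cosmetic artifact but reflects that Sobolev embedding in $\R^3_x$ costs at least one $x$-derivative; and the offset ``$-2$'' in $\psi_{|\alpha|+|\beta|-2}$ is tuned precisely to absorb the at-most-two derivatives that Sobolev can introduce.
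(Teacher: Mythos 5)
Your proposal is correct and follows essentially the same route as the paper's proof: start from \eqref{12}, integrate in $x$ with H\"older to isolate the $h$-factor, then control $\|AB\|_{L^2_x}$ via \eqref{13}, choosing which form to apply according to a case analysis on the derivative distribution, and finally redistribute the $\psi$ and $w$ weights using $\psi\le 1$, the monotonicity of $l\mapsto w^l$, and Lemma~\ref{inverse_bounded_lemma}. One small imprecision: the case analysis is on the combined count $|\alpha_1|+|\beta_1|$ (being $\ge 2$, $=1$, or $=0$) rather than on $|\alpha_1|$ alone, since what matters for the Sobolev budget $\le K$ is the total order of mixed derivatives already present on the $f$- and $g$-factors; in particular the case $|\alpha_1|+|\beta_1|=1$ uses \eqref{13}$_2$ (one extra $x$-derivative on each factor), while the cases $|\alpha_1|+|\beta_1|\ge 2$ and $|\alpha_1|+|\beta_1|=0$ both use \eqref{13}$_1$ but with the two extra $x$-derivatives assigned to $g$ or $f$ respectively. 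This is the same mechanism you describe, just keyed to $|\alpha_1|+|\beta_1|$.
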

\begin{proof}
Using the estimate \eqref{12}, we have 
\begin{align*}
&\quad\,\big|(\psi_{2|\alpha|+2|\beta|-4}w^{2l-2|\alpha|-2|\beta|}\partial^\alpha_\beta\Gamma_\pm(f,g),\partial^\alpha_\beta h)_{L^2_{v,x}}\big| \\
&\lesssim \sum_{\substack{\alpha_1+\alpha_2=\alpha\\\beta_1+\beta_2\le\beta}}\Big\|\psi_{|\alpha|+|\beta|-2}\|\partial^{\alpha_1}_{\beta_1}f\|_{L^2_v}\|(\tilde{a}^{1/2})^ww^{l-|\alpha|-|\beta|}\partial^{\alpha_2}_{\beta_2}g\|_{L^2_v}\Big\|_{L^2_x}\|\psi_{|\alpha|+|\beta|-2}(\tilde{a}^{1/2})^ww^{l-|\alpha|-|\beta|}\partial^\alpha_\beta h\|_{L^2_{v,x}}\\&\quad+ \sum_{\substack{\alpha_1+\alpha_2=\alpha\\\beta_1+\beta_2\le\beta}}\Big\|\psi_{|\alpha|+|\beta|-2}\|w^{l-|\alpha|-|\beta|}\partial^{\alpha_1}_{\beta_1}f\|_{L^2_v}\|(\tilde{a}^{1/2})^w\partial^{\alpha_2}_{\beta_2}g\|_{L^2_v}\Big\|_{L^2_x}\|\psi_{|\alpha|+|\beta|-2}(\tilde{a}^{1/2})^ww^{l-|\alpha|-|\beta|}\partial^\alpha_\beta h\|_{L^2_{v,x}},
\end{align*}by choosing $l-|\alpha|-|\beta|$ to be the $l$ in \eqref{12}. 
Here we divide the summation into several parts. For brevity we denote the first terms in the norm $\|\cdot\|_{L^2_x}$ inside the summation $\sum_{\substack{\alpha_1+\alpha_2=\alpha\\\beta_1+\beta_2\le\beta}}$ on the right hand side to be $I,J$ and discuss their value in several cases. 
If $2\le|\alpha_1|+|\beta_1|\le K$, then $|\alpha_2|+|\beta_2|\le |\alpha|+|\beta|-2$ and $l-|\alpha|-|\beta|\le l-|\alpha_2+\alpha'|-|\beta_2|$ for any $1\le|\alpha'|\le2$. Notice that here we will give $\psi_{|\alpha_1|+|\beta_1|-2}$ to $f$ and $\psi_{|\alpha_2|+|\beta_2|}$ to $g$. Also, $\psi_{|\alpha_2|+|\beta_2|}\le\psi_{|\alpha_2+\alpha'|+|\beta_2|-2}$. By using \eqref{13}$_1$, we have 
\begin{align*}
	I&\lesssim\psi_{|\alpha|+|\beta|-2}\|\partial^{\alpha_1}_{\beta_1}f\|_{L^2_{v,x}}\big\|\|(\tilde{a}^{1/2})^ww^{l-|\alpha|-|\beta|}\partial^{\alpha_2}_{\beta_2}g\|_{L^2_v}\big\|_{L^\infty_x}\\
	&\lesssim\|\psi_{|\alpha_1|+|\beta_1|-2}\partial^{\alpha_1}_{\beta_1}f\|_{L^2_{v,x}}\sum_{1\le|\alpha'|\le2}\|\psi_{|\alpha_2+\alpha'|+|\beta_2|-2}(\tilde{a}^{1/2})^ww^{l-|\alpha_2+\alpha'|-|\beta_2|}\partial^{\alpha_2+\alpha'}_{\beta_2}g\|_{L^2_{v,x}}\\
	&\lesssim\sum_{|\alpha|+|\beta|\le K}\|\psi_{|\alpha|+|\beta|-2}\partial^{\alpha}_\beta f\|_{L^2_{v,x}}\sum_{\substack{|\alpha|\ge 1\\ |\alpha|+|\beta|\le K}}\|\psi_{|\alpha|+|\beta|-2}(\tilde{a}^{1/2})^ww^{l-|\alpha|-|\beta|}\partial^{\alpha}_{\beta}g\|_{L^2_{v,x}}.
\end{align*}
Secondly, if $|\alpha_1|+|\beta_1|=1$, then $|\alpha_2|+|\beta_2|\le |\alpha|+|\beta|-1$ and by using \eqref{13}$_2$, we have 
\begin{align*}
	I&\lesssim \sum_{|\alpha'|= 1}\|\psi_{|\alpha_1+\alpha'|+|\beta_1|-2}\partial^{\alpha_1+\alpha'}_{\beta_1}f\|_{L^2_{v,x}}\sum_{|\alpha'|\le 1}\|\psi_{|\alpha_2+\alpha'|+|\beta_2|-2}(\tilde{a}^{1/2})^ww^{l-|\alpha_2+\alpha'|-|\beta_2|}\partial^{\alpha_2+\alpha'}_{\beta_2}g\|_{L^2_{v,x}}\\
	&\lesssim \sum_{\substack{|\alpha|\ge 1\\|\alpha|+|\beta|\le K}}\|\psi_{|\alpha|+|\beta|-2}\partial^{\alpha}_\beta f\|_{L^2_{v,x}}\sum_{|\alpha|+|\beta|\le K}\|\psi_{|\alpha|+|\beta|-2}(\tilde{a}^{1/2})^ww^{l-|\alpha|-|\beta|}\partial^{\alpha}_{\beta}g\|_{L^2_{v,x}}.
 \end{align*}
Here we used $\psi\le1$ and $\psi_{|\alpha|+|\beta|-2}\le \psi_{|\alpha_1+\alpha'_1|+|\beta_1|-2}\psi_{|\alpha_2+\alpha'_2|+|\beta_2|-2}$, for any $|\alpha'_1|= 1$, $|\alpha'_2|\le 1$.  
Thirdly, if $|\alpha_1|+|\beta_1|=0$, then by \eqref{13}$_1$, we have 
\begin{align*}
	I&\lesssim\sum_{1\le|\alpha'|\le2}\|\psi_{|\alpha_1+\alpha'|+|\beta_1|-2}\partial^{\alpha_1+\alpha'}_{\beta_1}f\|_{L^2_{v,x}}\|\psi_{|\alpha_2|+|\beta_2|-2}(\tilde{a}^{1/2})^ww^{l-|\alpha_2|-|\beta_2|}\partial^{\alpha_2}_{\beta_2}g\|_{L^2_{v,x}}\\
	&\lesssim \sum_{\substack{|\alpha|\ge 1\\ |\alpha|+|\beta|\le K}}\|\psi_{|\alpha|+|\beta|-2}\partial^{\alpha}_\beta f\|_{L^2_{v,x}}
	\sum_{|\alpha|+|\beta|\le K}\|\psi_{|\alpha|+|\beta|-2}(\tilde{a}^{1/2})^ww^{l-|\alpha|-|\beta|}\partial^{\alpha}_{\beta}g\|_{L^2_{v,x}}.
\end{align*}
Here we used $\psi_{|\alpha|+|\beta|-2}\le \psi_{|\alpha_1+\alpha'|+|\beta_1|-2}\psi_{|\alpha_2|+|\beta_2|-2}$, for any $|\alpha'|\le2$.  
Combining the above estimate, we have the desired result for $I$:
\begin{align*}
	I&\lesssim \sum_{|\alpha|+|\beta|\le K}\|\psi_{|\alpha|+|\beta|-2}\partial^{\alpha}_\beta f\|_{L^2_{v,x}}\sum_{\substack{|\alpha|\ge 1\\ |\alpha|+|\beta|\le K}}\|\psi_{|\alpha|+|\beta|-2}(\tilde{a}^{1/2})^ww^{l-|\alpha|-|\beta|}\partial^{\alpha}_{\beta}g\|_{L^2_{v,x}}\\
	&\qquad+\sum_{\substack{|\alpha|\ge 1\\ |\alpha|+|\beta|\le K}}\|\psi_{|\alpha|+|\beta|-2}\partial^{\alpha}_\beta f\|_{L^2_{v,x}}
	\sum_{|\alpha|+|\beta|\le K}\|\psi_{|\alpha|+|\beta|-2}(\tilde{a}^{1/2})^ww^{l-|\alpha|-|\beta|}\partial^{\alpha}_{\beta}g\|_{L^2_{v,x}}.
\end{align*}
Similarly, using the same discussion on $|\alpha_2|+|\beta_2|$ instead of $|\alpha_1|+|\beta_1|$, we have 
\begin{align*}
	J&\lesssim \sum_{|\alpha|+|\beta|\le K}\|\psi_{|\alpha|+|\beta|-2}w^{l-|\alpha|-|\beta|}\partial^{\alpha}_\beta f\|_{L^2_{v,x}}\sum_{\substack{|\alpha|\ge 1\\ |\alpha|+|\beta|\le K}}\|\psi_{|\alpha|+|\beta|-2}(\tilde{a}^{1/2})^w\partial^{\alpha}_{\beta}g\|_{L^2_{v,x}}\\
	&\qquad+\sum_{\substack{|\alpha|\ge 1\\ |\alpha|+|\beta|\le K}}\|\psi_{|\alpha|+|\beta|-2}w^{l-|\alpha|-|\beta|}\partial^{\alpha}_\beta f\|_{L^2_{v,x}}
	\sum_{|\alpha|+|\beta|\le K}\|\psi_{|\alpha|+|\beta|-2}(\tilde{a}^{1/2})^w\partial^{\alpha}_{\beta}g\|_{L^2_{v,x}}.
\end{align*}
Combining all the above estimate, we have the desired bound. 
Similar discussion on the indices $|\alpha_1|+|\beta_1|$ will be used frequently later and will not be mentioned for brevity. 


\end{proof}

A direct consequence of Lemma \ref{lemmat} is the following estimate; see \cite[Lemma 3.1]{Duan2013}.
\begin{Lem}\label{lemmag}
	Let $K\ge 2$, $|\alpha|+|\beta|\le K$, $l\ge K$. Then,
	\begin{equation*}
		|(\partial^\alpha\Gamma_\pm(f,f),\partial^\alpha f_\pm)_{L^2_{v,x}}|\lesssim\E^{1/2}_{K,l}\D_{K,l}(t),
	\end{equation*}
and
	\begin{equation*}
		|(w^{2l-2|\alpha|-2|\beta|}\partial^\alpha_\beta\Gamma_\pm(f,f),\partial^\alpha_\beta(\II-\PP)f)_{L^2_{v,x}}|\lesssim \E^{1/2}_{K,l}\D_{K,l}(t),
	\end{equation*}
In particular, when $|\alpha|\ge 1$, 
\begin{equation*}
|(w^{2l-2|\alpha|-2|\beta|}\partial^\alpha_\beta\Gamma_\pm(f,f),\partial^\alpha_\beta f)_{L^2_{v,x}}|\lesssim \E^{1/2}_{K,l}\D_{K,l}(t).
\end{equation*}
When $0\le |\alpha|\le K$, 
\begin{align*}
	|(w^{2l-2|\alpha|-2|\beta|}\partial^\alpha_\beta\Gamma_\pm(f,f),\partial^\alpha_\beta f)_{L^2_{v,x}}|\lesssim\E^{1/2}_{K,l}\D_{K,l}(t)+\E_{K,l}(t)\D^{1/2}_{K,l}(t)
\end{align*}
Also, for any function $\zeta(v)$ satisfying $|\zeta(v)|\approx e^{-\lambda|v|^2}$ for some $\lambda>0$, we have 
\begin{align*}
	(\partial^\alpha\Gamma_\pm(f,f),\zeta(v))_{L_{v,x}}\lesssim \E^{1/2}_{K,l}\D^{1/2}_{K,l}(t).
\end{align*}
\end{Lem}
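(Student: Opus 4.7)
The plan is to obtain the five estimates of Lemma \ref{lemmag} as direct corollaries of the trilinear bound \eqref{15} in Lemma \ref{lemmat} (equivalently, of the pointwise estimate \eqref{12} in Lemma \ref{Lem22}), by specializing $g=f$ and matching the resulting sums against the definitions \eqref{Defe} and \eqref{Defd}; since $\psi\equiv 1$ throughout this section, the $\psi_k$ weights collapse. The governing dictionary I would set up is the following: sums of $\|\partial^{\alpha'}_{\beta'}f\|_{L^2_{v,x}}$ or of $\|w^{l-|\alpha'|-|\beta'|}\partial^{\alpha'}_{\beta'}f\|_{L^2_{v,x}}$ over $|\alpha'|+|\beta'|\le K$ are absorbed by $\E_{K,l}^{1/2}$, whereas the analogous sums with the additional factor $(\tilde a^{1/2})^w$ are absorbed by $\D_{K,l}^{1/2}$ \emph{provided} either the summand is microscopic, or the constraint $|\alpha'|\ge 1$ is enforced. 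The right-hand side of \eqref{15} is crafted so that at least one $(\tilde a^{1/2})^w$-slot per product satisfies this $|\alpha'|\ge 1$ restriction, which is exactly what makes the macroscopic piece $\sum_{1\le|\alpha|\le K}\|\partial^\alpha\P f\|$ of \eqref{Defd} available to absorb the projection $\P f$.

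With this dictionary the estimates (a), (b), (c) should follow mechanically. For (b), the outer factor $\|(\tilde a^{1/2})^w w^{l-|\alpha|-|\beta|}\partial^\alpha_\beta(\I-\P)f\|$ is itself a summand of $\D_{K,l}^{1/2}$ thanks to the microscopic nature of $(\I-\P)f$; for (c) with $|\alpha|\ge 1$ and $h=f$, the same factor lies in $\D_{K,l}^{1/2}$ via the macroscopic piece of \eqref{Defd}. The remaining two slots in \eqref{15} then contribute one $\E_{K,l}^{1/2}$ and one $\D_{K,l}^{1/2}$, yielding the claimed $\E_{K,l}^{1/2}\D_{K,l}$. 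Estimate (a) has no velocity weight, so I would invoke \eqref{12} with $l=0$ and distribute the $L^\infty_x/L^2_x$ split via \eqref{13} exactly as in the proof of Lemma \ref{lemmat}; the outer norm $\|(\tilde a^{1/2})^w\partial^\alpha f_\pm\|_{L^2_v}$ splits into its $\P$ and $(\I-\P)$ parts, the microscopic part sits in $\D_{K,l}^{1/2}$, and on the finite-dimensional range of $\PP$ the operator $(\tilde a^{1/2})^w$ reduces to a polynomial weight controlled by the Gaussian of $\mu^{1/2}$ and absorbed by $\E_{K,l}^{1/2}$.

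The asymmetric estimate (d), with $h=f$ and $0\le|\alpha|\le K$, is where the additional term $\E_{K,l}\D_{K,l}^{1/2}$ appears. At $|\alpha|=|\beta|=0$ the outer factor $\|(\tilde a^{1/2})^w w^{l}\,\P f\|$ is \emph{not} a summand of $\D_{K,l}$, because the macroscopic part of $\D_{K,l}$ starts at $|\alpha|\ge 1$; it is instead bounded only by $\E_{K,l}^{1/2}$. Consequently in this one exceptional slot the $\E^{1/2}$--$\D^{1/2}$ assignment is shifted, so that one of the two $f$-factors on the right of \eqref{15} must be counted by $\D_{K,l}^{1/2}$ rather than $\E_{K,l}^{1/2}$, producing $\E_{K,l}\D_{K,l}^{1/2}$; all other slots remain absorbed by $\E_{K,l}^{1/2}\D_{K,l}$ as before. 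Finally, the $\zeta(v)$-bound I would deduce from \eqref{12} with $h=\zeta$: since $\zeta$ is Schwartz, $\|(\tilde a^{1/2})^w w^{l}\zeta\|_{L^2_v}=O(1)$, so only two $f$-factors remain, supplying one $\E_{K,l}^{1/2}$ and one $\D_{K,l}^{1/2}$ and hence the announced $\E_{K,l}^{1/2}\D_{K,l}^{1/2}$. The hard part is purely organizational: carefully assigning the $|\alpha'|\ge 1$ slot of Lemma \ref{lemmat} to the factor that must carry a macroscopic summand of $\D_{K,l}$, and isolating the single configuration in (d) where this matching fails and the extra term $\E_{K,l}\D_{K,l}^{1/2}$ becomes unavoidable.
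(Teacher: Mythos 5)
Your general strategy — specialize the trilinear bound \eqref{15}/\eqref{12} with $g=f$, split each factor into $\PP$ and $\II-\PP$ parts, and match the resulting sums against the definitions \eqref{Defe}, \eqref{Defd} — is exactly the route the paper takes in its one-line proof citing \cite[Lemma 3.1]{Duan2013}. Your bookkeeping for (b), (c), (d) and the $\zeta$-estimate is sound, and your identification of \emph{why} (d) picks up the extra term $\E_{K,l}\D_{K,l}^{1/2}$ (the $\PP f$ contribution in the $h$-slot at $|\alpha|=|\beta|=0$ sits only in $\E$, not in $\D$) is the right diagnosis.

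There is, however, a genuine gap in your treatment of estimate (a). You propose to split the outer factor $\|(\tilde a^{1/2})^w\partial^\alpha f_\pm\|_{L^2_v}$ and absorb its $\PP$-part into $\E_{K,l}^{1/2}$, correctly noting that $\D_{K,l}$'s macroscopic summand only starts at $|\alpha|\ge 1$. But with that accounting, the configuration $|\alpha|=0$ with the $h$-slot landing on $\PP f$ produces the same worst-case product $\E^{1/2}\cdot\D^{1/2}\cdot\E^{1/2}=\E\D^{1/2}$ that you yourself isolate as unavoidable in (d); it does \emph{not} give the strictly stronger $\E^{1/2}\D$ that (a) asserts. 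Your dictionary alone cannot distinguish (a) from (d), so as written your argument proves (d) twice and (a) not at all.

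The missing ingredient — which is what the paper's truncated remark ``notice that $\Gamma_\pm$ is in $\ker L$ and~.'' is pointing at, modulo the evident typo (it should read ``$\Gamma$ is orthogonal to $\ker L$'') — is the conservation property $\PP\Gamma(f,f)=0$, i.e.\ $\Gamma(f,f)\perp\ker L$ in $L^2_v\times L^2_v$. Because the pairing in (a) carries \emph{no} velocity weight, you may use this before applying \eqref{12}:
\begin{equation*}
\sum_\pm\big(\partial^\alpha\Gamma_\pm(f,f),\partial^\alpha f_\pm\big)_{L^2_v}
=\sum_\pm\big(\partial^\alpha\Gamma_\pm(f,f),\partial^\alpha(\II-\PP)f\big)_{L^2_v}.
\end{equation*}
This makes the $h$-slot microscopic unconditionally, including at $|\alpha|=0$, so it always falls inside $\D_{K,l}^{1/2}$ and you recover $\E_{K,l}^{1/2}\D_{K,l}$; in effect (a) reduces to (b) with weight $l=0$. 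The reason the same reduction is unavailable in (d) is precisely the weight $w^{2l-2|\alpha|-2|\beta|}$ in the pairing, which destroys orthogonality with $\ker L$ and forces you to retain the $\PP f$ contribution at $|\alpha|=|\beta|=0$, yielding the extra $\E_{K,l}\D_{K,l}^{1/2}$. Once the orthogonality step is inserted in (a), your proposal matches the paper's argument.

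One minor dictionary point worth making explicit: the factor without $(\tilde a^{1/2})^w$ in \eqref{12} (the $f$-slot), when it carries $|\alpha'|\ge 1$ from the $L^\infty_x$ split, must be placed in $\D_{K,l}^{1/2}$ rather than $\E_{K,l}^{1/2}$; this uses the hard-potential inequality $\|\cdot\|_{L^2_v}\lesssim\|(\tilde a^{1/2})^w\cdot\|_{L^2_v}$ together with $\sum_{1\le|\alpha|\le K}\|\partial^\alpha\PP f\|^2\le\D_{K,l}$. Without this observation, lines 2 and 4 of \eqref{15} would only close at the level of $\E$, which is again too weak for (a)–(c) and (e).
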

\begin{proof}
	With Lemma \ref{lemmat}, these energy estimate can be verified directly by the definition of $\E$ and $\D$ as in \cite[Lemma 3.1]{Duan2013}, and details are omitted for brevity. Also, notice that $\Gamma_\pm$ is in $\ker L$ and . 
\end{proof}

\section{Macroscopic Estimate}
In this section, we assume $\psi=1$. We will analyze the macroscopic dissipation by taking the macroscopic projection on the equation \eqref{7}. Since we are dealing with Vlasov-Poisson-Boltzmann system, the idea here is similar to the Boltzmann equation case \cite{Gressman2011} and Vlasov-Maxwell-Boltzmann system case \cite{Duan2011}. But there's still some difference between these equations and Vlasov-Poisson-Boltzmann system and we will write a detailed proof for the sake of completeness. Notice that the calculation in this section is valid for both hard potential $\gamma+2s\ge 0$ and soft potential $\gamma+2s<0$. 

Recall the projection notation in \eqref{10}. By multiplying the equation \eqref{7} with $\mu^{1/2}, v_j\mu^{1/2}(j=1,2,3)$ and $\frac{1}{6}(|v|^2-3)\mu^{1/2}$ and then integrating them over the $\R^3_v$, we have 
\begin{equation}\label{17}\left\{\begin{aligned}
	&\partial_ta_\pm + \nabla\cdot b + \nabla_x\cdot(v\mu^{1/2},(\II-\PP)f)_{L^2_v} =0,\\
	&\partial_t\big(b_j+(v_j\mu^{1/2},(\II-\PP)f)_{L^2_v}\big)+\partial_j(a_\pm+2c)\mp E_j+(v_j\mu^{1/2},v\cdot\nabla_x(\II-\PP)f)_{L^2_v}\\&\qquad\qquad\qquad\qquad\qquad\qquad\qquad\qquad\qquad = (L_\pm f+g_\pm,v_j\mu^{1/2})_{L^2_v},\\
	&\partial_t\Big(c+\frac{1}{6}((|v|^2-3)\mu^{1/2},(\II-\PP)f)_{L^2_v}\Big)+\frac{1}{3}\nabla_x\cdot b + \frac{1}{6}((|v|^2-3)\mu^{1/2},v\cdot\nabla(\II-\PP)f)_{L^2_v} \\&\qquad\qquad\qquad\qquad\qquad\qquad\qquad\qquad\qquad= \frac{1}{6}(L_\pm f+g_\pm,(|v|^2-3)\mu^{1/2})_{L^2_v},
\end{aligned}\right.
\end{equation}
where for brevity, we denote $I=(I_+,I_-)$ with $I_\pm f=f_\pm$ and 
\begin{align}\label{22a}
		g_\pm = \pm\nabla_x\phi\cdot\nabla_vf_\pm\mp\frac{1}{2}\nabla_x\phi\cdot vf_\pm+\Gamma_\pm(f,f).  
\end{align}
Notice that $(\P_\pm f,v\mu^{1/2})_{L^2_v}$ and $(\P_\pm f,(|v|^2-3)\mu^{1/2})_{L^2_v}$ is not $0$ in general and similar for $\Gamma_\pm$. Also, we have used
\begin{align*}
	(\pm\nabla_x\phi\cdot\nabla_vf_\pm\mp\frac{1}{2}\nabla_x\phi\cdot vf_\pm,\mu^{1/2})_{L^2_v}=0,
\end{align*}which is obtained by integration by parts on $v$. In order to obtain the high-order moments, as in \cite{Duan2011}, we define for $1\le j,k\le 3$ that 
\begin{align*}
	\Theta_{jk}(f_\pm) = ((v_iv_j-1)\mu^{1/2},f_\pm)_{L^2_v},\ \ \Lambda_j(f_\pm) =\frac{1}{10}((|v|^2-5)v_j\mu^{1/2},f_\pm)_{L^2_v}. 
\end{align*}
Then multiplying the above high-order moments with equation \eqref{7}, we have 
\begin{equation}\label{18}\left\{
	\begin{aligned}
		&\partial_t\big(\Theta_{jj}((\II-\PP)f)+2c\big) + 2\partial_jb_j = \Theta_{jj}(g_\pm+h_\pm),\\
		&\partial_t\Theta_{jk}((\II-\PP)f)+\partial_jb_k+\partial_kb_j + \nabla_x\cdot(v\mu^{1/2},(\II-\PP)f)_{L^2_v} = \Theta_{jk}(g_\pm+h_\pm)+(\mu^{1/2},g_\pm)_{L^2_v},\ j\neq k,\\
		&\partial_t\Lambda_j((\II-\PP)f)+\partial_jc = \Lambda_j(g_\pm+h_\pm),
	\end{aligned}\right.
\end{equation}
where 
\begin{align*}
	h_\pm = -v\cdot\nabla_x(\II-\PP)f-L_\pm f. 
\end{align*}
By taking the mean value of every two equations with sign $\pm$ in \eqref{17}, we have 
\begin{equation}\label{19}\left\{
	\begin{aligned}
		&\partial_t\Big(\frac{a_++a_-}{2}\Big)+\nabla_x\cdot b = 0,\\
		&\partial_tb_j+\partial_j\Big(\Big(\frac{a_++a_-}{2}\Big)+2c\Big)+\frac{1}{2}\sum_{k=1}^3\partial_k\Theta_{jk}((\I-\P)f\cdot(1,1))
		= \frac{1}{2}(g_++g_-,v_j\mu^{1/2})_{L^2_v},\\
		&\partial_tc+\frac{1}{3}\nabla_x\cdot b + \frac{5}{6}\sum^3_{j=1}\partial_j\Lambda_j((\I-\P)f\cdot(1,1)) = \frac{1}{12}(g_++g_-,(|v|^2-3)\mu^{1/2})_{L^2_v},
	\end{aligned}\right.
\end{equation}for $1\le j\le3$. Similarly, taking the mean value with $\pm$ of the equation in \eqref{18}, we have 
\begin{equation}\label{20}
	\left\{\begin{aligned}
		&\partial_t\Big(\frac{1}{2}\Theta_{jk}((\II-\PP)f\cdot(1,1))+2c\delta_{jk}\Big) + \partial_jb_k+\partial_kb_j = \frac{1}{2}\Theta_{jk}(g_++g_-+h_++h_-),\\
		&\frac{1}{2}\partial_t\Lambda_j((\II-\PP)f\cdot(1,1))+\partial_jc = \frac{1}{2}\Lambda_j(g_++g_-+h_++h_-),
	\end{aligned}\right.
\end{equation}
for $1\le j,k\le 3$. $\delta_{jk}$ is the Kronecker delta. Moreover, for obtaining the dissipation of the electric field $E$, we take the difference with sign $\pm$ in the first two equations in \eqref{17}, we have 
\begin{equation}\label{21}\left\{
	\begin{aligned}
		&\partial_t(a_+-a_-)+\nabla_x\cdot G=0,\\
		&\partial_tG + \nabla_x(a_+-a_-)-2E+\nabla_x\cdot\Theta((\I-\P)f\cdot(1,-1))=(v\mu^{1/2},(g+Lf)\cdot(1,-1))_{L^2_v},
	\end{aligned}\right.
\end{equation}
where 
\begin{align}\label{27aa}
	G = (v\mu^{1/2},(\I-\P)f\cdot(1,-1))_{L^2_v}.
\end{align}
Recall that $E=-\nabla_x\phi$. Then by equation \eqref{8}, we have 
\begin{align}\label{16}
	\nabla_x\cdot E = a_+-a_-. 
\end{align}
In order to extract the dissipation rate of $a_\pm,b,c,E$, we would like to take the Fourier transform on the equation \eqref{19}, \eqref{20}, \eqref{21} and \eqref{16} with respect to $x$. Then
\begin{equation}\label{22}
	\left\{\begin{aligned}
		&\partial_t\Big(\frac{\widehat{a_+}+\widehat{a_-}}{2}\Big)+iy\cdot \widehat{b} = 0,\\
		&\partial_t\widehat{b_j}+iy_j\Big(\Big(\frac{\widehat{a_+}+\widehat{a_-}}{2}\Big)+2\widehat{c}\Big)+\frac{1}{2}\sum_{k=1}^3iy_k\Theta_{jk}((\I-\P)\widehat{f}\cdot(1,1))
		= \frac{1}{2}(\widehat{g_+}+\widehat{g_-},v_j\mu^{1/2})_{L^2_v},\\
		&\partial_t\widehat{c}+\frac{1}{3}iy\cdot \widehat{b} + \frac{5}{6}\sum^3_{j=1}iy_j\Lambda_j((\I-\P)\widehat{f}\cdot(1,1)) = \frac{1}{12}(\widehat{g_+}+\widehat{g_-},(|v|^2-3)\mu^{1/2})_{L^2_v},\\
		&\partial_t\Big(\frac{1}{2}\Theta_{jk}((\I-\P)\widehat{f}\cdot(1,1))+2\widehat{c}\delta_{jk}\Big) + iy_j\widehat{b_k}+iy_k\widehat{b_j} = \frac{1}{2}\Theta_{jk}(\widehat{g_+}+\widehat{g_-}+\widehat{h_+}+\widehat{h_-}),\\
		&\frac{1}{2}\partial_t\Lambda_j((\I-\P)\widehat{f}\cdot(1,1))+iy_j\widehat{c} = \frac{1}{2}\Lambda_j(\widehat{g_+}+\widehat{g_-}+\widehat{h_+}+\widehat{h_-}),
	\end{aligned}\right.
\end{equation}
\begin{equation}\label{23}\left\{
	\begin{aligned}
		&\partial_t(\widehat{a_+}-\widehat{a_-})+iy\cdot \widehat{G}=0,\\
		&\partial_t\widehat{G}+iy(\widehat{a_+}-\widehat{a_-})-2\widehat{E}+iy\cdot\Theta((\I-\P)\widehat{f}\cdot(1,-1))=(v\mu^{1/2},(g+Lf)\cdot(1,-1))_{L^2_v},\\
		&iy\cdot \widehat{E}=\widehat{a_+}-\widehat{a_-}.
	\end{aligned}\right.
\end{equation}

\begin{Lem}\label{Lemma31}
	Let $(f,E)$ be the solution to the Cauchy problem \eqref{7}-\eqref{9}. For any $K\ge 2$, there exists a functional $\E^{(1)}_{K}(t),\E^{(1)}_{K,h}(t)$ such that
	\begin{align}\label{27a}
		\E^{(1)}_{K}&\lesssim \sum_{|\alpha|\le K}\|\partial^\alpha f\|^2_{L^2_{x,v}}+\sum_{|\alpha|\le K-1}\|\partial^\alpha E\|^2_{L^2_x},\\
		\label{27b}\E^{(1)}_{K,h}&\lesssim \sum_{1\le|\alpha|\le K}\|\partial^\alpha\P f\|^2_{L^2_{x,v}}+\sum_{|\alpha|\le K}\|\partial^\alpha(\I-\P) f\|^2_{L^2_{x,v}}+\sum_{|\alpha|\le K-1}\|\partial^\alpha E\|^2_{L^2_x},
	\end{align}
and for any $t\ge 0$, 
\begin{equation}\label{24}\begin{aligned}
		\partial_t&\E^{(1)}_{K} + \lambda\sum_{|\alpha|\le K-1} \|\partial^\alpha\nabla_x(a_\pm,b,c)\|^2_{L^2_x}+\|a_+-a_-\|^2_{L^2_x}+\sum_{|\alpha|\le K-1}\|\partial^\alpha E\|^2_{L^2_x}\\
		&\qquad\lesssim\sum_{|\alpha|\le K} \|(\tilde{a}^{1/2})^w(\I-\P)\partial^{\alpha}{f}\|_{L^2_{v,x}}^2+\E_{K,l}(t)\D_{K,l}(t).
	\end{aligned}
\end{equation}
\begin{equation}\label{24a}\begin{aligned}
	\partial_t&\E^{(1)}_{K,h} + \lambda\sum_{1\le|\alpha|\le K-1} \|\partial^\alpha\nabla_x(a_\pm,b,c)\|^2_{L^2_x}+\|\nabla_x(a_+-a_-)\|^2_{L^2_x}+\sum_{|\alpha|\le K-1}\|\partial^\alpha E\|^2_{L^2_x}\\
	&\qquad\lesssim\sum_{|\alpha|\le K} \|(\tilde{a}^{1/2})^w(\I-\P)\partial^{\alpha}{f}\|_{L^2_{v,x}}^2+\E^h_{K,l}(t)\D_{K,l}(t).
\end{aligned}
\end{equation}
\end{Lem}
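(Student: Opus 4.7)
The plan is to adapt the macroscopic-dissipation strategy used for Vlasov-Maxwell-Boltzmann in \cite{Duan2011} to the Poisson coupling. The starting point is the moment system \eqref{22}-\eqref{23} in Fourier variables, which isolates, equation by equation, a single time derivative of a macroscopic quantity balanced against a spatial derivative of another plus microscopic and nonlinear forcing. The objective is to construct an interaction functional $\E^{(1)}_K$, a linear combination with small relative weights of several cross-product functionals, whose time derivative contains the dissipation rates of $\nabla_x(a_\pm,b,c)$, $a_+-a_-$, and $\partial^\alpha E$, while all remaining terms are either of microscopic type (absorbable by $\sum_{|\alpha|\le K}\|(\tilde a^{1/2})^w(\I-\P)\partial^\alpha f\|_{L^2_{v,x}}^2$) or nonlinear (bounded by $\E_{K,l}\D_{K,l}$).

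I would proceed step by step. First, to extract $\|\partial^\alpha\nabla_x c\|^2_{L^2_x}$, test the fifth equation of \eqref{22} (for $\Lambda_j((\I-\P)\widehat f\cdot(1,1))$) against $iy_j\overline{\widehat c}$ and take real parts, which produces the dissipation together with a time derivative of $\Re(\Lambda_j((\I-\P)\widehat f\cdot(1,1))\,\overline{iy_j\widehat c})$; second, to extract $\|\partial^\alpha\nabla_x b\|^2_{L^2_x}$, test the fourth equation of \eqref{22} against the symmetric tensor $iy_j\overline{\widehat{b_k}}+iy_k\overline{\widehat{b_j}}$; third, to extract $\|\partial^\alpha\nabla_x a_\pm\|^2_{L^2_x}$, test the $b_j$ equation (the second in \eqref{17}) against $iy_j\overline{\widehat{a_\pm}}$, using $E\pm E=0$ on the average and $\nabla_x\cdot E=a_+-a_-$ on the difference to handle the $\mp E_j$ term. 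Microscopic remainders are Cauchy-Schwarzed into $\sum_{|\alpha|\le K}\|(\tilde a^{1/2})^w(\I-\P)\partial^\alpha f\|_{L^2_{v,x}}^2$, using the boundedness of the moments $\Theta_{jk},\Lambda_j,G$.

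The delicate step concerns the electric field. From the third equation of \eqref{23}, $iy\cdot\widehat E=\widehat{a_+}-\widehat{a_-}$, so $\|\partial^\alpha(a_+-a_-)\|_{L^2_x}\le \|\nabla_x\partial^\alpha E\|_{L^2_x}$, and conversely $\|\partial^\alpha\nabla_x E\|_{L^2_x}\approx \|\partial^\alpha(a_+-a_-)\|_{L^2_x}$ since $E=-\nabla_x\phi$ is curl-free. To recover the \emph{zeroth}-order dissipation $\|\partial^\alpha E\|_{L^2_x}^2$ for $|\alpha|\le K-1$ (needed both in \eqref{24} with the unweighted $\|a_+-a_-\|_{L^2_x}^2$ and in \eqref{24a}), use the second equation of \eqref{23}: test against $-\tfrac12\overline{\widehat E}$ and take the real part. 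The $-2\widehat E$ term yields $+|\widehat E|^2$, the $\partial_t\widehat G$ term produces a clean time derivative of $-\tfrac12\Re(\widehat G\cdot\overline{\widehat E})$ after using $\partial_t\widehat E$ from the Ampère-type identity obtained by combining $\partial_t(\widehat{a_+}-\widehat{a_-})=-iy\cdot\widehat G$ with $iy\cdot\widehat E=\widehat{a_+}-\widehat{a_-}$ (so that $iy\cdot\partial_t\widehat E=-iy\cdot\widehat G$, and on the curl-free part $\partial_t\widehat E=-\widehat G+\text{divergence-free remainder}$, which is harmless once paired with the curl-free $\widehat E$), while $iy(\widehat{a_+}-\widehat{a_-})\cdot\overline{\widehat E}$ contributes $|\widehat{a_+}-\widehat{a_-}|^2$ by the Poisson relation. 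This is the main obstacle, since low frequencies $|y|\to 0$ cannot be inverted pointwise through Poisson.

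Finally, take $\E^{(1)}_K=\sum_{|\alpha|\le K-1}(\kappa_1 I_1^\alpha+\kappa_2 I_2^\alpha+\kappa_3 I_3^\alpha+\kappa_4 I_4^\alpha)$ with $0<\kappa_4\ll\kappa_3\ll\kappa_2\ll\kappa_1\ll 1$ so the cross remainders from each step are absorbed by the dissipations of the previous ones; each $I_i^\alpha$ is itself bounded by $\|\partial^\alpha f\|_{L^2_{v,x}}^2+\|\partial^\alpha E\|_{L^2_x}^2$, which verifies \eqref{27a}. The nonlinear forcing $g_\pm=\pm\nabla_x\phi\cdot\nabla_v f_\pm\mp\tfrac12\nabla_x\phi\cdot v f_\pm+\Gamma_\pm(f,f)$ appearing in \eqref{22a} is paired with the exponentially decaying test functions $v_j\mu^{1/2}$ etc., so the moments are controlled by Lemma \ref{lemmag} combined with the Sobolev-algebra estimates \eqref{13}-\eqref{14}; this produces the term $\E_{K,l}\D_{K,l}$ on the right of \eqref{24}. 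For the high-order version \eqref{24a}, restrict the sums in Steps 1-3 to $1\le|\alpha|\le K-1$ (removing the zeroth-order macro dissipation), keep Step 4 unchanged to retain $\sum_{|\alpha|\le K-1}\|\partial^\alpha E\|_{L^2_x}^2$ and $\|\nabla_x(a_+-a_-)\|_{L^2_x}^2$, and bound the nonlinear forcing by $\E^h_{K,l}\D_{K,l}$ using again Lemma \ref{lemmag} together with the observation that every factor in the trilinear bound carries at least one spatial derivative when $\P f$ is involved.
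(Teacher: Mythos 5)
Your proposal follows essentially the same route as the paper's proof: test each Fourier-side moment equation against the gradient of the macroscopic component whose dissipation you want, pull out a time derivative to build an interaction functional, Cauchy--Schwarz the microscopic and nonlinear residuals, and combine the four pieces with a small-coefficient hierarchy so that cross terms get absorbed. Two small remarks. First, you streamline the $E$ and $a_+-a_-$ dissipation into one inner product against $-\tfrac12\overline{\widehat E}$, simultaneously producing $|\widehat E|^2$ and $\tfrac12|\widehat{a_+}-\widehat{a_-}|^2$ via the Poisson relation $iy\cdot\widehat E=\widehat{a_+}-\widehat{a_-}$; the paper instead tests once against $iy(\widehat{a_+}-\widehat{a_-})$ (getting the stronger coefficient $(|y|^2+2)$ on $|\widehat{a_+}-\widehat{a_-}|^2$) and once against $\widehat E$, then combines. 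Your version suffices for \eqref{24} and \eqref{24a}, since the extra $|y|^2$ factor is not needed there. Second, your parenthetical about ``$\partial_t\widehat E=-\widehat G+\text{divergence-free remainder}$, harmless once paired with the curl-free $\widehat E$'' is garbled: the leftover after pulling out $\partial_t$ is $\tfrac12\Re(\widehat G\cdot\overline{\partial_t\widehat E})$, a pairing with $\widehat G$, not $\widehat E$. What actually saves you is that the Poisson relation together with the continuity equation $\partial_t(\widehat{a_+}-\widehat{a_-})=-iy\cdot\widehat G$ gives $\partial_t\widehat E=-y(y\cdot\widehat G)/|y|^2$, so $|\partial_t\widehat E|\lesssim|\widehat G|\lesssim\|\zeta(\I-\P)\widehat f\|_{L^2_v}$, which is exactly the microscopic remainder the paper records. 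Your appeal to ``every factor carries at least one spatial derivative'' to upgrade $\E_{K,l}\D_{K,l}$ to $\E^h_{K,l}\D_{K,l}$ in \eqref{24a} is not literally true for the $|\alpha|=0$ moment of the $G$-equation, but the paper itself is equally terse on this point, so I would not count this against you.
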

\begin{proof}We only need to prove the case of $|\alpha|=0$. Since equations \eqref{19}, \eqref{20} and \eqref{16} are linear in $a_\pm,b,c,f$, one can directly apply the derivative $\partial^\alpha$ to them. The results \eqref{24}\eqref{24a} follows similarly. 
	 Let $\zeta(v)$ be a function satisfying 
	 \begin{align*}
	 	|\zeta(v)|\approx e^{-\lambda|v|^2},
	 \end{align*} for some $\lambda>0$. Notice that we will use notation $\zeta$ for different function satisfying the above equivalence. 
	
	For the estimate on $\widehat{a_+}+\widehat{a_-}$, we use the equation \eqref{22}$_2$ to get 
	\begin{align*}
		|y|^2\Big|\frac{\widehat{a_+}+\widehat{a_-}}{2}\Big|^2
		&=\sum^{3}_{j=1}\Big(iy_j\frac{\widehat{a_+}+\widehat{a_-}}{2}\Big|iy_j\frac{\widehat{a_+}+\widehat{a_-}}{2}\Big)\\
		&=\sum^{3}_{j=1}\Big(iy_j\frac{\widehat{a_+}+\widehat{a_-}}{2}\Big|-\partial_t\widehat{b_j}-2iy_j\widehat{c}-\frac{1}{2}\sum_{k=1}^3iy_k\Theta_{jk}((\I-\P)\widehat{f}\cdot(1,1))+ \frac{1}{2}(\widehat{g_+}+\widehat{g_-},v_j\mu^{1/2})_{L^2_v}\Big)\\
		&=-\partial_t\sum^{3}_{j=1}\Big(iy_j\frac{\widehat{a_+}+\widehat{a_-}}{2}\Big|\widehat{b_j}\Big)+\sum^{3}_{j=1}\Big(iy_j\frac{\partial_t\widehat{a_+}+\partial_t\widehat{a_-}}{2}\Big|\widehat{b_j}\Big)\\&\qquad+\sum^{3}_{j=1}\Big(iy_j\frac{\widehat{a_+}+\widehat{a_-}}{2}\Big|-2iy_j\widehat{c}-\frac{1}{2}\sum_{k=1}^3iy_k\Theta_{jk}((\I-\P)\widehat{f}\cdot(1,1))+ \frac{1}{2}(\widehat{g_+}+\widehat{g_-},v_j\mu^{1/2})_{L^2_v}\Big)
	\end{align*}
To deal with the terms $\partial_t\widehat{a_+}+\partial_t\widehat{a_-}$, we use the equation \eqref{22}$_1$. Then by Cauchy-Schwarz inequality, we obtain 
\begin{align}\label{25}
	\partial_t\sum^{3}_{j=1}&\Re\Big(iy_j\frac{\widehat{a_+}+\widehat{a_-}}{2}\Big|\widehat{b_j}\Big)+\lambda|y|^2\Big|\frac{\widehat{a_+}+\widehat{a_-}}{2}\Big|^2\\
	&\lesssim |y\cdot\widehat{b}|^2+|y|^2|\widehat{c}|^2+|y|^2\|\zeta(\I-\P)\widehat{f}\|_{L^2_v}^2+|(\widehat{g_+}+\widehat{g_-},\zeta)_{L^2_v}|^2.\notag
\end{align} 

For the estimate of $\widehat{b}$, we use the equation \eqref{22}$_4$. 
\begin{align*}
	&\quad\,2|y|^2|\widehat{b}|^2+2|y\cdot\widehat{b}|^2\\
	&= \sum^{3}_{j,k=1}|iy_j\widehat{b_k}+iy_k\widehat{b_j}|^2\\
	&= \sum^{3}_{j,k=1}\Big(iy_j\widehat{b_k}+iy_k\widehat{b_j}\Big|-\partial_t\Big(\frac{1}{2}\Theta_{jk}((\I-\P)\widehat{f}\cdot(1,1))+2\widehat{c}\delta_{jk}\Big) + \frac{1}{2}\Theta_{jk}(\widehat{g_+}+\widehat{g_-}+\widehat{h_+}+\widehat{h_-})\Big)\\
	&=-\partial_t\sum^{3}_{j,k=1}\Big(iy_j\widehat{b_k}+iy_k\widehat{b_j}\Big|\frac{1}{2}\Theta_{jk}((\I-\P)\widehat{f}\cdot(1,1))+2\widehat{c}\delta_{jk}\Big)\\&\qquad+\sum^{3}_{j,k=1}\Big(iy_j\partial_t\widehat{b_k}+iy_k\partial_t\widehat{b_j}\Big|\frac{1}{2}\Theta_{jk}((\I-\P)\widehat{f}\cdot(1,1))+2\widehat{c}\delta_{jk}\Big)\\&\qquad+\sum^{3}_{j,k=1}\Big(iy_j\widehat{b_k}+iy_k\widehat{b_j}\Big|\frac{1}{2}\Theta_{jk}(\widehat{g_+}+\widehat{g_-}+\widehat{h_+}+\widehat{h_-})\Big)
\end{align*}
To eliminate the terms $\partial_t\widehat{b_k}$ and $\partial_t\widehat{b_j}$, we will use equation \eqref{22}$_2$. Thus, by Cauchy-Schwarz inequality, 
\begin{equation}\label{26}\begin{aligned}
		\partial_t&\sum^{3}_{j,k=1}\Re\Big(iy_j\widehat{b_k}+iy_k\widehat{b_j}\Big|\frac{1}{2}\Theta_{jk}((\I-\P)\widehat{f}\cdot(1,1))+2\widehat{c}\delta_{jk}\Big)+\lambda|y|^2|\widehat{b}|^2+2|y\cdot\widehat{b}|^2\\
		&\lesssim \delta|y|^2|\widehat{a_+}+\widehat{a_-}|^2+C_\delta|y|^2|\widehat{c}|^2+C_\delta|y|^2\|\zeta(\I-\P)\widehat{f}\|_{L^2_v}^2+|(\widehat{g_+}+\widehat{g_-}+\widehat{h_+}+\widehat{h_-},\zeta)_{L^2_v}|^2.
	\end{aligned}
\end{equation}

For the estimate of $\widehat{c}$, we use the equation \eqref{22}$_5$. 
\begin{align*}
	|y|^2|\widehat{c}|^2&=\sum^{3}_{j=1}\Big(iy_j\widehat{c}\Big|iy_j\widehat{c}\Big)\\
	&=\sum^{3}_{j=1}\Big(iy_j\widehat{c}\Big|-\frac{1}{2}\partial_t\Lambda_j((\I-\P)\widehat{f}\cdot(1,1))+ \frac{1}{2}\Lambda_j(\widehat{g_+}+\widehat{g_-}+\widehat{h_+}+\widehat{h_-})\Big)\\
	&=-\frac{1}{2}\partial_t\sum^{3}_{j=1}\Big(iy_j\widehat{c}\Big|\Lambda_j((\I-\P)\widehat{f}\cdot(1,1))\Big)+\frac{1}{2}\sum^{3}_{j=1}\Big(iy_j\partial_t\widehat{c}\Big|\Lambda_j((\I-\P)\widehat{f}\cdot(1,1))\Big)\\&\qquad+\frac{1}{2}\sum^{3}_{j=1}\Big(iy_j\widehat{c}\Big| \Lambda_j(\widehat{g_+}+\widehat{g_-}+\widehat{h_+}+\widehat{h_-})\Big).
\end{align*}
To eliminate the term $\partial_t\widehat{c}$, we use the equation \eqref{22}$_3$ to get 
\begin{equation}\label{27}\begin{aligned}
		\frac{1}{2}\partial_t&\Re\sum^{3}_{j=1}\Big(iy_j\widehat{c}\Big|\Lambda_j((\I-\P)\widehat{f}\cdot(1,1))\Big)+\lambda|y|^2|\widehat{c}|^2\\
		&\lesssim \delta|y|^2|\widehat{b}|^2 + |y|^2\|\zeta(\I-\P)\widehat{f}\|_{L^2_v}^2+|(\widehat{g_+}+\widehat{g_-}+\widehat{h_+}+\widehat{h_-},\zeta)_{L^2_v}|^2,
	\end{aligned}
\end{equation}for any $\delta>0$.

To obtain the dissipation rate, we use linear combination $\kappa_1\times\eqref{25}+\kappa_2\times\eqref{26}+\eqref{27}$ and choose $\delta<<\kappa_1<<\kappa_2$ sufficiently small to get 
\begin{equation}\label{28}\begin{aligned}
		&\quad\,\partial_t\Bigg(\kappa_1\Re\sum^{3}_{j=1}\Big(iy_j\frac{\widehat{a_+}+\widehat{a_-}}{2}\Big|\widehat{b_j}\Big)+\kappa_2\Re\sum^{3}_{j,k=1}\Big(iy_j\widehat{b_k}+iy_k\widehat{b_j}\Big|\frac{1}{2}\Re\Theta_{jk}((\I-\P)\widehat{f}\cdot(1,1))+2\widehat{c}\delta_{jk}\Big)\\&\qquad+	\frac{1}{2}\sum^{3}_{j=1}\Big(iy_j\widehat{c}\Big|\Lambda_j((\I-\P)\widehat{f}\cdot(1,1))\Big)\Bigg)+\lambda|y|^2\Big|\frac{\widehat{a_+}+\widehat{a_-}}{2}\Big|^2+\lambda|y|^2|\widehat{b}|^2+\lambda|y|^2|\widehat{c}|^2\\
		&\lesssim (1+|y|^2)\|(\tilde{a}^{1/2})^w(\I-\P)\widehat{f}\|_{L^2_v}^2+|(\widehat{g_+}+\widehat{g_-},\zeta)_{L^2_v}|^2.
	\end{aligned}
\end{equation}
Here we use the fact that 
\begin{align*}
|(\widehat{h_+}+\widehat{h_-},\zeta)_{L^2_v}|^2\lesssim (1+|y|^2)\|(\tilde{a}^{1/2})^w(\I-\P)f\|^2_{L^2_v}.
\end{align*}
which follows from the definition of $h_\pm$ and Lemma \ref{lemmaL}. Notice that $\widehat{a_+}+\widehat{a_-},\widehat{b},\widehat{c}$ have coefficient $|y|$. It means that the above estimate has one order of derivative on $x$. 
In order to obtain the dissipation property of $\widehat{a_\pm}$, we will discuss the dissipation of $\widehat{a_+}-\widehat{a_-}$, since 
\begin{align*}
	|\widehat{a_+}|^2+|\widehat{a_-}|^2=\frac{|\widehat{a_+}-\widehat{a_-}|^2}{2}+\frac{|\widehat{a_+}-\widehat{a_-}|^2}{2}.
\end{align*}
Now we observe from equation \eqref{23}$_3$ and \eqref{23}$_2$ that 
\begin{align*}
	(|y|^2+2)|\widehat{a_+}-\widehat{a_-}|^2 &= \Big(iy(\widehat{a_+}-\widehat{a_-})\Big|iy(\widehat{a_+}-\widehat{a_-})-2\widehat{E}\Big)\\
	&=\Big(iy(\widehat{a_+}-\widehat{a_-})\Big|-\partial_t\widehat{G}-iy\cdot\Theta((\I-\P)\widehat{f}\cdot(1,-1))+(v\mu^{1/2},(\widehat{g}+L\widehat{f})\cdot(1,-1))_{L^2_v}\Big)\\
	&=-\partial_t\Big(iy(\widehat{a_+}-\widehat{a_-})\Big|\widehat{G}\Big)+\Big(iy(\partial_t\widehat{a_+}-\partial_t\widehat{a_-})\Big|\widehat{G}\Big)\\&\qquad+\Big(iy(\widehat{a_+}-\widehat{a_-})\Big|-iy\cdot\Theta((\I-\P)\widehat{f}\cdot(1,-1))+(v\mu^{1/2},(\widehat{g}+L\widehat{f})\cdot(1,-1))_{L^2_v}\Big).
\end{align*}
By using the \eqref{23}$_1$, we obtain by Cauchy-Schwarz inequality that 
\begin{equation}\label{29}\begin{aligned}
		\partial_t\Re\Big(iy(\widehat{a_+}-\widehat{a_-})\Big|\widehat{G}\Big)+\lambda(|y|^2+2)|\widehat{a_+}-\widehat{a_-}|^2 &\lesssim |y\cdot\widehat{G}|^2+|y|^2\|(\tilde{a}^{1/2})\zeta(\I-\P)\widehat{f}\|_{L^2_v}^2+|(\widehat{g},\zeta)_{L^2_v}|^2\\&\lesssim |y|^2\|(\tilde{a}^{1/2})\zeta(\I-\P)\widehat{f}\|_{L^2_v}^2+|(\widehat{g},\zeta)_{L^2_v}|^2,
	\end{aligned}
\end{equation}
by using the inequality \eqref{15} and $L\in S(\tilde{a})$. Recall that $G$ is defined as $G = (v\mu^{1/2},(\I-\P)f\cdot(1,-1))_{L^2_v}$. 
Moreover, we need the dissipation rate on $\widehat{E}$. Hence, by using equation \eqref{23}$_2$, 
\begin{align*}
	2|\widehat{E}|^2 &= \big(\widehat{E}\big|2\widehat{E}\big)\\
	&= \big(\widehat{E}\big|\partial_t\widehat{G}+iy(\widehat{a_+}-\widehat{a_-})+iy\cdot\Theta((\I-\P)\widehat{f}\cdot(1,-1))-(v\mu^{1/2},(\widehat{g}+L\widehat{f})\cdot(1,-1))_{L^2_v}\big)\\
	&=\partial_t\big(\widehat{E}\big|\widehat{G}\big)-\big(\partial_t\widehat{E}\big|\widehat{G}\big)+\big(\widehat{E}\big|iy(\widehat{a_+}-\widehat{a_-})+iy\cdot\Theta((\I-\P)\widehat{f}\cdot(1,-1))-(v\mu^{1/2},(\widehat{g}+L\widehat{f})\cdot(1,-1))_{L^2_v}\big)
\end{align*}
Here by \eqref{8} and \eqref{23}$_1$, 
\begin{align*}
	\partial_tE = -\partial_t\nabla_x\phi = -\nabla_x\Delta^{-1}_x\partial_t(a_+-a_-) = -\nabla_x\Delta^{-1}_x\nabla_x\cdot G. 
\end{align*}
Thus by Fourier transform, 
\begin{align*}
	|\partial_t\widehat{E}|^2\lesssim |\widehat{G}|^2\lesssim \|\mu^{1/2}(\I-\P)\widehat{f}\|^2_{L^2}. 
\end{align*}
Plugging this into the above estimate, we have,
\begin{align}\label{30}
	\partial_t\Re\big(-\widehat{E}\big|\widehat{G}\big)+\lambda|\widehat{E}|^2
	&\lesssim|y|^2|\widehat{a_+}-\widehat{a_-}|^2+ (1+|y|^2)\|(\tilde{a}^{1/2})^w(\I-\P)f\|^2_{L^2_v}+|(\widehat{g},\zeta)_{L^2_v}|^2,
\end{align}where $\zeta\in S(\tilde{a}^{1/2})$ and Lemma \ref{inverse_bounded_lemma} are applied. 
Taking the combination $\eqref{29} +\kappa_3\times \eqref{30}$, we have 
\begin{align}\label{31}
	\partial_t\big(iy(\widehat{a_+}-\widehat{a_-})-\kappa_3\widehat{E}\big|\widehat{G}\big)+\lambda(1+|y|^2)|\widehat{a_+}-\widehat{a_-}|^2 +\lambda|\widehat{E}|^2
	&\lesssim (1+|y|^2)\|(\tilde{a}^{1/2})\zeta(\I-\P)\widehat{f}\|_{L^2_v}^2+|(\widehat{g},\zeta)_{L^2_v}|^2.
\end{align}
Now we take the integral on \eqref{28} and \eqref{31} with respect to $y$, use the Plancherel's Theorem and sum this two inequality together. Then,
\begin{align*}
	\partial_t\E^{(1)}_{1}+\lambda\|\nabla_x(a_\pm,b,c)\|^2_{L^2_x}+\lambda\|a_+-a_-\|^2_{L^2_x}+\lambda\|E\|^2_{L^2_x}\lesssim \|(\tilde{a}^{1/2})\zeta(\I-\P)\widehat{f}\|_{L^2_vH^1_x}^2+|(\widehat{g},\zeta)_{L^2_v}|^2,
\end{align*}
where 
\begin{align*}
	\E^{(1)}_{1} &=\kappa_1\sum^{3}_{j=1}\Re\Big(\nabla_x\frac{{a_+}+{a_-}}{2},b\Big)_{L^2_x}+\kappa_2\sum^{3}_{j,k=1}\Re\Big(\partial_j{b_k}+\partial_k{b_j},\frac{1}{2}\Theta_{jk}((\II-\PP){f}\cdot(1,1))+2{c}\delta_{jk}\Big)_{L^2_x}\\&\qquad+	\frac{1}{2}\Re\sum^{3}_{j=1}\Big(\partial_j{c},\Lambda_j((\II-\PP){f}\cdot(1,1))\Big)_{L^2_x}+\Re\big(\nabla_x({a_+}-{a_-})-\kappa_3{E},{G}\big)_{L^2_x}.
\end{align*}
As we mentioned at the beginning of this proof, the equations \eqref{19} and \eqref{20} are linear in $a_\pm,b,c,f$, one can directly apply the derivative $\partial^\alpha$ to \eqref{19} and \eqref{20} with $|\alpha|\le K-1$. Then we can get the high-order estimate. For any $m\ge0$, we define 
\begin{align*}
	&\E^{(1)}_{K}
	=\sum_{|\alpha|\le K-1}\bigg(\kappa_1\sum^{3}_{j=1}\Big(\nabla_x\frac{{\partial^\alpha a_+}+{\partial^\alpha a_-}}{2},\partial^\alpha b\Big)_{L^2_x}\\
	&+\kappa_2\sum^{3}_{j,k=1}\Re\Big(\partial^\alpha\partial_j{b_k}+\partial^\alpha\partial_k{b_j},\frac{1}{2}\Theta_{jk}((\II-\PP)\partial^\alpha{f}\cdot(1,1))+2\partial^\alpha{c}\delta_{jk}\Big)_{L^2_x}\\
	&+	\frac{1}{2}\sum^{3}_{j=1}\Big(\partial^\alpha\partial_j{c},\Lambda_j((\II-\PP)\partial^\alpha{f}\cdot(1,1))\Big)_{L^2_x}\\
	&+\big(\nabla_x(\partial^\alpha{a_+}-\partial^\alpha{a_-})-\kappa_3{\partial^\alpha E},\partial^\alpha{G}\big)_{L^2_x}\bigg).
\end{align*}
Then, 
\begin{align*}
	\partial_t&\E^{(1)}_{K} + \lambda\sum_{|\alpha|\le K-1} \|\partial^\alpha\nabla_x(a_\pm,b,c)\|^2_{L^2_x}+\lambda\|a_+-a_-\|^2_{L^2_x}+\lambda\sum_{|\alpha|\le K-1}\|\partial^\alpha E\|^2_{L^2_x}\\
	&\qquad\lesssim\sum_{|\alpha|\le K} \|(\tilde{a}^{1/2})^w(\I-\P)\partial^{\alpha}\widehat{f}\|_{L^2_{v,x}}^2+\sum_{|\alpha|\le K-1}\|(\partial^{\alpha}g,\zeta)_{L^2_v}\|^2_{L^2_x}.
\end{align*}
Also, $\E^{(1)}_{K}\lesssim \sum_{|\alpha|\le K}\|\partial^\alpha f\|^2_{L^2_{x,v}}+\sum_{|\alpha|\le K-1}\|\partial^\alpha E\|^2_{L^2_x}$ can be easily verified by direct calculation. 
Finally, we only need to estimate $\|(\partial^{\alpha}g,\zeta)_{L^2_v}\|^2_{L^2_x}$ for $|\alpha|\le K-1$. 
By Lemma \ref{lemmag}, 
\begin{align*}
	\|(\partial^{\alpha}g,\zeta)_{L^2_v}\|_{L^2_x}^2&\lesssim \|(\partial^{\alpha}(\pm\nabla_x\phi\cdot\nabla_vf_\pm\mp\frac{1}{2}\nabla_x\phi\cdot vf_\pm+\Gamma_\pm(f,f)),\zeta)_{L^2_v}\|_{L^2_x}^2\\
	&\lesssim \E_{K,l}(t)\D_{K,l}(t). 
\end{align*}
This completes the proof of \eqref{24}. The proof of \eqref{24a} is similar, which is by directly applying the derivative $\partial^\alpha$ to \eqref{19}\eqref{21}$_1$ with $1\le|\alpha|\le K-1$ instead of $|\alpha|\le K-1$. On the other hand, we still apply $\partial^\alpha$ to \eqref{21}$_2$ with $|\alpha|\le K-1$. Then we will obtain \eqref{24a}.
\end{proof}

Consider the homogeneous linearized system 
\begin{equation}\label{39}
	\left\{\begin{aligned}
		&\partial_tf_\pm +v\cdot\nabla_x f_\pm\pm \mu^{1/2}v\cdot\nabla_x\phi + Lf_\pm = 0,\\
		&-\Delta_x\phi = \int_{\R^3}(f_+-f_-)\mu^{1/2}\,dv,\quad \phi\to 0\text{ as }|x|\to\infty,\\
		&f_\pm|_{t=0}=f_{0,\pm},
	\end{aligned} \right.
\end{equation}which is \eqref{7}-\eqref{9} with $g_\pm=0$ in \eqref{22a}. We write the formal solution to Cauchy problem \eqref{39} to be 
\begin{equation}\label{42b}
	f = e^{tB}f_0,
\end{equation}where $e^{tB}$ denotes the solution operator. For later use, we will analyze the large time behavior of system \eqref{39}. The idea here follows from \cite{Strain2012}.
\begin{Thm}\label{homogen}
	Let $f= e^{tB}f_0$ be the solution to \eqref{39}, $m\ge 0$ be an integer and time decay rate index to be 
	\begin{equation*}
		\sigma_m = \frac{3}{4}+\frac{m}{2}.
	\end{equation*}
Then for $l\ge 0$, $t\ge 0$, 
\begin{equation}
	\|w^l\nabla^m_xf(t)\|_{L^2_{v,x}}+\|\nabla_x^mE(t)\|_{L^2_x} \lesssim (1+t)^{-\sigma_m}\big(\|w^lf_0\|_{Z_1}+\|E_0\|_{L_1}+\|w^l\nabla_x^mf_0\|_{L^2_{v,x}}\big).
\end{equation}
\end{Thm}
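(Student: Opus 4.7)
}

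The plan is to work in Fourier space in $x$ and derive a pointwise-in-frequency Lyapunov estimate of the form $\partial_t \mathcal{L}(t,y) + \lambda \rho(y)\mathcal{L}(t,y) \le 0$, with dissipation rate $\rho(y) = |y|^2/(1+|y|^2)$, and then convert it into polynomial decay in $t$ by splitting the frequency integral into low and high frequencies, using the $Z_1$ and $L^1_x$ hypotheses at low frequencies. First I would take the Fourier transform in $x$ of \eqref{39} to obtain an ODE system for $\widehat{f}_\pm(t,y,v)$ and $\widehat{E}(t,y)$ parameterized by $y \in \R^3$. A standard weighted $L^2_v$ energy estimate on this system, combined with the dissipation of $L$ from Lemma \ref{lemmaL}(ii), yields
\begin{equation*}
\tfrac{1}{2}\partial_t\bigl(\|w^l\widehat{f}\|_{L^2_v}^2 + |\widehat{E}|^2\bigr) + \lambda\|(\tilde a^{1/2})^w w^l(\I-\P)\widehat{f}\|_{L^2_v}^2 \lesssim \text{coupling terms involving } \widehat{a}_\pm,\widehat{b},\widehat{c},\widehat{E}.
\end{equation*}

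Next I would apply the macroscopic machinery of Lemma \ref{Lemma31} in Fourier form: the identities \eqref{22}--\eqref{23} are linear and (for the homogeneous system) have $g_\pm \equiv 0$, so the pointwise-in-$y$ analog of \eqref{28} and \eqref{31} yields an auxiliary functional $\mathcal{I}(t,y)$, bounded by $\|\widehat{f}(t,y)\|_{L^2_v}^2 + |\widehat{E}(t,y)|^2$, whose time derivative absorbs a dissipation term $\lambda|y|^2 (|\widehat{a}_\pm|^2 + |\widehat{b}|^2 + |\widehat{c}|^2)$ and $\lambda|\widehat{E}|^2$ modulo $(1+|y|^2)\|(\tilde a^{1/2})^w(\I-\P)\widehat{f}\|_{L^2_v}^2$. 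A suitable linear combination $\mathcal{L}(t,y) = \|w^l\widehat{f}\|_{L^2_v}^2 + |\widehat{E}|^2 + \kappa\,\mathcal{I}(t,y)/(1+|y|^2)$, with $\kappa$ small, then satisfies $\mathcal{L} \approx \|w^l\widehat{f}\|_{L^2_v}^2 + |\widehat{E}|^2$ and
\begin{equation*}
\partial_t \mathcal{L}(t,y) + \lambda\,\frac{|y|^2}{1+|y|^2}\,\mathcal{L}(t,y) \le 0,
\end{equation*}
since the macroscopic dissipation rate carries the factor $|y|^2/(1+|y|^2)$ after division, while the microscopic part provides $\lambda\|(\tilde a^{1/2})^w(\I-\P)\widehat f\|_{L^2_v}^2$ uniformly in $y$. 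Gronwall's inequality then gives $\mathcal{L}(t,y) \le e^{-\lambda t|y|^2/(1+|y|^2)}\mathcal{L}(0,y)$.

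To extract the $(1+t)^{-\sigma_m}$ rate, I would multiply by $|y|^{2m}$ and integrate in $y$, splitting into $|y|\le 1$ and $|y|\ge 1$. In the low-frequency regime $|y|\le 1$, I use $\mathcal{L}(0,y)\lesssim \|\widehat{w^lf_0}(y)\|_{L^2_v}^2 + |\widehat{E}_0(y)|^2 \le (\|w^lf_0\|_{Z_1} + \|E_0\|_{L^1_x})^2$ uniformly in $y$, so that
\begin{equation*}
\int_{|y|\le 1}|y|^{2m} e^{-\lambda t|y|^2}\,dy \lesssim (1+t)^{-3/2-m}
\end{equation*}
delivers the stated rate. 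In the high-frequency regime $|y|\ge 1$, the factor $e^{-\lambda t|y|^2/(1+|y|^2)}$ becomes $e^{-\lambda t/2}$, which is absorbed into any polynomial rate, and $\mathcal L(0,y)$ is integrated against $|y|^{2m}$ using the $\|w^l\nabla_x^m f_0\|_{L^2_{v,x}}$ datum via Plancherel.

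The main obstacle will be the construction of the Lyapunov functional $\mathcal L$: ensuring that the Poisson coupling $\pm \mu^{1/2}v\cdot\nabla_x\phi$ does not spoil the dissipation, and that the time-derivative of the cross terms in $\mathcal{I}(t,y)$ (which contain $\partial_t \widehat{a}_\pm$, $\partial_t \widehat{b}$, $\partial_t \widehat c$, $\partial_t\widehat{G}$) is controlled only by $(\I-\P)\widehat{f}$ and the macroscopic dissipation, with no uncontrolled contribution from $\widehat E$. This is handled by the particular combination in Lemma \ref{Lemma31} and the observation $\partial_t\widehat E = -iy(iy)^{-2}(iy\cdot\widehat G)$, which is bounded by $\|(\I-\P)\widehat{f}\|_{L^2_v}$. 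The weighted extension to $w^l$ uses Lemma \ref{lemmaL}(ii) to absorb a small $L^2_v(B_C)$ remainder into $\mathcal{L}$ itself, closing the estimate.
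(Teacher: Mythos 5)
Your strategy is the one the paper actually uses: a pointwise-in-frequency Lyapunov estimate $\partial_t\E^{(2)}_l(t,y)+\lambda\frac{|y|^2}{1+|y|^2}\E^{(2)}_l(t,y)\le 0$ (built from a weighted microscopic estimate, the $(1+|y|^2)^{-1}$-scaled macroscopic dissipation from Lemma~\ref{Lemma31}, and finally the hard-potential bound $\|\cdot\|_{L^2_v}\lesssim\|(\tilde a^{1/2})^w\cdot\|_{L^2_v}$ to close), followed by Gronwall and a low/high frequency split with $Z_1$ and $L^1_x$ controlling the initial data uniformly at $|y|\le 1$ and Plancherel at $|y|\ge 1$. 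Two places in your sketch need to be filled in to match what the paper does.

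First, your single combination $\mathcal L=\|w^l\widehat f\|^2_{L^2_v}+|\widehat E|^2+\kappa\,\mathcal I/(1+|y|^2)$ does not close at low frequency as written. The weighted $L^2_v$ energy estimate you would add produces a remainder of order $\|\widehat f\|^2_{L^2_v(B_C)}+|\widehat E|^2$ (from Lemma~\ref{lemmaL}(ii) and the Poisson coupling) with a fixed constant and no $|y|^2$ gain, whereas the dissipation on these quantities coming from $\mathcal I/(1+|y|^2)$ is modulated by $|y|^2/(1+|y|^2)$, which degenerates as $y\to 0$. The paper instead constructs $\E^{(2)}_l$ with an explicit frequency cutoff: on $|y|\le 1$ it uses the weighted estimate on the microscopic component $(\I-\P)\widehat f$, whose transport source $iv\cdot y\,\P\widehat f$ gives the remainder a compensating $|y|^2$ factor, while on $|y|\ge 1$ it uses the full weighted estimate, where $|y|^2/(1+|y|^2)\gtrsim 1$ so nothing degenerates. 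You should make this two-regime construction explicit.

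Second, your high-frequency step integrates $|y|^{2m}\mathcal L(0,y)$, which contains $|y|^{2m}|\widehat E_0(y)|^2$, but the stated right-hand side of the theorem carries no $\|\nabla^m_xE_0\|_{L^2_x}$ datum, so this must be reduced to the $f_0$ data. This needs the Poisson equation: $|y|^2|\widehat\phi_0(y)|\lesssim\|\widehat f_0(y)\|_{L^2_v}$ gives $|\widehat E_0(y)|\lesssim|y|^{-1}\|\widehat f_0(y)\|_{L^2_v}$, so for $|y|\ge 1$ one has $\||y|^m\widehat E_0\|_{L^2_y(|y|\ge 1)}\lesssim\||y|^m\widehat f_0\|_{L^2_y(|y|\ge 1)}\lesssim\|w^l\nabla^m_xf_0\|_{L^2_{v,x}}$. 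This is exactly the observation the paper inserts before applying Plancherel.
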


Before proving this result, we shall need the following lemma. 
\begin{Lem}
	Let $f$ be the solution to \eqref{39}. Then the followings are valid. 
	
	(1) There exists a time-frequency interactive functional $\E^{(2)}$ such that 
	\begin{equation*}
		\E^{(2)} \approx \|\widehat{f}\|_{L^2_v}^2+|\widehat{E}|^2,
	\end{equation*}and for $t\ge 0$, $y\in\R^3$, 
	\begin{equation}\label{44a}
		\partial_t\E^{(2)}(t,y)+\frac{\lambda|y|^2}{1+|y|^2}(\|(\tilde{a}^{1/2})^w\widehat{f}\|^2_{L^2_{v}}+|E|^2)\le 0. 
	\end{equation}

(2)
There exists a time-frequency interactive functional $\E^{(2)}_l$ such that 
\begin{align}\label{444}
	\E^{(2)}_l \approx \|w^l\widehat{f}\|_{L^2_v}^2+|\widehat{E}|^2,
\end{align}and for $t\ge 0$, $y\in\R^3$, 
\begin{align}\label{45aa}
	\partial_t\E^{(2)}_l(t,y)+\frac{\lambda|y|^2}{1+|y|^2}(\|(\tilde{a}^{1/2})^ww^l\widehat{f}\|^2_{L^2_v}+|\widehat{E}|^2)\le 0. 
\end{align}
\end{Lem}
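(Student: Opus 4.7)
The plan is to adapt the two-stage strategy from \cite{Strain2012}: first derive a basic $L^2$ energy identity on the Fourier side, then construct a pointwise-in-$y$ analogue of the macroscopic interactive functional from Lemma \ref{Lemma31}, and finally combine them divided by $1+|y|^2$ so as to produce a self-consistent dissipation rate.

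\textbf{Step 1: Basic Fourier identity.} Taking the $L^2_v$-inner product of the Fourier transform of \eqref{39} with $\widehat{f}_\pm$, summing over $\pm$ and taking real parts, the transport term $iy\cdot v$ vanishes by skew-adjointness while the collision term is handled by Lemma \ref{lemmaL}(i). The electrostatic coupling $\pm(\mu^{1/2}v\cdot iy\widehat{\phi},\widehat{f}_\pm)_{L^2_v}$ sums across species to $-\Re(\widehat{E}\cdot \widehat{G})$ (with $G$ as in \eqref{27aa}), which combines with $\tfrac{1}{2}\partial_t|\widehat{E}|^2$ via the continuity relation $\partial_t(\widehat{a}_+-\widehat{a}_-)+iy\cdot\widehat{G}=0$ together with the Poisson identity $iy\cdot\widehat{E}=\widehat{a}_+-\widehat{a}_-$. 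The outcome is
\begin{equation*}
\partial_t\bigl(\|\widehat{f}\|^2_{L^2_v}+|\widehat{E}|^2\bigr) + \lambda\|(\tilde{a}^{1/2})^w(\I-\P)\widehat{f}\|^2_{L^2_v}\le 0.
\end{equation*}

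\textbf{Step 2: Fourier macroscopic interactive functional.} Setting $g_\pm=0$ in the local conservation laws \eqref{22}-\eqref{23} (which are precisely the Fourier transforms of \eqref{19}-\eqref{21} driven by the linear solution), I repeat pointwise in $y$ the construction of $\mathcal{E}^{(1)}_K$ from the proof of Lemma \ref{Lemma31}. This produces $\widehat{\mathcal{E}}^{\mathrm{int}}(t,y)$ of size $|\widehat{\mathcal{E}}^{\mathrm{int}}(t,y)|\lesssim(1+|y|^2)(|\widehat{\P f}|^2+|\widehat{E}|^2)+\|(\I-\P)\widehat{f}\|^2_{L^2_v}$ and satisfying
\begin{equation*}
\partial_t\widehat{\mathcal{E}}^{\mathrm{int}}+\lambda|y|^2\bigl(|\widehat{a}_\pm|^2+|\widehat{b}|^2+|\widehat{c}|^2\bigr)+\lambda|\widehat{E}|^2\lesssim(1+|y|^2)\|(\tilde{a}^{1/2})^w(\I-\P)\widehat{f}\|^2_{L^2_v}.
\end{equation*}

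\textbf{Step 3: Combination.} I define
\begin{equation*}
\mathcal{E}^{(2)}(t,y):=K\bigl(\|\widehat{f}\|^2_{L^2_v}+|\widehat{E}|^2\bigr)+\frac{1}{1+|y|^2}\widehat{\mathcal{E}}^{\mathrm{int}}(t,y)
\end{equation*}
with $K$ sufficiently large. Dividing by $1+|y|^2$ renders the correction uniformly controlled by $\|\widehat{f}\|^2_{L^2_v}+|\widehat{E}|^2$, so $\mathcal{E}^{(2)}\approx\|\widehat{f}\|^2_{L^2_v}+|\widehat{E}|^2$. Adding $K$ times Step 1 to $(1+|y|^2)^{-1}$ times Step 2 absorbs the microscopic term on the right-hand side, producing a combined dissipation
\begin{equation*}
\lambda\|(\tilde{a}^{1/2})^w(\I-\P)\widehat{f}\|^2_{L^2_v}+\frac{\lambda|y|^2}{1+|y|^2}|\widehat{\P f}|^2+\frac{\lambda}{1+|y|^2}|\widehat{E}|^2.
\end{equation*}
Using $\|(\tilde{a}^{1/2})^w\P\widehat{f}\|^2\lesssim|\widehat{\P f}|^2$ (since $\P\widehat{f}$ is Gaussian in $v$), the identity $|\widehat{E}|^2=|\widehat{a_+}-\widehat{a_-}|^2\le|\widehat{\P f}|^2$, and the algebraic fact $|y|^2(1+|y|^2)^{-1}+(1+|y|^2)^{-1}=1$, this lower-bounds the desired rate $\tfrac{\lambda|y|^2}{1+|y|^2}(\|(\tilde{a}^{1/2})^w\widehat{f}\|^2_{L^2_v}+|\widehat{E}|^2)$, giving \eqref{44a}.

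\textbf{Step 4: Weighted version.} I apply the same inner product but with weight $w^{2l}$. The transport term still vanishes; Lemma \ref{lemmaL}(ii) produces $\lambda\|(\tilde{a}^{1/2})^w w^l\widehat{f}\|^2_{L^2_v}-C\|\widehat{f}\|^2_{L^2_v(B_C)}$ from the collision term; and the weighted electrostatic coupling is bounded by Cauchy-Schwarz since $w^{2l}\mu^{1/2}v$ remains rapidly decreasing, yielding a contribution $\lesssim\delta\|\widehat{f}\|^2_{L^2_v}+C_\delta|\widehat{E}|^2$. Setting
\begin{equation*}
\mathcal{E}^{(2)}_l(t,y):=M\mathcal{E}^{(2)}(t,y)+\|w^l\widehat{f}\|^2_{L^2_v}
\end{equation*}
with $M$ large, the equivalence \eqref{444} follows from $\|\widehat{f}\|^2\le\|w^l\widehat{f}\|^2$. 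The ball term is split as $\|\widehat{f}\|^2_{B_C}\le|\widehat{\P f}|^2+\|(\tilde{a}^{1/2})^w(\I-\P)\widehat{f}\|^2_{L^2_v}$ and absorbed by $M$ times the dissipation of $\mathcal{E}^{(2)}$, using the full $|\widehat{E}|^2$ and $|\widehat{\P f}|^2$ dissipation assembled in Step 3.

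\textbf{Main obstacle.} The principal technical point is choosing the $(1+|y|^2)^{-1}$ normalization of the interactive functional so that (a) its size remains bounded by the leading $L^2$ energy uniformly in $y$ and (b) the dissipation it contributes combines cleanly with the $(1+|y|^2)^{-1}$-scaled basic dissipation into the single rate $|y|^2(1+|y|^2)^{-1}$ required for the optimal time decay $(1+t)^{-\sigma_m}$ in Theorem \ref{homogen}. The weighted step adds the complication that the $\P$-part is not dissipated at full strength for small $|y|$, which is resolved by using the Poisson-derived bound $|\widehat{E}|^2\le|\widehat{\P f}|^2$ to route the missing dissipation through the macroscopic piece.
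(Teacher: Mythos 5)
Your Steps 1--3 reproduce the paper's argument for part (1): take the $L^2_v$ energy identity for $\widehat f$ (whose electrostatic coupling produces $\tfrac12\partial_t|\widehat E|^2$), build a Fourier-side macroscopic interactive functional from the moment equations with $g_\pm=0$, and combine with the $(1+|y|^2)^{-1}$ normalisation. One algebraic slip: from $iy\cdot\widehat E=\widehat{a_+}-\widehat{a_-}$ and $\widehat E\parallel y$ you get $|y|^2|\widehat E|^2=|\widehat{a_+}-\widehat{a_-}|^2$, not ``$|\widehat{E}|^2=|\widehat{a_+}-\widehat{a_-}|^2$''; the routing of the $|\widehat E|^2$ piece through $|\widehat{\P f}|^2$ therefore only works for $|y|\gtrsim 1$ and you need the $\tfrac{1}{1+|y|^2}|\widehat E|^2$ piece for $|y|\lesssim 1$, which you do have, so the conclusion of Step 3 stands once this is fixed.

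The genuine gap is in Step 4. You write the weighted energy identity for the full $\widehat f$, invoke Lemma 2.1(ii) to produce $\lambda\|(\tilde a^{1/2})^w w^l\widehat f\|^2_{L^2_v}-C\|\widehat f\|^2_{L^2_v(B_C)}$, and then claim the ball term is ``absorbed by $M$ times the dissipation of $\mathcal E^{(2)}$, using the full $|\widehat E|^2$ and $|\widehat{\P f}|^2$ dissipation assembled in Step 3.'' But Step 3 does \emph{not} produce full-strength dissipation of $|\widehat{\P f}|^2$: the macroscopic component is dissipated only at rate $\tfrac{|y|^2}{1+|y|^2}$, which tends to zero as $|y|\to 0$. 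Since $\|\widehat f\|^2_{L^2_v(B_C)}\gtrsim|\widehat{\P f}|^2$ carries full strength, no choice of $M$ (independent of $y$) can absorb it uniformly for small $|y|$. The paper handles exactly this obstruction by a frequency split: for $|y|\le 1$ it estimates the microscopic equation for $(\II-\PP)\widehat f$ instead of the full $\widehat f$, so the ball term becomes $\|(\I-\P)\widehat f\|^2_{L^2(B_C)}\lesssim\|(\tilde a^{1/2})^w(\I-\P)\widehat f\|^2$ which \emph{is} fully dissipated, and the remaining sources ($|\widehat E|^2$ and the transport commutators $\lesssim|y|^2\|(\tilde a^{1/2})^w\widehat f\|^2$) are either controlled by the dissipation of $\E^{(2)}$ or already carry the small factor $|y|^2$; for $|y|\ge 1$ it uses the full-$\widehat f$ estimate as you do, which is then fine because $\tfrac{|y|^2}{1+|y|^2}\ge\tfrac12$. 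You need to introduce this $\chi_{|y|\le1}/\chi_{|y|\ge1}$ split and replace your single weighted identity by the microscopic one in the low-frequency regime; the functional should then be $\E^{(2)}_l=\E^{(2)}+\kappa\bigl(\|w^l(\I-\P)\widehat f\|^2\chi_{|y|\le1}+\|w^l\widehat f\|^2\chi_{|y|\ge1}\bigr)$ rather than $M\E^{(2)}+\|w^l\widehat f\|^2$.
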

\begin{proof}
	Using the calculation from Lemma \ref{Lemma31} with $g_\pm=0$ therein, we apply the combination $\frac{\eqref{28}}{1+|y|^2}+\frac{\eqref{29}}{1+|y|^2}+\frac{|y|^2\eqref{30}}{1+|y|^2}$ to get 
	\begin{align}\label{44}
		\partial_t\E^{(2)}_{int}+\lambda\frac{|y|^2}{1+|y|^2}\big(|{\widehat{a_+}+\widehat{a_-}}|^2+|\widehat{b}|^2+|\widehat{c}|^2+|\widehat{a_+}-\widehat{a_-}|^2+|\widehat{E}|^2\big)
		&\lesssim \|(\tilde{a}^{1/2})^w(\I-\P)\widehat{f}\|_{L^2_v}^2.
	\end{align}
where
\begin{align*}
	\E^{(2)}_{int}&=\Bigg(\frac{\kappa_1}{1+|y|^2}\Re\sum^{3}_{j=1}\Big(iy_j\frac{\widehat{a_+}+\widehat{a_-}}{2}\Big|\widehat{b_j}\Big)+	\frac{1}{2(1+|y|^2)}\Re\sum^{3}_{j=1}\Big(iy_j\widehat{c}\Big|\Lambda_j((\I-\P)\widehat{f}\cdot(1,1))\Big)\\&\qquad+\frac{\kappa_2}{1+|y|^2}\Re\sum^{3}_{j,k=1}\Big(iy_j\widehat{b_k}+iy_k\widehat{b_j}\Big|\frac{1}{2}\Theta_{jk}((\I-\P)\widehat{f}\cdot(1,1))+2\widehat{c}\delta_{jk}\Big)\\
	&\qquad+
	\frac{1}{1+|y|^2}\Re\Big(iy(\widehat{a_+}-\widehat{a_-})\Big|\widehat{G}\Big)+\frac{\kappa_3|y|^2}{1+|y|^2}\Re\big(-\widehat{E}\big|\widehat{G}\big)\Bigg).
\end{align*}
In order to eliminate the right-hand term of \eqref{44} and obtain the $\|\widehat{f}\|_{L^2_v}^2$ on the left hand side, we take the Fourier transform of \eqref{39} over $x$ and take the inner product with $f_\pm$ over $\R^3_v$. Summing on $\pm$ and taking the real part, we have 
\begin{equation*}
	\frac{1}{2}\partial_t\|\widehat{f}\|^2_{L^2_v}+ \Re(\mu^{1/2}iv\cdot y\widehat{\phi},\widehat{f}_+-\widehat{f}_-)_{L^2_v} + \sum_{\pm}(L\widehat{f}_\pm,\widehat{f}_\pm)_{L^2_v} = 0.
\end{equation*}
Recall the definition \eqref{27aa} and using \eqref{21}$_1$, we have 
\begin{equation*}
	\frac{1}{2}\partial_t|\widehat{E}|^2=\Re(\frac{iy}{|y|^2}y\cdot\widehat{G}|y\widehat{\phi})
	=\Re(i\widehat{G}|y\widehat{\phi}) = \Re(\mu^{1/2}iv\cdot y\widehat{\phi},f_+-f_-)_{L^2_v}.
\end{equation*}
On the other hand, by Lemma \ref{lemmaL}, we have 
\begin{align*}
	\sum_{\pm}(L\widehat{f}_\pm,\widehat{f}_\pm)_{L^2_v}\ge \lambda\|(\tilde{a}^{1/2})^w(\I-\P)\widehat{f}\|^2_{L^2_{v}}.
\end{align*}
Thus, 
\begin{equation}\label{45a}
	\frac{1}{2}\partial_t\big(\|\widehat{f}\|^2_{L^2_v}+|\widehat{E}|^2)+\lambda\|(\tilde{a}^{1/2})^w(\I-\P)\widehat{f}\|^2_{L^2_{v}}\le 0.
\end{equation}
Taking the combination $\kappa\times\eqref{44}+\eqref{45a}$ with $\kappa<<1$, we have 
\begin{equation*}
	\partial_t\E^{(2)} + \frac{\lambda|y|^2}{1+|y|^2}(\|(\tilde{a}^{1/2})^w(\I-\P)f\|^2_{L^2_{v}}+|\widehat{E}|^2)\le 0. 
\end{equation*}
where $\E^{(2)} = \kappa\E^{(2)}_{int} + \|f\|^2_{L^2_v}+|\widehat{E}|^2$. It's direct to check that $\E^{(2)}
\approx \|f\|^2_{L^2_v}+|\widehat{E}|^2$ by using $\kappa<<1$.

In order to obtain \eqref{45aa}, we write \eqref{39}$_1$ to be 
\begin{align*}
	\partial_t(\II&-\PP)\widehat{f} + iv\cdot y(\II-\PP)\widehat{f} + L(\II-\PP)\widehat{f} \\
	&= \mp(\II-\PP)(\widehat{E}\cdot v)\mu^{1/2} - (\II-\PP)(iv\cdot y\P\widehat{f}) -\PP(iv\cdot y(\I-\P)\widehat{f}). 
\end{align*}
Taking inner product with $w^{2l}(\I-\P)\widehat{f}$ over $\R^3_v$ and using Lemma \ref{lemmaL}, one has 
\begin{align}\label{48aaa}
	\partial_t\|w^l(\I-\P)\widehat{f}\|^2_{L^2_v}+\lambda \|(\tilde{a}^{1/2})^ww^l(\I-\P)\widehat{f}\|^2_{L^2_v}\lesssim |\widehat{E}|^2+|y|^2\|(\tilde{a}^{1/2})^w\widehat{f}\|^2_{L^2_v}.
\end{align}
Using a similar argument without taking projection $\II-\PP$, one has 
\begin{align}\label{48aab}
	\partial_t\|w^l\widehat{f}\|^2_{L^2_v}+\lambda \|(\tilde{a}^{1/2})^ww^l\widehat{f}\|^2_{L^2_v}\lesssim |\widehat{E}|^2+\|\widehat{f}\|^2_{L^2(B_C)}.
\end{align}
Taking combination $\eqref{44a}+\kappa(\eqref{48aaa}\chi_{|y|\le 1}+\eqref{48aab}\chi_{|y|\ge 1})$ with $\kappa<<1$, one has 
\begin{align*}
	\partial_t\E^{(2)}_l(t,y)+\frac{\lambda|y|^2}{1+|y|^2}\big(\|(\tilde{a}^{1/2})^w\widehat{f}\|^2_{L^2_v}+|\widehat{E}|^2\big)\le 0,
\end{align*}
where 
\begin{align*}
	\E^{(2)}_l(t,y) = \E^{(2)}(t,y) + \kappa(\|w^l(\I-\P)\widehat{f}\|^2_{L^2_v}\chi_{|y|\le 1}+\|w^l\widehat{f}\|^2_{L^2_v}\chi_{|y|\ge 1}).
\end{align*}
It's direct to compute \eqref{444} since $\kappa$ is suitably small. 
\end{proof}

Now we are in a position to prove the large time behavior of the homogeneous system \eqref{39}. 
\begin{proof}[Proof of Theorem \ref{homogen}]
By noticing $\|\cdot\|_{L^2_v}\lesssim \|(\tilde{a}^{1/2})^w(\cdot)\|_{L^2_v}$ for hand potential, \eqref{45aa} gives that 
\begin{align*}
	\partial_t\E^{(2)}_l(t,y)+\frac{\lambda|y|^2}{1+|y|^2}\E^{(2)}_l(t,y)\le 0. 
\end{align*}
Then by solving this ODE, we have 
\begin{align*}
	\E^{(2)}_l(t,y)\lesssim e^{-\frac{\lambda|y|^2t}{1+|y|^2}}\E^{(2)}_l(0,y).
\end{align*}
By using \eqref{444}, 
\begin{align}\label{51}
	\|\nabla^m_xw^lf\|^2_{L^2_{v,x}}&+\|\nabla^m_xE\|^2_{L^2_x}\approx \int_{\R^3}|y|^{2m}\E^{(2)}_l(t,y)\,dy\\
	&\lesssim \int_{|y|\le 1}|y|^{2m}e^{-\lambda|y|^2t}\E^{(2)}_l(0,y)\,dy+e^{-\lambda t}\int_{|y|\ge 1}|y|^{2m}\E^{(2)}_l(0,y)\,dy.\notag
\end{align}
By H\"{o}lder's inequality and scaling on $y$, one has 
\begin{align*}
	\int_{|y|\le 1}|y|^{2m}e^{-\lambda|y|^2t}\E^{(2)}_l(0,y)\,dy
	&\lesssim \min\{1,t^{-3/2-m}\}\|\E^{(2)}_l(0,y)\|_{L^\infty_y}.
\end{align*}
For the case $|y|\ge 1$, noticing \eqref{8}, we have 
\begin{align*}
	\||y|^m\widehat{E_0}\|_{L^2_y(|y|\ge 1)}\lesssim \||y|^{m-1}\widehat{f_0}\|_{L^2_y(|y|\ge 1)}\lesssim \||y|^{m}\widehat{f_0}\|_{L^2_y(|y|\ge 1)},
\end{align*}
which yields that 
\begin{align*}
	e^{-\lambda t}\int_{|y|\ge 1}|y|^{2m}\E^{(2)}_l(0,y)\,dy\lesssim e^{-\lambda t}\|w^l\nabla^m_xf_0\|^2_{L^2_{v,x}}.
\end{align*}
Thus, \eqref{51} becomes 
\begin{align*}
	\|\nabla^m_xw^lf\|^2_{L^2_{v,x}}+\|\nabla^m_xE\|^2_{L^2_x}\lesssim (1+t)^{-3/2-m}\big(\|w^lf_0\|^2_{Z_1}+\|E_0\|^2_{L_1}+\|w^l\nabla^m_xf_0\|^2_{L^2_{v,x}})
\end{align*}
This completes the proof. 
\end{proof}

\section{Global Existence}\label{sec4}
In this section, we are going to prove the main Theorem \ref{main1}, the global-in-time existence of the solution to the following system. 
\begin{equation}\label{16?}
\left\{\begin{aligned}
	&\partial_tf_\pm + v_i\partial^{e_i}f_\pm \pm \frac{1}{2}\partial^{e_i}\phi v_if_\pm  \mp\partial^{e_i}\phi\partial_{e_i}f_\pm \pm \partial^{e_i}\phi  v_i\mu^{1/2} - L_\pm f = \Gamma_{\pm}(f,f),\\
	&-\Delta_x \phi = \int_{\Rd}(f_+-f_-)\mu^{1/2}\,dv, \quad \phi\to 0\text{ as }|x|\to\infty,\\
	&f_\pm|_{t=0} = f_{0,\pm}. 
\end{aligned}\right.
\end{equation}The index appearing in both superscript and subscript means the summation. Our goal is to obtain the $a$ $priori$ from this equation. 
For this, we suppose that the Cauchy problem \eqref{16?} admits a smooth solution $f(t,x,v)$ over $0\le t\le T$ for $0<T\le\infty$, and the solution $f(t,x,v)$ satisfies 
\begin{align}
	\sup_{0\le t\le T}\E_{K,l}(t)\le \delta_0,
\end{align} where $\delta_0$ is a suitably small constant. 
Under this assumption, we can derive a simple fact that 
\begin{align*}
	\|\phi\|_{L^\infty}\lesssim\|\phi\|_{H^2_x}\le \delta_0, \quad \|e^{\pm\phi}\|_{L^\infty}\approx 1.
\end{align*}
Also, by equation \eqref{21}$_1$, we have 
\begin{equation}\label{34a}
	\partial_t\phi = -\Delta_x^{-1}\partial_t(a_+-a_-)=\Delta_x^{-1}\nabla\cdot G,
\end{equation}
\begin{equation}\label{34}
	\|\partial_t\phi\|_{L^\infty}\lesssim  \|\nabla_x\partial_t\phi\|^{1/2}_{L^2_x}\|\nabla^2_x\partial_t\phi\|^{1/2}_{L^2_x}\lesssim \|\nabla_x G\|_{H^1}\lesssim \|(\I-\P)f\|_{L^2_vH^2_x} \lesssim (\E^h_{K,l})^{1/2}(t). 
\end{equation}

\begin{Thm}\label{thm41}Define $i=1$ if $0<s<\frac{1}{2}$ and $i=2$ if $\frac{1}{2}\le s<1$. For any $l\ge K\ge i+1$, there is $\E_{K,l}$ satisfying \eqref{Defe} such that for $0\le t\le T$,
\begin{align}\label{42a}
	\partial_t\E_{K,l}(t)+\lambda D_{K,l}(t)\lesssim \|\partial_t\phi\|_{L^\infty_x}\E_{K,l}(t), 
\end{align}where $D_{K,l}$ is defined by \eqref{Defd}. 
\end{Thm}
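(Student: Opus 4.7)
The plan is to apply $\partial^\alpha_\beta$ for $|\alpha|+|\beta|\le K$ to the equation \eqref{16?}$_1$, take the inner product with $e^{\pm\phi}w^{2l-2|\alpha|-2|\beta|}\partial^\alpha_\beta f_\pm$ in $L^2_{v,x}$, sum on $\pm$, and then add a small multiple of the macroscopic interactive functionals $\E^{(1)}_{K}$ and $\E^{(1)}_{K,h}$ from Lemma \ref{Lemma31}. With appropriately chosen linear-combination constants, the resulting total functional is equivalent to $\E_{K,l}$ in the sense of \eqref{Defe}, and the corresponding dissipative terms, together with \eqref{24}-\eqref{24a}, recover $\D_{K,l}$ in the sense of \eqref{Defd}.

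\textbf{Key cancellations and weight choice.} The factor $e^{\pm\phi}$ is designed exactly to kill the diagonal piece $\alpha_1=0$ of
\[\Bigl(\sum_{\alpha_1\le\alpha}\tbinom{\alpha}{\alpha_1}v\cdot\nabla_x\partial^{\alpha_1}\phi\,\partial^{\alpha-\alpha_1}f_\pm,\,e^{\pm\phi}w^{2l-2|\alpha|-2|\beta|}\partial^\alpha_\beta f_\pm\Bigr)_{L^2_{v,x}},\]
against the corresponding contribution of $v\cdot\nabla_x\partial^\alpha_\beta f_\pm$, since $\nabla_x e^{\pm\phi}=\pm e^{\pm\phi}\nabla_x\phi$ and $w$ is $x$-independent. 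The off-diagonal pieces $\alpha_1\neq 0$ are absorbed in $\E_{K,l}^{1/2}\D_{K,l}$ by means of the algebra inequalities \eqref{13}-\eqref{14} together with the a priori smallness of $\|\phi\|_{H^2_x}$. The drift $\mp\nabla_x\phi\cdot\nabla_vf_\pm$ transfers one $x$-derivative to $v$: when $\alpha_1=0$ one integrates by parts in $v$ so that $\nabla_v$ falls on the weight (a harmless bounded scalar), while when $\alpha_1\neq 0$ the decreasing velocity weight $w^{l-|\alpha|-|\beta|}$ precisely accommodates the lost derivative so the resulting norm sits in $\D_{K,l}$. The collision operator $L$ delivers the pseudodifferential dissipation via Lemma \ref{lemmaL}(iii); its loss term $\eta\sum_{|\beta_1|\le|\beta|}\|(\tilde a^{1/2})^w w^{l-|\alpha|-|\beta_1|}\partial^\alpha_{\beta_1}f\|^2$ is absorbed by a standard induction on $|\beta|$, which is in fact the reason for letting the velocity weight depend on $|\beta|$. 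The nonlinear collision contribution is bounded by $\E_{K,l}^{1/2}\D_{K,l}+\E_{K,l}\D_{K,l}^{1/2}$ via Lemma \ref{lemmag}, and the Poisson source $\pm E\cdot v\mu^{1/2}$ is absorbed into $\|\partial^\alpha E\|_{L^2_x}$ recovered from Lemma \ref{Lemma31}.

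\textbf{Main obstacle and closure.} The only term that cannot be absorbed into the dissipation appears when $\partial_t$ acts on the factor $e^{\pm\phi}$ in the energy, producing
\[\pm\tfrac12\bigl(\partial_t\phi\cdot e^{\pm\phi}w^{2l-2|\alpha|-2|\beta|}\partial^\alpha_\beta f_\pm,\,\partial^\alpha_\beta f_\pm\bigr)_{L^2_{v,x}}\lesssim \|\partial_t\phi\|_{L^\infty_x}\E_{K,l}(t).\]
Since no $\partial_t\phi$-dissipation is available, this term must be carried on the right-hand side — which is precisely the form of \eqref{42a}. The nonlinear error $\E_{K,l}^{1/2}\D_{K,l}+\E_{K,l}\D_{K,l}^{1/2}$ is then absorbed into $\lambda\D_{K,l}$ by choosing $\delta_0$ small in the a priori bound. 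The main bookkeeping difficulty is synchronising four ingredients simultaneously — the $e^{\pm\phi}$ identity, the descending weight $w^{l-|\alpha|-|\beta|}$, the $\eta$-induction on $|\beta|$ from Lemma \ref{lemmaL}(iii), and the macroscopic coefficients from Lemma \ref{Lemma31} — so that a single functional equivalent to $\E_{K,l}$ emerges on the left-hand side. The argument in \cite{Guo2012} for pure spatial derivatives serves as a template; the novelty here is to combine it with the pseudodifferential dissipation machinery of Section~2 to treat mixed derivatives under the non-cutoff assumption.
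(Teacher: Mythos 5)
Your outline correctly identifies most of the right ingredients — the $e^{\pm\phi}$ cancellation against $v\cdot\nabla_x f$, the use of the descending weight $w^{l-|\alpha|-|\beta|}$ to handle $\alpha_1\neq 0$ in $\nabla_x\phi\cdot\nabla_v f$, the $\eta$-induction from Lemma \ref{lemmaL}(iii), the trilinear bounds from Lemma \ref{lemmag}, the macroscopic interactive functional from Lemma \ref{Lemma31}, and the irreducible $\|\partial_t\phi\|_{L^\infty_x}\E_{K,l}$ term. However, your strategy of performing a \emph{single} weighted energy estimate against $e^{\pm\phi}w^{2l-2|\alpha|-2|\beta|}\partial^\alpha_\beta f_\pm$ for all $|\alpha|+|\beta|\le K$ misses two ingredients that the paper uses in an essential way, and without which the argument breaks.

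First, the paper runs a separate \emph{unweighted} estimate with $\beta=0$ (Step 1, test function $\psi_{2|\alpha|-4}e^{\pm\phi}\partial^\alpha f_\pm$). This is indispensable because the Poisson source term combines to a total time derivative only when the test function carries no velocity weight: one uses
\[
\sum_\pm \pm\bigl(\partial^{e_i+\alpha}\phi\, v_i\mu^{1/2},\psi_{2|\alpha|-4}\partial^\alpha f_\pm\bigr)_{L^2_{v,x}}
=\bigl(\partial^\alpha\phi,\psi_{2|\alpha|-4}\partial^\alpha\partial_t(a_+-a_-)\bigr)_{L^2_x}
=\tfrac12\partial_t\|\psi_{|\alpha|-2}\partial^\alpha\nabla_x\phi\|^2_{L^2_x},
\]
which hinges on $(v_i\mu^{1/2},\partial^\alpha f_\pm)_{L^2_v}$ reproducing $G$ and $-\partial_t(a_+-a_-)=\nabla_x\cdot G$. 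With $w^{2l-2|\alpha|-2|\beta|}$ in the pairing, $(v_i\mu^{1/2}w^{2l-2|\alpha|-2|\beta|},\partial^\alpha_\beta f_\pm)_{L^2_v}$ is no longer $G$ and the identity fails; in the weighted Steps 2–3 the paper instead gets only the upper bound $\eta\|(\tilde a^{1/2})^w w^{l-\cdots}\partial^\alpha_\beta f_\pm\|^2+C_\eta\|\partial^\alpha\nabla_x\phi\|^2$, which is then absorbed by the $\partial_t\|\partial^\alpha\nabla_x\phi\|^2$ coming from Step 1. Without Step 1 there is nothing on the left to provide the top-order $\sum_{|\alpha|\le K}\|\psi_{|\alpha|-2}\partial^\alpha\nabla_x\phi\|^2$ required for the equivalence \eqref{Defe}, since Lemma \ref{Lemma31} only returns the lower-order dissipative $\sum_{|\alpha|\le K-1}\|\partial^\alpha E\|^2$.

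Second, and more seriously, for $\beta\neq 0$ the paper does \emph{not} test with $\partial^\alpha_\beta f_\pm$; it decomposes $f=\P f+(\I-\P)f$, passes to the equation \eqref{36} for $(\II-\PP)f$, and tests with $e^{\pm\phi}w^{2l-2|\alpha|-2|\beta|}\partial^\alpha_\beta(\II-\PP)f$. This is forced by the loss term in Lemma \ref{lemmaL}(iii): applying it to $g=(\I-\P)f$ yields a remainder $C_\eta\|\partial^\alpha(\I-\P)f\|^2_{L^2(B_{C_\eta})}$, which for every $|\alpha|\le K-1$ is dominated by $\|(\tilde a^{1/2})^w\partial^\alpha(\I-\P)f\|^2$ and thus sits inside $\D_{K,l}$. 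If instead you test with the full $\partial^\alpha_\beta f_\pm$, the loss becomes $C_\eta\|\partial^\alpha f\|^2_{L^2(B_{C_\eta})}$, which for $\alpha=0$ contains $\|a_\pm\|^2+\|b\|^2+\|c\|^2$ with no spatial derivative — and those are emphatically \emph{not} controlled by $\D_{K,l}$ (which only contains $\nabla_x(a_\pm,b,c)$ and $a_+-a_-$). So the closure step in your proposal, ``absorb the loss into $\lambda\D_{K,l}$,'' fails at $\alpha=0$, $\beta\neq 0$. In short: the three-step split (unweighted $\beta=0$; weighted $\beta=0$, $|\alpha|\ge1$; weighted micro part $(\II-\PP)f$ for mixed derivatives with $|\alpha|\le K-1$) is not a bookkeeping preference but is structurally necessary, and a single unified estimate of the kind you describe does not close.
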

\begin{proof}
	For later use and brevity of the proof, we define a useful function
	$\psi = \psi(t)$ equal to $1$ in this section and equal to $t^{N}(0\le t\le 1)$ in the next section. In any case, we have 
	\begin{align*}
		0\le \psi\le 1.
	\end{align*}
	In this proof, we will carry the function $\psi$ for brevity of the proof in next section. 
	
	For any $K\ge i+1$ being the total derivative of $v,x$, we let $|\alpha|+|\beta|\le K$.
	On one hand, we apply $\partial^\alpha$ to equation \eqref{7}$_1$ to get 
	\begin{equation}\begin{aligned}\label{35}
		&\quad\,\partial_t\partial^{\alpha} f_\pm + v_i\partial^{e_i+\alpha} f_\pm \pm \frac{1}{2}\sum_{\substack{\alpha_1\le\alpha}}\partial^{e_i+\alpha_1}\phi v_i\partial^{\alpha-\alpha_1}f_\pm \\ &\qquad\mp\sum_{\substack{\alpha_1\le\alpha}}\partial^{e_i+\alpha_1}\phi\partial^{\alpha-\alpha_1}_{e_i} f_\pm \pm \partial^{e_i+\alpha}\phi v_i\mu^{1/2} - \partial^{\alpha} L_\pm  f =
		\partial^{\alpha} \Gamma_{\pm}(f,f).\end{aligned}
\end{equation}
On the other hand, we apply $\partial^{\alpha}_\beta$ to equation \eqref{7}$_1$ and decompose $f_\pm=\PP f+(\II-\PP)f$. Then, 
	\begin{align}\label{36}
		&\quad\,\partial_t\partial^{\alpha}_\beta (\II-\PP)f + \sum_{\beta_1\le \beta}C^{\beta_1}_{\beta}\partial_{\beta_1}v_i\partial^{e_i+\alpha}_{\beta-\beta_1}(\II-\PP)f \notag\\
		&\notag\qquad\pm \frac{1}{2}\sum_{\substack{\alpha_1\le\alpha}}\sum_{\beta_1\le\beta}\partial^{e_i+\alpha_1}\phi \partial_{\beta_1}v_i\partial^{\alpha-\alpha_1}_{\beta-\beta_1}(\II-\PP)f \\ &\qquad\mp\sum_{\substack{\alpha_1\le\alpha}}\partial^{e_i+\alpha_1}\phi\partial^{\alpha-\alpha_1}_{\beta+e_i}(\II-\PP)f \pm \partial^{e_i+\alpha}\phi \partial_\beta(v_i\mu^{1/2}) - \partial^{\alpha}_\beta L_\pm (\I-\P)f \\
		&\notag= -\partial_t\partial^{\alpha}_\beta \PP f + \sum_{\beta_1\le \beta}C^{\beta_1}_{\beta}\partial_{\beta_1}v_i\partial^{e_i+\alpha}_{\beta-\beta_1}\PP f \mp \frac{1}{2}\sum_{\substack{\alpha_1\le\alpha}}\sum_{\beta_1\le\beta}\partial^{e_i+\alpha_1}\phi \partial_{\beta_1}v_i\partial^{\alpha-\alpha_1}_{\beta-\beta_1}\PP f \\ &\notag\qquad\mp\sum_{\substack{\alpha_1\le\alpha}}\partial^{e_i+\alpha_1}\phi\partial^{\alpha-\alpha_1}_{\beta+e_i}\PP f
		+
		\partial^{\alpha}_\beta \Gamma_{\pm}(f,f).
	\end{align}

\paragraph{Step 1. Estimate without weight.}
For the estimate without weight, we take the case $|\alpha|\le K$ and $\beta=0$. This case is for obtaining the term $\|\partial^\alpha\nabla_x\phi\|^2_{L^2_x}$ on the left hand side of the energy inequality. Taking inner product of equation \eqref{35} with $\psi_{2|\alpha|-4}e^{\pm\phi}\partial^{\alpha} f_\pm$ over $\R^3_v\times\R^3_x$, we have   
\begin{align}\label{45}
	&\notag\quad\,\Big(\partial_t\partial^{\alpha} f_\pm,\psi_{2|\alpha|-4}e^{\pm\phi}\partial^{\alpha} f_\pm\Big)_{L^2_{v,x}}
	+ \Big(v_i\partial^{e_i+\alpha}f_\pm,\psi_{2|\alpha|-4}e^{\pm\phi}\partial^{\alpha} f_\pm\Big)_{L^2_{v,x}}\\ 
	&\notag\pm \Big(\frac{1}{2}\sum_{\substack{\alpha_1\le\alpha}}C^{\alpha_1}_{\alpha}\partial^{e_i+\alpha_1}\phi v_i\partial^{\alpha-\alpha_1}f_\pm,\psi_{2|\alpha|-4}e^{\pm\phi}\partial^{\alpha} f_\pm\Big)_{L^2_{v,x}} \\ 
	&\mp
	\Big(\sum_{\substack{\alpha_1\le\alpha}}C^{\alpha_1}_{\alpha}\partial^{e_i+\alpha_1}\phi\partial^{\alpha-\alpha_1}_{e_i}f_\pm,\psi_{2|\alpha|-4}e^{\pm\phi}\partial^{\alpha} f_\pm\Big) _{L^2_{v,x}}\\
	&\notag\pm \Big(\partial^{e_i+\alpha}\phi v_i\mu^{1/2},\psi_{2|\alpha|-4}e^{\pm\phi}\partial^{\alpha} f_\pm\Big)_{L^2_{v,x}} 
	- \Big(\partial^{\alpha} L_\pm f,\psi_{2|\alpha|-4}e^{\pm\phi}\partial^{\alpha} f_\pm\Big)_{L^2_{v,x}}\\ 
	&\notag= \Big(\partial^{\alpha} \Gamma_{\pm}(f,f),\psi_{2|\alpha|-4}e^{\pm\phi}\partial^{\alpha} f_\pm\Big)_{L^2_{v,x}}.
\end{align}
Now we denote these terms with summation $\sum_{\pm}$ by $I_1$ to $I_7$ and estimate them term by term. 

For the first term $I_1$ on the left hand side.  
\begin{align*}
	&\quad\,\frac{1}{2}\partial_t\|\psi_{|\alpha|-2}e^{\frac{\pm\phi}{2}}\partial^{\alpha} f_\pm\|^2_{L^2_{v,x}} \\&= \Re\big(\partial_t(\psi_{|\alpha|-2}e^{\frac{\pm\phi}{2}}\partial^{\alpha} f_\pm),\psi_{|\alpha|-2}e^{\frac{\pm\phi}{2}}\partial^{\alpha} f_\pm)_{L^2_{v,x}}\\
	&= \Re(\partial_t\partial^{\alpha} f_\pm\psi_{|\alpha|-2} \pm\frac{1}{2}\partial_t\phi\psi_{|\alpha|-2}\partial^{\alpha}f_\pm +\partial_t(\psi_{|\alpha|-2})\partial^{\alpha} f_\pm,\psi_{|\alpha|-2} e^{\pm\phi}\partial^{\alpha} f_\pm)_{L^2_{v,x}}.
\end{align*}Then,
\begin{align}\label{37}
	I_1 &= \frac{1}{2}\partial_t\sum_{\pm}\|e^{\frac{\pm\phi}{2}}\psi_{|\alpha|-2}\partial^{\alpha} f_\pm\|^2_{L^2_{v,x}} \mp \Re\sum_{\pm}\frac{1}{2}(\partial_t\phi e^{\pm\phi}\partial^{\alpha} f_\pm, \psi_{2|\alpha|-4}\partial^{\alpha} f_\pm)_{L^2_{v,x}}\\&\qquad-\Re\sum_{\pm}(\partial_t(\psi_{|\alpha|-2})\partial^{\alpha} f_\pm,\psi_{|\alpha|-2} e^{\pm\phi}\partial^{\alpha} f_\pm)_{L^2_{v,x}}. \notag
\end{align}When $\psi=1$, the third term on the right hand side is $0$. The second term on the right hand side of \eqref{37} is estimated as 
\begin{align}\label{46}
	\Big|\frac{1}{2}(\partial_t\phi \psi_{2|\alpha|-4}e^{\pm\phi}\partial^{\alpha} f_\pm, \partial^{\alpha} f_\pm)_{L^2_{v,x}}\Big|\lesssim \|\partial_t\phi\|_{L^\infty}\|\psi_{|\alpha|-2}\partial^\alpha f_\pm\|^2_{L^2_{v,x}}\lesssim \|\partial_t\phi\|_{L^\infty}\E_{K,l}(t)
\end{align}

For the second term $I_2$, we will compose it with $I_3$ with $\alpha_1=0$ in $I_3$. It turns out that the sum is zero. This is what $e^{\pm\phi}$ designed for, cf. \cite{Guo2012}. By taking integration by parts on $x$, one has  
\begin{align}\label{48aa}
	&\quad\,\Big(v_i\partial^{e_i+\alpha}f_\pm,\psi_{2|\alpha|-4}e^{\pm\phi}\partial^{\alpha} f_\pm\Big)_{L^2_{v,x}}
	\pm \Big(\frac{1}{2}\partial^{e_i}\phi v_i\partial^{\alpha}f_\pm,\psi_{2|\alpha|-4}e^{\pm\phi}\partial^{\alpha} f_\pm\Big)_{L^2_{v,x}}=0.
\end{align}

For the left terms in $I_3$, the weight will be used. In this case, $\alpha_1$ is not zero. (If $\alpha=0$, then there's already no left terms in $I_3$.) Then $|\alpha|\ge 1$ and the second $f_\pm$ in the following must have at least one order derivative. Notice that $\psi\le 1$. 
\begin{equation}\label{40}\begin{aligned}
	&\quad\, \Big|\pm \Big(\frac{1}{2}\sum_{\substack{\alpha_1\le\alpha}}C^{\alpha_1}_{\alpha}\partial^{e_i+\alpha_1}\phi v_i\partial^{\alpha-\alpha_1}f_\pm,\psi_{2|\alpha|-4}e^{\pm\phi}\partial^{\alpha} f_\pm\Big)_{L^2_{v,x}}\Big|\\
	&\lesssim \sum_{\substack{\alpha_1\le\alpha}}\Big|\Big(\psi_{|\alpha|-2}\partial^{e_i+\alpha_1}\phi w\partial^{\alpha-\alpha_1}f_\pm,\psi_{|\alpha|-2}e^{\pm\phi}\partial^{\alpha} f_\pm\Big)_{L^2_{v,x}}\Big|\\
	&\lesssim \Big(\sum_{|\alpha_1|\le K}\|\psi_{|\alpha_1|-2}\partial^{e_i+\alpha_1}\phi\|_{L^2_{v,x}}\Big)\Big(\sum_{1\le|\alpha|\le K-1}\|\psi_{|\alpha|-2}w^{l-|\alpha|}\partial^{\alpha}f_\pm e^{\frac{\pm\phi}{2}}\|_{L^2_{v,x}}\Big)\|\psi_{|\alpha|-2}\partial^{\alpha} f_\pm e^{\frac{\pm\phi}{2}}\|_{L^2_{v,x}}\\
	&\lesssim \E^{1/2}_{K,l}(t)\D_{K,l}(t).
\end{aligned}\end{equation}
Here we used \eqref{13} for $\phi$ and the first $f_\pm$. When the number of derivatives on $\phi$ and $f_\pm$ are both less than $K$, we used \eqref{13}$_2$ to give one order of derivative to them and the total number of derivatives are less or equal to $K$. When one of $\phi$ and $f_\pm$ has $K$ derivatives, then we use \eqref{13}$_1$ to give two derivatives to the other one. Then the total number of derivatives for them are still less or equal to $K$. The technique is the same as Lemma \ref{lemmat}. 

For the term $I_4$, when $\alpha_1=0$, by integration by parts on $\partial_{e_i}$, we have 
\begin{align}\label{50}
	I_4&=\sum_{\pm}\mp
	\Big(\partial^{e_i}\phi\partial^{\alpha}_{e_i}f_\pm,\psi_{2|\alpha|-4}e^{\pm\phi}\partial^{\alpha} f_\pm\Big)_{L^2_{v,x}}=0,
\end{align}
When $\alpha_1\neq 0$, then $|\alpha|\ge1$ and the total order of derivatives on the first $f_\pm$ is less or equal to $K$ and is controllable. 
\begin{align}\label{42}
	\notag|I_4|&=\Big|\sum_{\pm}\mp
	\Big(\sum_{\substack{0\neq\alpha_1\le\alpha}}C^{\alpha_1}_{\alpha}\partial^{e_i+\alpha_1}\phi\partial^{\alpha-\alpha_1}_{e_i}f_\pm,\psi_{2|\alpha|-4}e^{\pm\phi}\partial^{\alpha} f_\pm\Big) _{L^2_{v,x}}\Big|\\	
	&\lesssim \E^{1/2}_{K,l}(t)\D_{K,l}(t).
\end{align}
Here we used \eqref{13} for $\phi$ and the first $f_\pm$ as the followings. If $|\alpha_1|=1$, we use \eqref{13}$_1$ to give two derivatives to $\phi$ on $x$. If $|\alpha_1|=2$, we use \eqref{13}$_2$ to give one derivative to both $\phi$ and the first $f_\pm$ on $x$. If $K\ge|\alpha|\ge 3$, then we use \eqref{13}$_1$ to give two $x$ derivatives to the first $f_\pm$. The idea is similar to the proof of Lemma \ref{lemmat}. We also used that for $m\ge 2$, 
\begin{align*}
	\|\psi_{m-2}\nabla_x^{m+1}\phi\|_{L^2_x}\lesssim \|\psi_{m-2}\nabla_x^{m+1}\nabla_x^{-1}(a_+-a_-)\|^2_{L^2_x}\lesssim \|\psi_{m-3}\nabla_x^{m-1}(a_+,a_-)\|^2_{L^2_x}\lesssim \D_{K,l},
\end{align*}which follows from \eqref{16?}$_2$.

For the term $I_5$, we will divide $e^{\pm\phi}$ into $(e^{\pm\phi}-1)$ and $1$. Recall equation \eqref{16} and \eqref{21}. For the part of $1$, 
\begin{align}\label{43}\notag
	\sum_{\pm}\pm\Big(\partial^{e_i+\alpha}\phi v_i\mu^{1/2},\psi_{2|\alpha|-4}\partial^{\alpha} f_\pm\Big)_{L^2_{v,x}} 
    &=\notag -\Big(\partial^{\alpha}\phi,\psi_{2|\alpha|-4}\partial^{\alpha} \nabla_x\cdot G\Big)_{L^2_{x}}\\
    &=\notag \Big(\partial^{\alpha}\phi,\psi_{2|\alpha|-4}\partial^{\alpha} \partial_t(a_+-a_-)\Big)_{L^2_{x}}\\
    &= \frac{1}{2}\partial_t\|\psi_{|\alpha|-2}\partial^{\alpha}\nabla_x\phi\|_{L^2_x}^2.
\end{align}
For the part of $(e^{\pm\phi}-1)$, notice that 
\begin{align*}
	|e^{\pm\phi}-1|\lesssim \|\phi\|_{L^\infty}\lesssim \|\nabla_x\phi\|_{H^1_x}.
\end{align*}Then, 
\begin{align}
	\Big|&\sum_{\pm}\pm\Big(\partial^{e_i+\alpha}\phi v_i\mu^{1/2},(e^{\pm\phi}-1)\psi_{2|\alpha|-4}\partial^{\alpha} f_\pm\Big)_{L^2_{v,x}}\Big|\notag\\
	&\lesssim \|\nabla_x\phi\|_{H^1_x}\sum_{|\alpha|\le K}\|\partial^\alpha\nabla_x\phi\|_{L^2_{v,x}}\sum_{|\alpha|\le K}\|\partial^\alpha(\II-\PP)f\|_{L^2_{v,x}}\\
	&\lesssim \E^{1/2}_{K,l}(t)\D_{K,l}(t).\notag
\end{align}

For the term $I_6$, since $L_\pm$ commutes with $\partial^{\alpha}$ and $e^{\pm\phi}$, by Lemma \ref{lemmaL}, we have 
\begin{align}
	I_6 = - \sum_{\pm}\Big(\partial^{\alpha} L_\pm f,\psi_{2|\alpha|-4}e^{\pm\phi}\partial^{\alpha} f_\pm\Big)_{L^2_{v,x}}\ge \lambda \sum_{\pm}\|\psi_{|\alpha|-2}e^{\frac{\pm\phi}{2}}(\tilde{a}^{1/2})^w\partial^{\alpha}(\II-\PP) f\|_{L^2_{v,x}}^2. 
\end{align}

For the term $I_7$, by Lemma \ref{lemmag}, we have 
\begin{align}
	|I_7|&= \Big|\sum_{\pm}\Big(\partial^{\alpha} \Gamma_{\pm}(f,f),\psi_{2|\alpha|-4}e^{\pm\phi}\partial^{\alpha} f_\pm\Big)_{L^2_{v,x}}\Big|\lesssim\E^{1/2}_{K,l}(t)\D_{K,l}(t).
\end{align}

Therefore, combining all the estimate above and take the summation on $\pm$, $|\alpha|\le K$, noticing that $|e^{\frac{\pm\phi}{2}}|\approx 1$, we conclude that, when $\psi=1$, 
\begin{equation}\label{47}
	\begin{aligned}
		&\quad\,\frac{1}{2}\partial_t\sum_{\pm}\sum_{|\alpha|\le K}\Big(\|\psi_{|\alpha|-2}e^{\frac{\pm\phi}{2}}\partial^{\alpha} f_\pm\|_{L^2_{v,x}} +
		\|\psi_{|\alpha|-2}\partial^{\alpha}\nabla_x\phi\|_{L^2_x}^2\Big)\\&\qquad + \lambda \sum_{\pm}\sum_{|\alpha|\le K}\|\psi_{|\alpha|-2}e^{\frac{\pm\phi}{2}}(\tilde{a}^{1/2})^w\partial^{\alpha} (\II-\PP)f\|_{L^2_{v,x}}^2\\
 &\lesssim \|\partial_t\phi\|_{L^\infty}\E_{K,l}(t)+\E^{1/2}_{K,l}(t)\D_{K,l}(t).
	\end{aligned}
\end{equation}
Taking the combination $\eqref{47}+\kappa\times\eqref{24}$ with $0<\kappa<<1$, we have that when $\psi=1$, 
\begin{align}\label{48a}\notag
	&\quad\,\frac{1}{2}\partial_t\sum_{\pm}\sum_{|\alpha|\le K}\Big(\|\psi_{|\alpha|-2}e^{\frac{\pm\phi}{2}}\partial^{\alpha} f_\pm\|_{L^2_{v,x}} +
	\|\psi_{|\alpha|-2}\partial^{\alpha}\nabla_x\phi\|_{L^2_x}^2+\kappa\E^{(1)}_{K}\Big)\\\notag&\qquad + \lambda \sum_{\pm}\sum_{|\alpha|\le K}\|\psi_{|\alpha|-2}e^{\frac{\pm\phi}{2}}(\tilde{a}^{1/2})^w\partial^{\alpha} (\II-\PP)f\|_{L^2_{v,x}}^2\\
	&\notag\qquad + \lambda\sum_{|\alpha|\le K-1} \|\partial^\alpha\nabla_x(a_\pm,b,c)\|^2_{L^2_x}+\lambda\|a_+-a_-\|^2_{L^2_x}+\lambda\sum_{|\alpha|\le K-1}\|\partial^\alpha E\|^2_{L^2_x}\\
	&\lesssim \|\partial_t\phi\|_{L^\infty}\E_{K,l}(t)+ (\E^{1/2}_{K,l}(t)+\E_{K,l}(t))\D_{K,l}(t)
\end{align}
The term $\|(\tilde{a}^{1/2})\psi_{|\alpha|-2}(\I-\P)\partial^{\alpha}\widehat{f}\|_{L^2_{v,x}}^2$ in \eqref{24} is eliminated. 

\paragraph{Step 2. Estimate with weight on $x$ derivatives}
This case is particularly for $|\alpha|=K$. Let $1\le |\alpha|\le K$ and take inner product of \eqref{35} with $\psi_{2|\alpha|-4}e^{\pm\phi}w^{2l-2|\alpha|}\partial^\alpha f_\pm$ over $\R^3_v\times\R^3_x$.
\begin{align}\label{77}
	&\notag\quad\,\Big(\partial_t\partial^{\alpha} f_\pm,\psi_{2|\alpha|-4}w^{2l-2|\alpha|}e^{\pm\phi}\partial^{\alpha} f_\pm\Big)_{L^2_{v,x}}
	+ \Big(v_i\partial^{e_i+\alpha}f_\pm,\psi_{2|\alpha|-4}w^{2l-2|\alpha|}e^{\pm\phi}\partial^{\alpha} f_\pm\Big)_{L^2_{v,x}}\\ 
	&\notag\pm \Big(\frac{1}{2}\sum_{\substack{\alpha_1\le\alpha}}C^{\alpha_1}_{\alpha}\partial^{e_i+\alpha_1}\phi v_i\partial^{\alpha-\alpha_1}f_\pm,\psi_{2|\alpha|-4}w^{2l-2|\alpha|}e^{\pm\phi}\partial^{\alpha} f_\pm\Big)_{L^2_{v,x}} \\ 
	&\mp
	\Big(\sum_{\substack{\alpha_1\le\alpha}}C^{\alpha_1}_{\alpha}\partial^{e_i+\alpha_1}\phi\partial^{\alpha-\alpha_1}_{e_i}f_\pm,\psi_{2|\alpha|-4}w^{2l-2|\alpha|}e^{\pm\phi}\partial^{\alpha} f_\pm\Big) _{L^2_{v,x}}\\
	&\notag\pm \Big(\partial^{e_i+\alpha}\phi v_i\mu^{1/2},\psi_{2|\alpha|-4}w^{2l-2|\alpha|}e^{\pm\phi}\partial^{\alpha} f_\pm\Big)_{L^2_{v,x}} 
	- \Big(\partial^{\alpha} L_\pm f,\psi_{2|\alpha|-4}w^{2l-2|\alpha|}e^{\pm\phi}\partial^{\alpha} f_\pm\Big)_{L^2_{v,x}}\\ 
	&\notag= \Big(\partial^{\alpha} \Gamma_{\pm}(f,f),\psi_{2|\alpha|-4}w^{2l-2|\alpha|}e^{\pm\phi}\partial^{\alpha} f_\pm\Big)_{L^2_{v,x}}.
\end{align}
As in the Step 1, taking summation on $\pm$, we estimate it term by term. The proof is similar to $I_1$ to $I_7$. The first term on the left hand is 
\begin{align}\label{99c}\notag &\quad\,\frac{1}{2}\partial_t\sum_{\pm}\|e^{\frac{\pm\phi}{2}}w^{l-|\alpha|}\psi_{|\alpha|-2}\partial^{\alpha} f_\pm\|_{L^2_{v,x}} \mp \Re\sum_{\pm}\frac{1}{2}(\partial_t\phi e^{\pm\phi}\partial^{\alpha} f_\pm, \psi_{2|\alpha|-4}w^{2l-2|\alpha|}\partial^{\alpha} f_\pm)_{L^2_{v,x}}\\&\qquad-\Re\sum_{\pm}(\partial_t(\psi_{|\alpha|-2})\partial^{\alpha} f_\pm,\psi_{|\alpha|-2} e^{\pm\phi}w^{2l-2|\alpha|}\partial^{\alpha} f_\pm)_{L^2_{v,x}}. 
\end{align}
The second term and the third term with $\alpha_1=0$ are canceled by using integration by parts. The left case $\alpha_1\neq 0$ in the third term is bounded above by $\E^{1/2}_{K,l}(t)\D_{K,l}(t)$. 
For the fourth term when $\alpha_1=0$, by integration by parts on $\partial_{e_i}$, we have 
\begin{align*}
	\sum_{\pm}\mp
	\Big(\partial^{e_i}\phi\partial^{\alpha}_{e_i}f_\pm,\psi_{2|\alpha|-4}w^{2l-2|\alpha|}e^{\pm\phi}\partial^{\alpha} f_\pm\Big)_{L^2_{v,x}}\lesssim \E^{1/2}_{K,l}(t)\D_{K,l}(t).
\end{align*}
When $\alpha_1\neq 0$, then $|\alpha|\ge1$ and the total order of derivatives on the first $f_\pm$ is less or equal to $K$ and the fourth term is bounded above by $\E^{1/2}_{K,l}(t)\D_{K,l}(t)$. 
For the fifth term, we write a upper bound: for any $\eta>0$,
\begin{align*}
	\Big|\Big(\partial^{e_i+\alpha}\phi v_i\mu^{1/2},\psi_{2|\alpha|-4}w^{2l-2|\alpha|}e^{\pm\phi}\partial^{\alpha} f_\pm\Big)_{L^2_{v,x}}\Big| &\lesssim \eta\|\psi_{|\alpha|-2}(\tilde{a}^{1/2})^ww^{l-|\alpha|}\partial^{\alpha} f_\pm\|^2_{L^2_{v,x}}+C_\eta\|\partial^\alpha\nabla_x\phi\|^2_{L^2_x}.
\end{align*}
For the sixth term, since $L_\pm$ commutes with $\partial^{\alpha}$ and $e^{\pm\phi}$, by Lemma \ref{lemmaL}, we have 
\begin{align*}
	&- \sum_{\pm}\Big(\partial^{\alpha}w^{2l-2|\alpha|} L_\pm f,\psi_{2|\alpha|-4}e^{\pm\phi}\partial^{\alpha} f_\pm\Big)_{L^2_{v,x}}\\&\ge \lambda \|\psi_{|\alpha|-2}e^{\frac{\pm\phi}{2}}(\tilde{a}^{1/2})^ww^{l-|\alpha|}\partial^{\alpha} f\|_{L^2_{v,x}}^2-C\|\psi_{|\alpha|-2}(\tilde{a}^{1/2})^w\partial^\alpha f\|^2_{L^2_{v,x}}. 
\end{align*}
By using Lemma \ref{lemmag}, the first term on the right hand of \eqref{77} is bounded above by $\E^{1/2}_{K,l}(t)\D_{K,l}(t)$.
Taking $\psi=1$, combining the above estimate, taking summation on $1\le |\alpha|\le K$ and letting $\eta$ suitably small, we have 
\begin{align}\label{74}
	&\quad\,\notag\frac{1}{2}\partial_t\sum_{\pm}\sum_{1\le|\alpha|\le K}\|e^{\frac{\pm\phi}{2}}\psi_{|\alpha|-2}w^{l-|\alpha|}\partial^{\alpha} f_\pm\|_{L^2_{v,x}} + \lambda\sum_{\pm}\sum_{1\le|\alpha|\le K}\|\psi_{|\alpha|-2}e^{\frac{\pm\phi}{2}}(\tilde{a}^{1/2})^ww^{l-|\alpha|}\partial^{\alpha} f_\pm\|_{L^2_{v,x}}^2\\
	&\lesssim \|\partial_t\phi\|_{L^2_{v,x}}\E_{K,l}(t)+\sum_{|\alpha|\le K}\|\partial^\alpha\nabla_x\phi\|^2_{L^2_x}+\sum_{1\le|\alpha|\le K}\|\psi_{|\alpha|-2}(\tilde{a}^{1/2})^w\partial^\alpha f_\pm\|^2_{L^2_{v,x}} + \E^{1/2}_{K,l}(t)\D_{K,l}(t).
\end{align}

\paragraph{Step 3. Estimate with weight on the mixed derivatives.}

Let $K\ge i+1$ with $i=1$ if $0<s<\frac{1}{2}$ and $i=2$ if $\frac{1}{2}\le s<1$.  $|\alpha|\le K-1$ and $|\alpha|+|\beta|\le K$. Taking inner product of equation \eqref{36} with  $\psi_{2|\alpha|+2|\beta|-4}e^{\pm\phi}w^{2l-2|\alpha|-2|\beta|}\partial^{\alpha}_\beta (\II-\PP)f$ over $\R^3_v\times\R^3_x$, one has 
\begin{align*}
		&\quad\,\Big(\partial_t\partial^{\alpha}_\beta (\II-\PP)f,e^{\pm\phi}\psi_{2|\alpha|+2|\beta|-4}w^{2l-2|\alpha|-2|\beta|}\partial^{\alpha}_\beta (\II-\PP)f\Big)_{L^2_{v,x}}\\
		&\qquad + \Big(\sum_{\beta_1\le \beta}C^{\beta_1}_{\beta}\partial_{\beta_1}v_i\partial^{e_i+\alpha}_{\beta-\beta_1}(\II-\PP)f,e^{\pm\phi}\psi_{2|\alpha|+2|\beta|-4}w^{2l-2|\alpha|-2|\beta|}\partial^{\alpha}_\beta (\II-\PP)f\Big)_{L^2_{v,x}} \\
		&\qquad\pm \Big(\frac{1}{2}\sum_{\substack{\alpha_1\le\alpha\\\beta_1\le\beta}}\partial^{e_i+\alpha_1}\phi \partial_{\beta_1}v_i\partial^{\alpha-\alpha_1}_{\beta-\beta_1}(\II-\PP)f,e^{\pm\phi}\psi_{2|\alpha|+2|\beta|-4}w^{2l-2|\alpha|-2|\beta|}\partial^{\alpha}_\beta (\II-\PP)f\Big)_{L^2_{v,x}} \\ &\qquad\mp\Big(\sum_{\substack{\alpha_1\le\alpha}}\partial^{e_i+\alpha_1}\phi\partial^{\alpha-\alpha_1}_{\beta+e_i}(\II-\PP)f,e^{\pm\phi}\psi_{2|\alpha|+2|\beta|-4}w^{2l-2|\alpha|-2|\beta|}\partial^{\alpha}_\beta (\II-\PP)f\Big)_{L^2_{v,x}}\\
		&\qquad \pm \Big(\partial^{e_i+\alpha}\phi \partial_\beta(v_i\mu^{1/2}),e^{\pm\phi}\psi_{2|\alpha|+2|\beta|-4}w^{2l-2|\alpha|-2|\beta|}\partial^{\alpha}_\beta (\II-\PP)f\Big)_{L^2_{v,x}}\\
		&\qquad - \Big(\partial^{\alpha}_\beta L_\pm (\I-\P)f,e^{\pm\phi}\psi_{2|\alpha|+2|\beta|-4}w^{2l-2|\alpha|-2|\beta|}\partial^{\alpha}_\beta (\II-\PP)f\Big)_{L^2_{v,x}} \\
		&= -\Big(\partial_t\partial^{\alpha}_\beta \PP f,e^{\pm\phi}\psi_{2|\alpha|+2|\beta|-4}w^{2l-2|\alpha|-2|\beta|}\partial^{\alpha}_\beta (\II-\PP)f\Big)_{L^2_{v,x}}\\
		&\qquad + \Big(\sum_{\beta_1\le \beta}C^{\beta_1}_{\beta}\partial_{\beta_1}v_i\partial^{e_i+\alpha}_{\beta-\beta_1}\PP f,e^{\pm\phi}\psi_{2|\alpha|+2|\beta|-4}w^{2l-2|\alpha|-2|\beta|}\partial^{\alpha}_\beta (\II-\PP)f\Big)_{L^2_{v,x}} \\
		&\qquad\mp \Big(\frac{1}{2}\sum_{\substack{\alpha_1\le\alpha}}\sum_{\beta_1\le\beta}\partial^{e_i+\alpha_1}\phi \partial_{\beta_1}v_i\partial^{\alpha-\alpha_1}_{\beta-\beta_1}\PP f,e^{\pm\phi}\psi_{2|\alpha|+2|\beta|-4}w^{2l-2|\alpha|-2|\beta|}\partial^{\alpha}_\beta (\II-\PP)f\Big)_{L^2_{v,x}} \\ &\qquad\mp\Big(\sum_{\substack{\alpha_1\le\alpha}}\partial^{e_i+\alpha_1}\phi\partial^{\alpha-\alpha_1}_{\beta+e_i}\PP f,e^{\pm\phi}\psi_{2|\alpha|+2|\beta|-4}w^{2l-2|\alpha|-2|\beta|}\partial^{\alpha}_\beta (\II-\PP)f\Big)_{L^2_{v,x}}\\&\qquad
		+
		\Big(\partial^{\alpha}_\beta \Gamma_{\pm}(f,f),e^{\pm\phi}\psi_{2|\alpha|+2|\beta|-4}w^{2l-2|\alpha|-2|\beta|}\partial^{\alpha}_\beta (\II-\PP)f\Big)_{L^2_{v,x}}.
\end{align*}
Now we denote these terms with summation $\sum_{\pm}$ by $J_1$ to $J_{11}$ and estimate them term by term. 
The estimate of $J_1$ to $J_3$ are similar to $I_1$ to $I_3$. 
That is 
\begin{align*}
	J_1 &\ge \partial_t\sum_{\pm}\|e^{\frac{\pm\phi}{2}}\psi_{|\alpha|+|\beta|-2}w^{l-|\alpha|-|\beta|}\partial^{\alpha}_\beta (\II-\PP)f\|_{L^2_{v,x}} - C\|\partial_t\phi\|_{L^\infty}\E^h_{K,l}(t)\\& -\sum_{\pm}\big|(\partial_t(\psi_{|\alpha|+|\beta|-2})\partial^{\alpha}_\beta (\II-\PP)f,\psi_{|\alpha|+|\beta|-2}\notag w^{2l-2|\alpha|-2|\beta|}e^{\pm\phi}\partial^{\alpha}_\beta (\II-\PP)f)_{L^2_{v,x}}\big|  
\end{align*}
\begin{equation*}
	|J_2 + J_3|\lesssim  \E^{1/2}_{K,l}(t)\D_{K,l}(t).
\end{equation*}

For the term $J_4$, when $\alpha_1=0$, by integration by parts on $\partial_{e_i}$, we have 
\begin{align*}
	&\quad\,\big|\sum_{\pm}\mp
	(\partial^{e_i}\phi\partial^{\alpha}_{\beta+e_i}(\II-\PP)f,e^{\pm\phi}\psi_{2|\alpha|+2|\beta|-4}w^{2l-2|\alpha|-2|\beta|}\partial^{\alpha}_\beta (\II-\PP)f)_{L^2_{v,x}}\big|\\
	&=\frac{1}{2}|\sum_{\pm}(\partial^{e_i}\phi\partial_{e_i}(w^{2l-2|\alpha|-2|\beta|})\partial^{\alpha}_\beta(\II-\PP)f,e^{\pm\phi}\psi_{2|\alpha|+2|\beta|-4}\partial^{\alpha}_\beta (\II-\PP)f)_{L^2_{v,x}}|\\
	&\lesssim \sum_{\pm}\||\nabla_x\phi|\psi_{|\alpha|+|\beta|-2}w^{l-|\alpha|-|\beta|}\partial^{\alpha}_\beta(\II-\PP)f\|_{L^2_{v,x}}\|\psi_{|\alpha|+|\beta|-2}w^{l-|\alpha|-|\beta|}\partial^\alpha (\II-\PP)f\|_{L^2_{v,x}}\\
	&\lesssim \E^{1/2}_{K,l}(t)\D_{K,l}(t),
\end{align*}
by using \eqref{13}. 
If $\alpha_1\neq 0$, then $\alpha\neq 0$ and there's at least one derivative on $x$. The order of total derivatives on the first $(\II-\PP)f$ is less or equal to $K$ and its order of derivatives on $v$ is less or equal to $K-1$, and hence $J_4$ is bounded above by $\E^{1/2}_{K,l}(t)\D_{K,l}(t)$. 
For the term $J_5$, we only need to have a upper bound.
\begin{align*}\notag
	|J_5| &= 
	\Big|\sum_{\pm}\pm \Big(\partial^{e_i+\alpha}\phi \partial_\beta(v_i\mu^{1/2}),\psi_{2|\alpha|+2|\beta|-4}e^{\pm\phi}w^{2l-2|\alpha|-2|\beta|}\partial^{\alpha}_\beta (\II-\PP)f\Big)_{L^2_{v,x}}\Big|\\
	&\lesssim\sum_{|\alpha|\le K} \|\psi_{|\alpha|-2}\partial^\alpha\nabla_x\phi\|_{L^2_{v,x}}\sum_{\substack{|\alpha|+|\beta|\le K}}\|\psi_{|\alpha|+|\beta|-2}(\tilde{a}^{1/2})^ww^{l-|\alpha|-|\beta|}\partial^{\alpha}_{\beta}(\I-\P)f\|_{L^2_{v,x}}\\
	&\lesssim \eta\sum_{\substack{|\alpha|+|\beta|\le K}}\|\psi_{|\alpha|+|\beta|-2}(\tilde{a}^{1/2})^ww^{l-|\alpha|-|\beta|}\partial^{\alpha}_{\beta}(\I-\P)f\|^2_{L^2_{v,x}}+C_\eta\sum_{|\alpha|\le K} \|\psi_{|\alpha|-2}\partial^\alpha\nabla_x\phi\|_{L^2_{v,x}}^2.
\end{align*}

For the term $J_6$, since $L_\pm$ commutes with $e^{\pm\phi}$, by Lemma \ref{lemmaL}, we have 
\begin{align*}
	J_6 &= - \sum_{\pm}\Big(\partial^{\alpha}_\beta L_\pm(\I-\P) f,\psi_{2|\alpha|+2|\beta|-4}e^{\pm\phi}w^{2l-2|\alpha|-2|\beta|}\partial^{\alpha}_\beta (\II-\PP)f\Big)_{L^2_{v,x}}\\&\ge \lambda\sum_{\pm} \|\psi_{|\alpha|+|\beta|-2}e^{\frac{\pm\phi}{2}}(\tilde{a}^{1/2})^ww^{l-|\alpha|-|\beta|}\partial^{\alpha}_\beta (\II-\PP)f\|_{L^2_{v,x}}^2-C_\eta\sum_{\pm}\|(\tilde{a}^{1/2})^w\partial^\alpha(\II-\PP)f\|^2_{L^2_v}\\
	&\qquad -\eta\sum_{\pm}\sum_{|\beta_1|\le|\beta|}\|\psi_{|\alpha|+|\beta|-2}e^{\frac{\pm\phi}{2}}(\tilde{a}^{1/2})^ww^{l-|\alpha|-|\beta_1|}\partial^{\alpha}_{\beta_1}(\II-\PP)f\|^2_{L^2_v},
\end{align*}for any $\eta>0$. 
Here we use the fact that $\|w^{l-|\alpha|-|\beta|}(\cdot)\|_{L^2(B_{C_\eta})}\lesssim \|(\tilde{a}^{1/2})^w(\cdot)\|_{L^2}$. 
For $J_7$, $J_8$, using the exponential decay in $v$ and the conservation laws \eqref{17}, we have 
\begin{align*}
	J_7+J_8 &\lesssim \eta\sum_{\pm}\|\psi_{|\alpha|+|\beta|-2}(\tilde{a}^{1/2})^ww^{l-|\alpha|-|\beta|}\partial^\alpha_\beta(\II-\PP)f\|_{L^2_{v,x}}+C_\eta\Big(\sum_{|\alpha|\le K}\|\partial^\alpha\nabla_x\phi\|^2_{L^2_x}\\&\qquad+\sum_{|\alpha|\le K-1}\|\partial^\alpha\nabla_x(a_\pm,b,c)\|^2_{L^2_x}+\sum_{|\alpha|\le K}\|\psi_{|\alpha|-2}(\tilde{a}^{1/2})^w\partial^{\alpha}(\I-\P)f\|^2_{L^2_{v,x}}+\E_{K,l}(t)\D_{K,l}(t)\Big).
\end{align*}
Notice that here we used $|\alpha|\le K-1$. 
Similar to $I_3$, the term $J_9$ can be controlled by using \eqref{13}. $J_{10}$ is similar, since there's exponential decay in $v$. Then one can derive 
\begin{align*}
	|J_9+J_{10}|\lesssim \E^{1/2}_{K,l}(t)\D_{K,l}(t)
\end{align*} 
For the term $J_{11}$, by Lemma \ref{lemmag}, we have 
\begin{align*}
	|J_{11}|&= \Big|\sum_{\pm}\Big(\partial^{\alpha}_\beta \Gamma_{\pm}(f,f),\psi_{2|\alpha|+2|\beta|-4}w^{2l-2|\alpha|-2|\beta|}e^{\pm\phi}\partial^{\alpha}_\beta (\II-\PP)f\Big)_{L^2_{v,x}}\Big|\lesssim\E^{1/2}_{K,l}\D_{K,l}.
\end{align*}

Therefore, combining all the estimate above and take the summation on $|\alpha|\le K-1$, $|\alpha|+|\beta|\le K$, noticing that $|e^{\frac{\pm\phi}{2}}|\approx 1$, and letting $\eta$ sufficiently small, we conclude that, when $\psi=1$, 
 we conclude that when $\psi=1$, 
\begin{align}\label{48}\notag
	&\quad\,\partial_t\sum_{\pm}\sum_{\substack{|\alpha|\le K-1\\|\alpha|+|\beta|\le K}}\|e^{\frac{\pm\phi}{2}}\psi_{|\alpha|+|\beta|-2}w^{l-|\alpha|-|\beta|}\partial^{\alpha}_\beta(\II-\PP)f\|^2_{L^2_{v,x}}\\
	&\qquad
	+\lambda \sum_{\pm}\sum_{\substack{|\alpha|\le K-1\\|\alpha|+|\beta|\le K}}\|\psi_{|\alpha|+|\beta|-2}e^{\frac{\pm\phi}{2}}(\tilde{a}^{1/2})^ww^{l-|\alpha|-|\beta|}\partial^{\alpha}_\beta (\I-\P)f\|_{L^2_{v,x}}^2\notag\\
	&\lesssim \sum_{|\alpha|\le K-1} \|\partial^\alpha\nabla_x\phi\|_{L^2_{x}}^2 +(\E^{1/2}_{K,l}(t)+\E_{K,l}(t))\D_{K,l}(t)+\|\partial_t\phi\|_{L^\infty}\E^h_{K,l}(t)\\
	&\qquad+\sum_{|\alpha|\le K-1}\|\partial^\alpha\nabla_x(a_\pm,b,c)\|^2_{L^2_x}+\sum_{|\alpha|\le K}\|\psi_{|\alpha|-2}(\tilde{a}^{1/2})^w\partial^{\alpha}(\I-\P)f\|^2_{L^2_{v,x}}.\notag
\end{align}
The redundant terms on the right hand side will be eliminated by using \eqref{48a}.

\paragraph{Step 5.}

We are able to prove this theorem by taking the proper linear combination of those estimates obtained in the above steps as follows. Taking combination $C_1\times\eqref{48a}+\eqref{74}+\eqref{48}$ with sufficiently large $C_1>0$, we have 
\begin{align}\label{58}
	&\notag\quad\,\partial_t\E_{K,l}(t)
\notag + C_1\lambda \sum_{\pm}\sum_{|\alpha|\le K}\|\psi_{|\alpha|-2}e^{\frac{\pm\phi}{2}}(\tilde{a}^{1/2})^w\partial^{\alpha} f_\pm\|_{L^2_{v,x}}^2\\
&\notag\qquad + C_1\lambda\sum_{|\alpha|\le K-1} \|\partial^\alpha\nabla_x(a_\pm,b,c)\|^2_{L^2_x}+C_1\lambda\|a_+-a_-\|^2_{L^2_x}+C_1\lambda\sum_{|\alpha|\le K-1}\|\partial^\alpha E\|^2_{L^2_x}\\
&\qquad
+\lambda \sum_{\pm}\sum_{\substack{|\alpha|+|\beta|\le K}}\|\psi_{|\alpha|+|\beta|-2}e^{\frac{\pm\phi}{2}}(\tilde{a}^{1/2})^ww^{l-|\alpha|-|\beta|}\partial^{\alpha}_\beta (\II-\PP)f\|_{L^2_{v,x}}^2\notag\\
&\lesssim \|\partial_t\phi\|_{L^\infty}\E_{K,l}(t)+ (\E^{1/2}_{K,l}(t)+\E_{K,l}(t))\D_{K,l}(t). 
\end{align}
where 
\begin{align}
	\E_{K,l}(t)&=\notag \frac{C_1}{2}\sum_{\pm}\sum_{|\alpha|\le K}\|\psi_{|\alpha|-2}e^{\frac{\pm\phi}{2}}\partial^{\alpha} f_\pm\|_{L^2_{v,x}} +\sum_{\pm}\sum_{\substack{|\alpha|+|\beta|\le K}}\|e^{\frac{\pm\phi}{2}}w^{l-|\alpha|-|\beta|}\partial^{\alpha}_\beta(\II-\PP)f\|^2_{L^2_{v,x}}\\&\qquad+
	C_1\sum_{|\alpha|\le K}\|\psi_{|\alpha|-2}\partial^{\alpha}\nabla_x\phi\|_{L^2_x}^2+\kappa\E^{(1)}_{K}.
\end{align}The second to fourth terms on the left hand side of \eqref{58} is larger than $D_{K,l}$.
Notice that here for the term $\sum_{|\alpha|= K}\|\partial^\alpha\nabla_x\phi\|$, we use the fact that 
\begin{align}\label{78}
	\|\psi_{K-2}\nabla^{K+1}_x\phi\|^2_{L^2_x}&\lesssim \|\psi_{K-2}\nabla^{K+1}_x\Delta^{-1}_x(a_+-a_-)\|^2_{L^2_x}
	\lesssim \|\psi_{K-3}\nabla_x^{K-1}(a_+,a_-)\|^2_{L^2_x},
\end{align}and hence can be eliminated by using $C_1\times\eqref{48a}$. 
Noticing \eqref{27a} and $\kappa<<1$, it's direct to see that 
\begin{align*}
	\E_{K,l}(t)\notag &\approx \sum_{|\alpha|\le K}\|\psi_{|\alpha|-2}\partial^\alpha E(t)\|^2_{L^2_x}+\sum_{|\alpha|\le K}\|\psi_{|\alpha|-2}\partial^\alpha\P f\|^2_{L^2_{v,x}}+\sum_{|\alpha|\le K}\|\psi_{|\alpha|-2}\partial^{\alpha}(\I-\P)f\|^2_{L^2_{v,x}}\\
	&\qquad+\sum_{\substack{|\alpha|+|\beta|\le K}}\|\psi_{|\alpha|+|\beta|-2}w^{l-|\alpha|-|\beta|}\partial^\alpha_\beta(\I-\P) f\|^2_{L^2_{v,x}}
\end{align*}
Recalling the $a$ $priori$ assumption \eqref{priori}, the desired estimate \eqref{42a} follows directly from \eqref{58}. 
\end{proof}

For the higher order instant energy, we have the following theorem. 
\begin{Thm}\label{thm42}For any $l\ge K$, there is $\E^h_{K,l}(t)$ satisfying \eqref{Defeh} such that for any $0\le t\le T$,
	\begin{align}
		\partial_t\E^h_{K,l}+\lambda \D_{K,l}(t)\lesssim \|\partial_t\phi\|_{L^\infty}\E^h_{K,l}(t)+\|\nabla_x(a_\pm,b,c)\|^2_{L^2_x}, 
	\end{align}where $\D_{K,l}$ is defined by \eqref{Defd}. 
\end{Thm}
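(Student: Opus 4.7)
The plan is to imitate the three-step energy argument in the proof of Theorem \ref{thm41}, modified so that the zeroth-order macroscopic piece $\|\P f\|^2_{L^2_{v,x}}$ is excluded from the energy functional, and with the higher-order macroscopic estimate \eqref{24a} replacing \eqref{24}. I would carry out Step 1 of that proof only in the range $1\le|\alpha|\le K$, treating the terms $I_1$--$I_7$ exactly as before and invoking identity \eqref{43} only for $|\alpha|\ge 1$, which supplies control of $\partial_t\sum_{1\le|\alpha|\le K}\|e^{\pm\phi/2}\partial^\alpha f_\pm\|^2_{L^2_{v,x}}$ together with $\partial_t\|\partial^\alpha\nabla_x\phi\|^2_{L^2_x}$ for $1\le|\alpha|\le K$. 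The $|\alpha|=0$ companion $\partial_t\|\nabla_x\phi\|^2_{L^2_x}$ is produced directly from the Poisson identity \eqref{34a}, giving $2(\nabla_x\phi,G)_{L^2_x}$, and bounded by $\eta\|E\|^2_{L^2_x}+C_\eta\|G\|^2_{L^2_x}$, with both pieces absorbed into the $(\I-\P)f$ dissipation and the $\|E\|^2_{L^2_x}$ produced by \eqref{24a}. Steps 2 and 3 of the proof of Theorem \ref{thm41} are applied verbatim; in particular, the $|\alpha|=|\beta|=0$ case of Step 3 yields $\partial_t\|e^{\pm\phi/2}w^l(\I-\P)f\|^2_{L^2_{v,x}}$, which accounts for the zeroth-order $(\I-\P)f$ contribution to $\E^h_{K,l}$ without introducing any $\P f$ piece.

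The energy functional would then be assembled as the sum of a large constant $C_1$ times $\sum_{\pm}\sum_{1\le|\alpha|\le K}\|e^{\pm\phi/2}\partial^\alpha f_\pm\|^2_{L^2_{v,x}}$ from Step 1, the weighted mixed-derivative pieces $\sum_{\pm}\sum_{|\alpha|+|\beta|\le K}\|e^{\pm\phi/2}w^{l-|\alpha|-|\beta|}\partial^\alpha_\beta(\I-\P)f\|^2_{L^2_{v,x}}$ from Step 3, $C_1\sum_{|\alpha|\le K}\|\partial^\alpha\nabla_x\phi\|^2_{L^2_x}$, and a small constant $\kappa$ times the macroscopic interaction functional $\E^{(1)}_{K,h}$ of Lemma \ref{Lemma31}. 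The equivalence with \eqref{Defeh} is immediate by inspection. Adding $\kappa$ times \eqref{24a} contributes $\sum_{|\alpha|\le K-1}\|\partial^\alpha E\|^2_{L^2_x}$, $\|\nabla_x(a_+-a_-)\|^2_{L^2_x}$, and the higher-order $\sum_{2\le|\alpha|\le K}\|\nabla_x^{|\alpha|}(a_\pm,b,c)\|^2_{L^2_x}$ to the dissipation, while the topmost $\|\nabla_x^{K+1}\phi\|^2_{L^2_x}$ piece in $\E^h_{K,l}$ is absorbed via the Poisson identity $-\Delta_x\phi=a_+-a_-$ exactly as in \eqref{78}.

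The main obstacle, and precisely what forces the $\|\nabla_x(a_\pm,b,c)\|^2_{L^2_x}$ remainder in the statement, is that the macroscopic dissipation in \eqref{24a} cannot generate $\|\nabla_x(a_\pm,b,c)\|^2_{L^2_x}$ on its own: its sum $\sum_{1\le|\alpha|\le K-1}\|\partial^\alpha\nabla_x(a_\pm,b,c)\|^2_{L^2_x}$ begins at $|\alpha|=1$ and hence contributes only norms of order $\ge 2$, while $\|\nabla_x(a_+-a_-)\|^2_{L^2_x}$ covers only the difference. Since $\|\P f\|^2_{L^2_{v,x}}$ has been excluded from the energy, there is no Poincar\'e-type mechanism available to bridge the gap, and $\|\nabla_x(a_\pm,b,c)\|^2_{L^2_x}$ must therefore be left on the right-hand side. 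The nonlinear trilinear contributions of the form $(\E^h_{K,l})^{1/2}\D_{K,l}$ and $\E^{1/2}_{K,l}\D_{K,l}$ coming from $\Gamma_\pm$ via Lemma \ref{lemmag} and from the Vlasov terms via \eqref{40} and \eqref{42} are absorbed by the a priori smallness \eqref{priori}, while the $\|\partial_t\phi\|_{L^\infty_x}\E^h_{K,l}$ term arises from the bound \eqref{46} applied to the $\partial_t\phi$-dependent remainders in \eqref{37} and \eqref{99c}.
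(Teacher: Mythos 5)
There is a genuine gap in your proposal: you omit the zeroth-order microscopic energy estimate that the paper obtains by taking the inner product of \eqref{36} with $e^{\pm\phi}(\II-\PP)f$ at $\alpha=\beta=0$ (the estimate \eqref{64} producing $L_1,\dots,L_{11}$). This is not a cosmetic omission. When you carry out Step~3 ``verbatim,'' the resulting bound \eqref{48} has on its right-hand side the term $C_\eta\sum_{|\alpha|\le K}\|(\tilde{a}^{1/2})^w\partial^{\alpha}(\I-\P)f\|^2_{L^2_{v,x}}$ coming from $J_7,J_8$, and the constant $C_\eta$ is necessarily large (you took $\eta$ small to absorb the weighted part back into the left-hand side). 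The $|\alpha|\ge 1$ piece of this is absorbed by taking $C_1$ large on \eqref{63}, but nothing in your combination $C_1(\eqref{63}+\kappa\eqref{24a})+\eqref{74}+\eqref{48}$ supplies $\|(\tilde{a}^{1/2})^w(\I-\P)f\|^2_{L^2_{v,x}}$ on the \emph{left} with a tunable coefficient: the $\alpha=\beta=0$ contribution from \eqref{48} itself is only $\lambda\|(\tilde{a}^{1/2})^ww^{l}(\I-\P)f\|^2$ with $\lambda$ small, which cannot beat $C_\eta$; \eqref{63} starts at $|\alpha|\ge1$; \eqref{24a} contributes only macroscopic quantities and electric field; and Step~2 is restricted to $1\le|\alpha|\le K$. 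The paper's \eqref{64} provides precisely the free supply of $\lambda\|(\tilde{a}^{1/2})^w(\I-\P)f\|^2_{L^2_{v,x}}$ dissipation, multiplied by a large $C_2$ to dominate. Without some equivalent of \eqref{64}, the combination does not close and you cannot conclude $\partial_t\E^h_{K,l}+\lambda\D_{K,l}\lesssim\cdots$.

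A secondary, related issue is your handling of $\partial_t\|\nabla_x\phi\|^2_{L^2_x}$. Your idea of using the Poisson identity \eqref{34a} to write $\partial_t\|\nabla_x\phi\|^2_{L^2_x}=-2\Re(E,G)_{L^2_x}$ and then bounding this by $\eta\|E\|^2_{L^2_x}+C_\eta\|G\|^2_{L^2_x}$ is fine in principle, but $\|G\|^2_{L^2_x}\lesssim\|(\tilde{a}^{1/2})^w(\I-\P)f\|^2_{L^2_{v,x}}$ must then again be absorbed by the zeroth-order microscopic dissipation --- which you have not produced. In the paper, this is sidestepped entirely because $L_5$ in \eqref{64} yields the exact identity $L_5=\tfrac{1}{2}\partial_t\|\nabla_x\phi\|^2_{L^2_x}$ at no cost, so the $|\alpha|=0$ electric energy enters the functional for free, while the very same estimate \eqref{64} also delivers the $L_6\ge\lambda\|(\tilde{a}^{1/2})^w(\I-\P)f\|^2_{L^2_{v,x}}$ dissipation needed to close. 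In short, you have correctly identified why $\|\nabla_x(a_\pm,b,c)\|^2_{L^2_x}$ must remain on the right-hand side, but you have not noticed that excluding $|\alpha|=0$ from Step~1 destroys the absorption of the zeroth-order microscopic dissipation as well, and a separate device is required.
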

\begin{proof}
	By letting $|\alpha|\ge1$ in \eqref{45}, repeating the calculations from \eqref{45} to \eqref{47}, we can instead obtain 
	\begin{align}\label{63}
				&\notag\quad\,\frac{1}{2}\partial_t\sum_{\pm}\sum_{1\le|\alpha|\le K}\Big(\|\psi_{|\alpha|-2}e^{\frac{\pm\phi}{2}}\partial^{\alpha} f_\pm\|_{L^2_{v,x}} +
		\|\psi_{|\alpha|-2}\partial^{\alpha}\nabla_x\phi\|_{L^2_x}^2\Big)\\&\qquad + \lambda \sum_{\pm}\sum_{1\le|\alpha|\le K}\|\psi_{|\alpha|-2}e^{\frac{\pm\phi}{2}}(\tilde{a}^{1/2})^w\partial^{\alpha} (\II-\PP)f\|_{L^2_{v,x}}^2
		\lesssim \|\partial_t\phi\|_{L^\infty}\E^h_{K,l}(t)+\E^{1/2}_{K,l}(t)\D_{K,l}(t),
	\end{align}
Notice that here the first right-hand term contains $\E^h_{K,l}$ since there's at least one derivative on $x$. 
In order to eliminate the term $\|(\tilde{a}^{1/2})^w(\I-\P)f\|^2_{L^2_{v,x}}$ in \eqref{48}, we shall take the inner product of \eqref{36} with $e^{\pm\phi}(\II-\PP)f$ over $\R^3_v\times\R^3_x$ and $\alpha=\beta=0$. 
\begin{align*}
		&\quad\,\big(\partial_t(\II-\PP)f,e^{\pm\phi}(\II-\PP)f\big)_{L^2_{v,x}} + \big(v_i\partial^{e_i}(\II-\PP)f,e^{\pm\phi}(\II-\PP)f\big)_{L^2_{v,x}} \\
	&\qquad\pm \big(\frac{1}{2}\partial^{e_i}\phi v_i(\II-\PP)f,e^{\pm\phi}(\II-\PP)f\big)_{L^2_{v,x}} 
	\mp\big(\partial^{e_i}\phi\partial_{e_i}(\II-\PP)f,e^{\pm\phi}(\II-\PP)f\big)_{L^2_{v,x}}\\&\qquad \pm \big(\partial^{e_i}\phi v_i\mu^{1/2},e^{\pm\phi}(\II-\PP)f\big)_{L^2_{v,x}} -  \big(L_\pm (\I-\P)f,e^{\pm\phi}(\II-\PP)f\big)_{L^2_{v,x}} \\
	&= -\big(\partial_t\PP f,e^{\pm\phi}(\II-\PP)f\big)_{L^2_{v,x}} + \big(v_i\partial^{e_i}\PP f,e^{\pm\phi}(\II-\PP)f\big)_{L^2_{v,x}} \mp \big(\frac{1}{2}\partial^{e_i}\phi v_i\PP f,e^{\pm\phi}(\II-\PP)f\big)_{L^2_{v,x}} \\ &\qquad\mp\big(\partial^{e_i}\phi\partial^{}_{e_i}\PP f,e^{\pm\phi}(\II-\PP)f\big)_{L^2_{v,x}}
	+
	\big(\partial^{\alpha}_\beta \Gamma_{\pm}(f,f),e^{\pm\phi}(\II-\PP)f\big)_{L^2_{v,x}}.
\end{align*}
As before, we denote these terms with summation over $\pm$ by $L_1,\dots,L_{11}$ and estimate them term by term. 
\begin{equation*}
	L_1\ge \frac{1}{2}\partial_t\sum_{\pm}\|e^{\frac{\pm\phi}{2}}(\II-\PP)f\|^2_{L^2_{v,x}}-C\|\partial_t\phi\|_{L^\infty_x}\E^h_{K,l}(t). 
\end{equation*}
The same as \eqref{48aa}, by integration by parts on $x$, $L_2+L_3=0$. Same as \eqref{50}, $L_4=0$. 
Similar to \eqref{43}, 
\begin{align*}
	L_5 = \frac{1}{2}\partial_t\|\nabla_x\phi\|^2_{L^2_x}. 
\end{align*}
By Lemma \ref{lemmaL}, 
\begin{align*}
	L_6 \ge \lambda\|(\tilde{a}^{1/2})^w(\I-\P)f\|^2_{L^2_{v,x}}. 
\end{align*}
Recalling the conservation laws \eqref{19}, one has 
\begin{align*}
	L_7 \le \frac{\lambda}{4}\|(\tilde{a}^{1/2})^w(\I-\P)f\|^2_{L^2_{v,x}}+C\big(\|\nabla_x(a_\pm,b,c)\|^2_{L^2_x}+\|\nabla_x\phi\|^2_{L^2_x}+\|(\tilde{a}^{1/2})^w\nabla_x(\I-\P)f\|^2_{L^2_{v,x}}+\E_{K,l}\D_{K,l}\big).
\end{align*}
By Cauchy-Schwarz inequality, 
\begin{align*}
	L_8\le \frac{\lambda}{4}\|(\tilde{a}^{1/2})^w(\I-\P)f\|^2_{L^2_{v,x}}+C\|\nabla_x(a_\pm,b,c)\|^2_{L^2_{v,x}}. 
\end{align*}
Similar to the calculation on $J_9,J_{10},J_{11}$, we have that $L_9,L_{10},L_{11}$ are bounded above by $\E^{1/2}_{K,l}\D_{K,l}$. 
Combining the above estimate, we have 
\begin{align}\label{64}
	&\notag\frac{1}{2}\partial_t\sum_{\pm}\|e^{\frac{\pm\phi}{2}}(\II-\PP)f\|^2_{L^2_{v,x}}+\frac{1}{2}\partial_t\|\nabla_x\phi\|^2_{L^2_x}+\lambda\|(\tilde{a}^{1/2})^w(\I-\P)f\|^2_{L^2_{v,x}}\\
	&\lesssim \|\partial_t\phi\|_{L^\infty_x}\E^h_{K,l}(t)+\|\nabla_x(a_\pm,b,c)\|^2_{L^2_{v,x}}+\|(\tilde{a}^{1/2})^w\nabla_x(\I-\P)f\|^2_{L^2_{v,x}}+(\E^{1/2}_{K,l}+\E_{K,l})\D_{K,l}
\end{align}

Now we use combination $C_2\times(C_1\times(\eqref{63}+\kappa\times\eqref{24a})+\eqref{64})+\eqref{74}+\eqref{48}$ with $\kappa<<1$. Taking $C_1>>\frac{1}{\kappa}$ sufficiently large then taking $C_2$ sufficiently large, we obtain that when $\psi=1$, 
\begin{align*}
			&\partial_t\E^h_{K,l}(t) + C_1C_2\kappa\lambda\sum_{1\le|\alpha|\le K-1} \|\partial^\alpha\nabla_x(a_\pm,b,c)\|^2_{L^2_x}+C_1C_2\kappa\|\nabla_x(a_+-a_-)\|^2_{L^2_x}\\
			&\qquad +C_1C_2\sum_{|\alpha|\le K-1}\|\partial^\alpha E\|^2_{L^2_x}+ C_1C_2\lambda \sum_{\pm}\sum_{1\le|\alpha|\le K}\|\psi_{|\alpha|-2}e^{\frac{\pm\phi}{2}}(\tilde{a}^{1/2})^w\partial^{\alpha} f_\pm\|_{L^2_{v,x}}^2\\
			&\qquad+C_2\lambda\|(\tilde{a}^{1/2})^w(\I-\P)f\|^2_{L^2_{v,x}}
			+\lambda \sum_{\pm}\sum_{\substack{|\alpha|+|\beta|\le K}}\|\psi_{|\alpha|+|\beta|-2}e^{\frac{\pm\phi}{2}}(\tilde{a}^{1/2})^ww^{l-|\alpha|-|\beta|}\partial^{\alpha}_\beta (\I-\P)f\|_{L^2_{v,x}}^2\notag\\
			&\lesssim\|\partial_t\phi\|_{L^\infty}\E^h_{K,l}(t)+(\E^{1/2}_{K,l}(t)+\E_{K,l}(t))\D_{K,l}(t)+\|\nabla_x(a_\pm,b,c)\|^2_{L^2_x},
\end{align*}
	where left-hand terms except the first one adding $\|\nabla_x(a_\pm,b,c)\|^2_{L^2_x}$ is larger than $\D_{K,l}$ and 
	\begin{align*}
		\E^h_{K,l}(t) &= C_1C_2\kappa\E^{(1)}_{K,h}+\frac{C_1C_2}{2}\sum_{\pm}\sum_{1\le|\alpha|\le K}\Big(\|\psi_{|\alpha|-2}e^{\frac{\pm\phi}{2}}\partial^{\alpha} f_\pm\|_{L^2_{v,x}} +
		C_1C_2\|\psi_{|\alpha|-2}\partial^{\alpha}\nabla_x\phi\|_{L^2_x}^2\Big)\\
		&\qquad+\frac{C_2}{2}\sum_{\pm}\|e^{\frac{\pm\phi}{2}}(\II-\PP)f\|^2_{L^2_{v,x}}+\frac{C_2}{2}\|\nabla_x\phi\|^2_{L^2_x}\\
		&\qquad+\sum_{\pm}\sum_{\substack{|\alpha|+|\beta|\le K}}\|e^{\frac{\pm\phi}{2}}\psi_{|\alpha|+|\beta|-2}w^{l-|\alpha|-|\beta|}\partial^{\alpha}_\beta(\II-\PP)f\|^2_{L^2_{v,x}}.
	\end{align*}
Noticing $\kappa<<1$ is sufficiently small, it's direct to verify \eqref{Defeh}. 
At last, by using the $a$ $priori$ assumption \eqref{priori}, we obtain the desired estimate. 
\end{proof}

The idea of the following proof is similar to the Boltzmann case \cite{Strain2012}.
\begin{Thm}Let $T\in(0,\infty]$.
	Consider the solution $f$ to Cauchy problem \eqref{7}-\eqref{9}.
Let $i=1$ if $0<s<\frac{1}{2}$ and $i=2$ if $\frac{1}{2}\le s<1$. Assume $K\ge i+1$, $l\ge\max\{K,\gamma/2+s+2,\gamma+2s+i+1\}$. Define 
	\begin{equation}\label{79}
		X(t) = \sup_{0\le\tau\le t}(1+\tau)^{3/2}\E_{K,l}(\tau)+\sup_{0\le\tau\le t}(1+t)^{5/2}\E^h_{K,l}(\tau),
	\end{equation}
and
\begin{equation*}
	\epsilon_0 = (\E_{K,l}(0))^{1/2}+\|f_0\|_{Z_1}+\|E_0\|_{L^1}.
\end{equation*}
Then under the $a$ $priori$ assumption \eqref{priori} for $\delta_0>0$ sufficiently small, we have 
	\begin{equation}\label{80}
		X(t)\lesssim \epsilon^2_0+X^{3/2}(t)+X^2(t),
	\end{equation}
for any $0\le t\le T$. 
\end{Thm}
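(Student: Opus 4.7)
The idea is to combine the energy-dissipation inequalities of Theorems \ref{thm41} and \ref{thm42} with the linear time-decay of Theorem \ref{homogen} through a Duhamel representation. The differential inequality \eqref{42a} alone does not yield any time-decay rate, because the dissipation $\D_{K,l}$ omits the pure hydrodynamic piece $\|\P f\|_{L^2_{v,x}}$ (no $x$-derivative), which must be recovered externally by the linear-decay machinery. The time weight $(1+t)^{3/2}$ in $X(t)$ then acts as a bookkeeping device that converts differential estimates into the desired bound \eqref{80}.

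First I would observe that, under the a priori assumption $X(t) \le \delta_0$, the bound \eqref{34} yields $\|\partial_t\phi\|_{L^\infty_x}(t) \lesssim X^{1/2}(t)(1+t)^{-5/4}$, which is integrable on $[0,\infty)$. Consequently the forcing term $\|\partial_t\phi\|_{L^\infty_x}\E_{K,l}$ in \eqref{42a} is perturbative: Gronwall contributes an acceptable multiplicative factor and an $X$-dependent correction. The analogous forcing $\|\nabla_x(a_\pm,b,c)\|^2_{L^2_x}$ on the right of Theorem \ref{thm42} will be absorbed using the low-order decay of $\E_{K,l}$ proved first, producing precisely the two-tier coupling that motivates the definition of $X(t)$.

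Next, write \eqref{7}-\eqref{8} as $\partial_t f = Bf + \mathcal{N}(f)$, with $B$ the linear operator of \eqref{39} and
\[
\mathcal{N}_\pm(f) = \mp\tfrac{1}{2}\nabla_x\phi\cdot v f_\pm \pm \nabla_x\phi\cdot\nabla_v f_\pm + \Gamma_\pm(f,f).
\]
The Duhamel formula $f(t) = e^{tB}f_0 + \int_0^t e^{(t-\tau)B}\mathcal{N}(\tau)\,d\tau$ combined with Theorem \ref{homogen} applied with $m=0$ and $m=1$ yields
\[
\|w^l \nabla_x^m f(t)\|_{L^2_{v,x}} + \|\nabla_x^m E(t)\|_{L^2_x} \lesssim (1+t)^{-\sigma_m}\epsilon_0 + \int_0^t (1+t-\tau)^{-\sigma_m}\bigl(\|w^l\mathcal{N}\|_{Z_1} + \|w^l\mathcal{N}\|_{L^2_{v,x}}\bigr)\,d\tau.
\]
Using Lemmas \ref{lemmat}-\ref{lemmag} together with the Banach algebra inequality \eqref{14}, the source obeys $\|w^l \mathcal{N}(\tau)\|_{Z_1 \cap L^2_{v,x}} \lesssim \E_{K,l}(\tau) + \E_{K,l}^{1/2}(\tau)\D_{K,l}^{1/2}(\tau)$, where one factor of $f$ is placed in $L^1_x$ via a macroscopic quantity and the other in $L^2_x$ (propagating the $Z_1$ structure from the initial data). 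Substituting the a priori bound $\E_{K,l}(\tau) \le X(\tau)(1+\tau)^{-3/2}$ and evaluating the convolution (which retains order $(1+t)^{-\sigma_m}$ since $3/2>1$) produces the desired decay for $\|w^l\nabla_x^m f(t)\| + \|\nabla_x^m E(t)\|$ with constants $\lesssim \epsilon_0 + X(t) + X^{3/2}(t)$.

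To close the full functional, I would multiply \eqref{42a} by the time weight $(1+t)^{3/2}$ and integrate, so that the boundary term $(1+t)^{3/2}\E_{K,l}(t)$ is offset by the commutator $\tfrac{3}{2}\int_0^t(1+s)^{1/2}\E_{K,l}(s)\,ds$; this last integral is handled by splitting $\E_{K,l}$ into the $\D_{K,l}$-piece (absorbed by the integrated dissipation) and the missing $\|\P f\|^2$-piece (controlled by the Duhamel bound above, which gives the required $(1+s)^{-3/2}$ decay). An identical argument with weight $(1+t)^{5/2}$ closes $\E^h_{K,l}$, the forcing $\|\nabla_x(a_\pm,b,c)\|^2$ being absorbed via the already-established low-order decay. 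The principal obstacle is the bookkeeping for $\|w^l\mathcal{N}\|_{Z_1}$: the $Z_1$ norm forces one factor of the product to lie in $L^1_x$, so the velocity-weighted polynomials $l\ge \max\{K,\gamma/2+s+2,\gamma+2s+i+1\}$ must be chosen so that the weight $w^l$ is absorbed by the $v$-moment on $\Gamma$ (via Lemma \ref{Lem22}) and by the transport factor $v f_\pm$; the nonlinear terms $X^{3/2}$ and $X^2$ in \eqref{80} arise respectively from the Duhamel integrand paired with the linearly-decaying initial data contribution, and from the purely quadratic contribution where both factors carry weighted decay.
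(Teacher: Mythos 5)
The Duhamel-plus-linear-decay framework you describe is indeed the backbone of the paper's argument, and your identification of the two-tier coupling (use the low-order decay of $\E_{K,l}$ to feed $\|\nabla_x(a_\pm,b,c)\|^2$, use Theorem \ref{homogen} with $m=0,1$ for the macroscopic pieces) is accurate. However, the closing step you propose contains a genuine gap. You propose to multiply \eqref{42a} by $(1+t)^{3/2}$, integrate, and control the commutator term
\[
\tfrac{3}{2}\int_0^t (1+s)^{1/2}\,\E_{K,l}(s)\,ds
\]
by splitting $\E_{K,l}$ into a $\D_{K,l}$-piece and a $\|\P f\|^2_{L^2_{v,x}}$-piece, claiming that the latter is handled because $\|\P f(s)\|^2$ has $(1+s)^{-3/2}$ decay. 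But $(1+s)^{1/2}\cdot (1+s)^{-3/2} = (1+s)^{-1}$, so that contribution is $\int_0^t(1+s)^{-1}\,ds \approx \log(1+t)$, which is unbounded. Time-weighted energy integration with the critical weight $(1+t)^{3/2}$ therefore does not close without an additional interpolation (e.g.\ working with $(1+t)^{3/2-\epsilon}$ and then bootstrapping), a step that is absent from your proposal.

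The paper avoids this entirely and closes the estimate without any time-weighted energy integration. The key structural observations it exploits are: (i) for hard potential, $\|\cdot\|_{L^2_v}\lesssim\|(\tilde{a}^{1/2})^w(\cdot)\|_{L^2_v}$, and the only piece of $\E^h_{K,l}$ not already present in $\D_{K,l}$ is $\|\partial^K E\|^2$, which is controlled via the Poisson equation by $\|\nabla_x^{K-1}(a_\pm,b,c)\|^2\subset\D_{K,l}$. Hence Theorem \ref{thm42} yields the forced exponential-decay ODE $\partial_t\E^h_{K,l}+\lambda\E^h_{K,l}\lesssim \|\partial_t\phi\|_{L^\infty}\E^h_{K,l}+\|\nabla_x(a_\pm,b,c)\|^2$, and Gronwall with the kernel $e^{-\lambda(t-\tau)}$ converts the $(1+\tau)^{-5/2}$ decay of the source (obtained from your Duhamel step) directly into $\E^h_{K,l}(t)\lesssim(\epsilon_0^2+X^{3/2}+X^2)(1+t)^{-5/2}$; no time weight and no logarithmic loss. (ii) Then $\E_{K,l}\approx \|\P f\|^2_{L^2_{v,x}}+\E^h_{K,l}$ as an algebraic identity, and the Duhamel bound with $m=0$ supplies $\|\P f(t)\|\lesssim(\epsilon_0+X)(1+t)^{-3/4}$, whence $\E_{K,l}\lesssim(\epsilon_0^2+X^{3/2}+X^2)(1+t)^{-3/2}$. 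Combining this with the uniform-in-time Gronwall bound $\E_{K,l}(t)\lesssim\epsilon_0^2$ from Step 1 gives \eqref{80}. You should replace your time-weighted integration paragraph with this exponential-decay Gronwall plus algebraic-identity argument, which is both what the paper does and what actually closes without loss.
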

\begin{proof}
{Step 1}.	From \eqref{34} and \eqref{42a}, we have that for $l\ge 0$, 
	\begin{align}\label{81}
		\|\partial_t\phi\|_{L^\infty_x}&\lesssim (\E^h_{K,l}(t))^{1/2}\lesssim(1+t)^{-5/4}X^{1/2}(t)\lesssim\delta^{1/2}_0(1+t)^{-5/4},\\
		&\partial_t\E_{K,l}(t)+D_{K,l}(t)\lesssim \|\partial_t\phi\|_{L^\infty_x}\E_{K,l}(t).\notag
	\end{align}
By using Gronwall's inequality, for $0\le t\le T$, 
\begin{align}\label{81a}
	\E_{K,l}(t)\lesssim \E_{K,l}(0)e^{C\int^t_0\|\partial_t\phi(\tau)\|_{L^\infty_x}}\,d\tau\lesssim \E_{K,l}(0)\lesssim \epsilon_0^2.
\end{align}
	
	Step 2. To prove the decay of $\E^h_{K,l}$, we use Theorem \ref{thm42} to get 
	\begin{align*}
		\partial_t\E^h_{K,l}+\lambda \D_{K,l}(t)\lesssim \|\partial_t\phi\|_{L^\infty}\E^h_{K,l}(t)+\|\nabla_x(a_\pm,b,c)\|^2, 
	\end{align*}
We will use the trick in \cite{Strain2012}.
Noticing the term $\|\partial^K E\|_{L^2_x}^2$ inside $\E^{h}_{K,l}$ is bounded above by 
\begin{align*}
	\|\nabla^{K+1}_x\phi\|^2_{L^2_x}=\|\nabla_x^{K+1}\Delta^{-1}_x(a_+-a_-)\|^2_{L^2_x}
	\le \|\nabla_x^{K-1}(a_\pm,b,c)\|^2_{L^2_x},
\end{align*}
which is a term in $\D_{K,l}$. Also, for hard potential,  $\|\cdot\|_{L^2_v}\lesssim\|(\tilde{a}^{1/2})^w(\cdot)\|_{L^2_v}$. Hence,
\begin{align*}
	\partial_t\E^h_{K,l}+\lambda \E^h_{K,l}(t)\lesssim \|\partial_t\phi\|_{L^\infty}\E^h_{K,l}(t)+\|\nabla_x(a_\pm,b,c)\|^2.
\end{align*}
By Gronwall's inequality, 
\begin{align}\label{82a}
	\E^h_{K,l}(t)\lesssim e^{-\lambda t}\E^h_{K,l}(0)+\int^t_0\,d\tau e^{-\lambda (t-\tau)}\big(\|\partial_t\phi(\tau)\|_{L^\infty}\E^h_{K,l}(\tau)+\|\nabla_x(a_\pm,b,c)(\tau)\|^2\big).
\end{align}
We will need to deal with the terms inside the time integral. By \eqref{81}, for $0\le \tau\le t$, 
\begin{align*}
	\|\partial_t\phi(\tau)\|_{L^\infty}\E^h_{K,l}(\tau)\lesssim (\E^h_{K,l}(t))^{3/2}\lesssim (1+t)^{-15/4}X^{3/2}(t).
\end{align*}
We claim that for $0\le t\le T$, 
\begin{align}\label{82}
	\|\nabla_x(a_\pm,b,c)(t)\|^2\lesssim (\epsilon_0+X(t))(1+t)^{-5/4}
\end{align}
Recalling \eqref{42b}, by Duhamel's principle, we can write the solution to \eqref{7} as 
\begin{align*}
	f(t) = e^{tB}f_0+\int^t_0e^{(t-\tau)B}g(\tau)\,d\tau,
\end{align*}where $g=(g_+,g_-)$ is defined by \eqref{22a}. Applying Theorem \ref{homogen} with $m=1$, $l=0$, $\sigma_1= \frac{5}{4}$ therein, we have 
\begin{align}\label{83}
	\|\nabla_x\P f(t)\|_{L^2_{v,x}} \lesssim\|\nabla_xf(t)\|_{L^2_{v,x}} \lesssim \epsilon_0(1+t)^{-5/4}+\int^t_0(1+t-\tau)^{-5/4}\big(\|g(\tau)\|_{Z_1}+\|\nabla_xg(\tau)\|_{L^2_{v,x}}\big)\,d\tau,
\end{align}where we use the fact that $(g_\pm,\mu^{1/2})=0$.
By using \eqref{12a} and Young's inequality, we have 
\begin{align*}
	\|\Gamma(f,f)(\tau)\|_{Z_1}&\lesssim \int\,dx\|\<v\>^{\gamma/2+s}f(\tau)\|_{L^2_v}\|\<v\>^{\gamma+2s}f(\tau)\|_{H^2_v}\\
	&\lesssim \|\<v\>^{\gamma/2+s}f(\tau)\|_{L^2_{v,x}}\|\<v\>^{\gamma+2s}f(\tau)\|_{H^i_vL^2_x}\\
	&\lesssim \E_{K,l}(\tau),
\end{align*}since $l\ge i+\gamma+2s$. 
On the other hand,
\begin{align*}
	\|\nabla_x\phi\cdot\nabla_vf_\pm(\tau)\|_{Z_1}&\lesssim \|\nabla_x\phi(\tau)\|_{L^2_x}\|\nabla_vf(\tau)\|_{L^2_{v,x}}\lesssim\E_{K,l}(\tau),\\
	\|v\cdot\nabla_x\phi f_\pm(\tau)\|_{Z_1}&\lesssim\|\nabla_x\phi(\tau)\|_{L^2_x}\|vf_\pm(\tau)\|_{L^2_{v,x}}\lesssim\E_{K,l}(\tau).
\end{align*}
Similarly, by using \eqref{13}, 
\begin{align*}\notag
	\|\nabla_x\Gamma(f,f)(\tau)\|_{L^2_{v,x}}&\lesssim \Big\|\|\<v\>^{\gamma/2+s}\nabla_xf(\tau)\|_{L^2_v}\|\<v\>^{\gamma+2s}f(\tau)\|_{H^i_v}+\|\<v\>^{\gamma/2+s}f(\tau)\|_{L^2_v}\|\<v\>^{\gamma+2s}\nabla_xf(\tau)\|_{H^i_v}\Big\|_{L^2_x}\\
	&\lesssim \|\<v\>^{\gamma/2+s}f(\tau)\|_{L^2_{v}H^2_{x}}\|\<v\>^{\gamma+2s}f(\tau)\|_{H^i_vH^1_x}\\
	&\lesssim \E_{K,l}(\tau),\notag
\end{align*}
by \eqref{13}, since $K\ge i+1$, $l\ge \max\{\gamma/2+s+2,\gamma+2s+i+1\}$. 
\begin{align*}
	\|\nabla_x(\nabla_x\phi\cdot\nabla_vf_\pm(\tau))\|_{L^2_{v,x}}&\lesssim \|\nabla_x\phi(\tau)\|_{H^2_x}\|f_\pm(\tau)\|_{H^1_{v}H^1_x}\lesssim\E_{K,l}(\tau)\\
	\|\nabla_x(v\cdot\nabla_x\phi f_\pm)(\tau)\|_{L^2_{v,x}}&\lesssim \|\nabla_x\phi(\tau)\|_{H^1_x}\|vf_\pm(\tau)\|_{L^2_{v}H^1_x}\lesssim\E_{K,l}(\tau).
\end{align*}
Plugging the above estimate into \eqref{83} and using $\E_{K,l}(t)\le (1+t)^{-3/2}X(t)$, we have 
\begin{align}\label{84}
	\|\nabla_xf(t)\|_{L^2_{v,x}} &\lesssim\notag \epsilon_0(1+t)^{-5/4}+X(t)\int^t_0(1+t-\tau)^{-5/4}(1+\tau)^{-3/2}\,d\tau\\
	&\lesssim (\epsilon_0+X(t))(1+t)^{-5/4}.
\end{align}This proves the claim. 
Now \eqref{82a} gives that for $0\le t\le T$, 
\begin{align}\label{85}
	\E^h_{K,l}(t)\notag&\lesssim e^{-\lambda t}\epsilon_0+\int^t_0\,d\tau e^{-\lambda (t-\tau)}\big((1+\tau)^{-15/4}X^{3/2}(t)+(\epsilon_0^2+X^2(t))(1+\tau)^{-5/2}\big)\\
	&\lesssim (\epsilon_0^2+X^{3/2}(t)+X^2(t))(1+t)^{-5/2}. 
\end{align}

Step 3. By using the same way proving \eqref{84} and applying $m=0$, $\sigma_0=\frac{3}{4}$ instead, we can obtain 
\begin{align}\label{86}
	\|f\|_{L^2_{v,x}}\lesssim (\epsilon_0+X(t))(1+t)^{-3/4}.
\end{align}
Since $\E_{K,l}(t)\approx\|\P f\|^2_{L^2_x}+\E^h_{K,l}(t)$, we have from \eqref{85} and \eqref{86} that 
\begin{align}\label{87}
	\E_{K,l}(t)\approx (\epsilon_0^2+X^{3/2}(t)+X^2(t))(1+t)^{-3/2}. 
\end{align}
Now the desired estimate \eqref{80} follows from \eqref{81a}, \eqref{85} and \eqref{87}. This completes the proof. 
\end{proof}

\begin{proof}[Proof of Theorem \ref{main1}]
	It follows immediate from the $a$ $priori$ estimate \eqref{80} that $X(t)\lesssim \epsilon^2_0$ holds true for any $t\ge 0$, whenever $\epsilon_0$ is sufficiently small. The rest is to prove the local existence and uniqueness of solutions in terms of the energy norm $\E_{K,l}$ and the non-negativity of $\F_\pm=\mu+\mu^{1/2}f$. One can use the iteration on system 
	\begin{equation}
		\left\{\begin{aligned}
			&\partial_tf^{n+1}_\pm+v\cdot\nabla_xf^{n+1}_\pm\mp\nabla_x\phi^n\cdot\nabla_xf^{n+1}_\pm\pm\frac{1}{2}\nabla_x\phi^n\cdot vf^{n+1}_\pm\pm v\mu^{1/2}\cdot\nabla_x\phi^n-L_\pm f=\Gamma_\pm(f^n,f^{n+1}),\\
			&-\Delta_x\phi^{n+1}=\int_{\R^3}\mu^{1/2}(f^{n+1}_+-f^{n+1}_-)\,dv,\\
			&f^{n+1}|_{t=0}=f_0,
		\end{aligned}\right.
	\end{equation}
and the details of proof are omitted for brevity; see \cite{Guo2012, Strain2013} and \cite{Gressman2011}.
Therefore, the unique global-in-time solution to \eqref{7}-\eqref{9} exists by using continuity argument. The estimate \eqref{15a} follows from $X(t)\lesssim \epsilon^2_0$ directly. 
\end{proof}

\section{Regularity}\label{sec5}
In this section, we will prove that the global-in-time solution we found in the last section is actually smooth in $v,x,t$. 
Let $N>0$ be a large number chosen later. Assume $T\in(0,1]$, $t\in[0,T]$ and 
\begin{equation}\label{93}
	\psi=t^{N},\quad\psi_k=\left\{\begin{aligned}
		1, \text{  if $k\le 0$},\\
		\psi^k, \text{ if $k> 0$}. 
	\end{aligned}\right.
\end{equation}
is this section. Then $|\partial_t\psi_k|\lesssim \psi_{k-1/N}$. Let $f$ be the smooth solution to \eqref{7}-\eqref{9} over $0\le t\le T$ and assume the $a$ $priori$ assumption 
\begin{align}\label{priori1}
	\sup_{0\le t\le T}\E_{K,l}(t)\le \delta_0,
\end{align}where $\delta_0\in(0,1)$ is a suitably small constant. 
\begin{Lem}\label{lem51}Assume $\gamma+2s>0$, $K\ge 3$, $l\ge K$.
	Let $f$ to be the solution to \eqref{7}-\eqref{9} satisfying 
	\begin{align}\label{100}
		\epsilon^2_1 = \E_{2,l}(0) <\infty
	\end{align} is sufficiently small. Then there exists $t_0>0$ independent of $T$ such that for $0< t\le t_0$, 
	\begin{align}
		\E_{K,l}(t)\lesssim \epsilon^2_1.
	\end{align}
\end{Lem}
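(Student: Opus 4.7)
The starting observation is that with $\psi(t) = t^N$, the weights $\psi_{|\alpha|-2}$ and $\psi_{|\alpha|+|\beta|-2}$ appearing in the definition \eqref{Defe} satisfy $\psi_k(0) = 0$ for every $k\ge 1$ and $\psi_k\equiv 1$ for $k\le 0$. Hence every term in $\E_{K,l}$ with $|\alpha|\ge 3$ or $|\alpha|+|\beta|\ge 3$ vanishes identically at $t=0$, and one reads off $\E_{K,l}(0) = \E_{2,l}(0) = \epsilon_1^2$. The lemma therefore reduces to propagating this smallness over a short interval $[0,t_0]$ with $t_0>0$ independent of $T$.

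The plan is to redo the energy argument of Theorem \ref{thm41} verbatim, now with $\psi(t) = t^N$ in place of $\psi\equiv 1$. Because the Section \ref{sec4} proof was deliberately written carrying a general $\psi$, every cancellation, integration by parts and collision-operator estimate carries over unchanged. The only new contribution is the $\partial_t\psi$ remainder explicit in \eqref{37} and \eqref{99c}: for each pair of multi-indices with $|\alpha|+|\beta|\ge 3$ one picks up the term
\begin{align*}
\mathcal{B}_{\alpha,\beta}(t) := -\Re\bigl(\partial_t(\psi_{|\alpha|+|\beta|-2})\partial^\alpha_\beta f,\,\psi_{|\alpha|+|\beta|-2}\,e^{\pm\phi} w^{2(l-|\alpha|-|\beta|)}\partial^\alpha_\beta f\bigr)_{L^2_{v,x}},
\end{align*}
and collecting everything as in Theorem \ref{thm41} yields
\begin{align*}
\partial_t\E_{K,l}(t) + \lambda\,\D_{K,l}(t) \le C\|\partial_t\phi\|_{L^\infty_x}\E_{K,l}(t) + \sum_{\substack{|\alpha|+|\beta|\le K\\|\alpha|+|\beta|\ge 3}}|\mathcal{B}_{\alpha,\beta}(t)| + C\E_{K,l}^{1/2}\,\D_{K,l}.
\end{align*}

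The crux is the handling of $\mathcal{B}_{\alpha,\beta}$. Setting $k=|\alpha|+|\beta|-2\ge 1$, a direct computation gives $|\partial_t\psi_k|\,\psi_k = Nk\,t^{2Nk-1}$, so $|\mathcal{B}_{\alpha,\beta}(t)|\lesssim Nk\,t^{2Nk-1}\|w^{l-|\alpha|-|\beta|}\partial^\alpha_\beta f(t)\|_{L^2_{v,x}}^2$. The corresponding dissipation piece scales like $t^{2Nk}\|(\tilde{a}^{1/2})^w w^{l-|\alpha|-|\beta|}\partial^\alpha_\beta f\|^2$, so pointwise absorption of $\mathcal{B}_{\alpha,\beta}$ into $\D_{K,l}$ would require an unaffordable factor $t^{-1}$. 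This is the main technical obstacle, flagged in the introduction. To resolve it I would invoke the pseudo-differential calculus of Section 2: since $\gamma+2s>0$ strictly, the operator $(\tilde{a}^{1/2})^w$ supplies a fractional regularization of order $s$ in $v$, while the symbol $\tilde{b}\ge 1$ from \eqref{11} contributes a little extra regularity in both $v$ and $x$. Interpolating between $\|\tilde{b}^w\cdot\|_{L^2}$ and $\|(\tilde{a}^{1/2})^w\cdot\|_{L^2}$, combined with Young's inequality and the fact that $N$ is large, trades the $t^{-1}$ blow-up against a fractional loss of regularity recovered from the dissipation, producing the time-integrated bound
\begin{align*}
\int_0^t|\mathcal{B}_{\alpha,\beta}(\tau)|\,d\tau \le \tfrac{\lambda}{2}\int_0^t\D_{K,l}(\tau)\,d\tau + Ct^\alpha\sup_{0\le\tau\le t}\E_{K,l}(\tau)
\end{align*}
for some $\alpha>0$.

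Integrating the energy inequality on $[0,t]$, invoking the a priori assumption \eqref{priori1}, and using \eqref{34} to dominate $\|\partial_\tau\phi\|_{L^\infty_x}$ by $(\E^h_{K,l})^{1/2}\lesssim\delta_0^{1/2}$, I arrive at
\begin{align*}
\E_{K,l}(t) + \tfrac{\lambda}{2}\int_0^t\D_{K,l}(\tau)\,d\tau \le \epsilon_1^2 + C(\delta_0^{1/2}+t^\alpha)\sup_{0\le\tau\le t}\E_{K,l}(\tau).
\end{align*}
Choosing $t_0\in(0,1]$ small enough that $C(\delta_0^{1/2}+t_0^\alpha)\le 1/2$ lets me absorb the last term, and a standard continuity-in-time argument concludes that $\E_{K,l}(t)\lesssim \epsilon_1^2$ on $[0,t_0]$, with $t_0$ independent of $T$. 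The whole difficulty is concentrated in the third paragraph: the pseudo-differential interpolation needed to neutralize the $t^{-1}$ singularity introduced by $\partial_t\psi$ is precisely what makes it necessary to assume $\gamma+2s>0$ strictly and to engineer the auxiliary symbol $\tilde{b}$.
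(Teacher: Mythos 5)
Your opening observation is correct and matches the paper: with $\psi=t^N$, $\psi_k(0)=0$ for $k\ge 1$, so $\E_{K,l}(0)=\E_{2,l}(0)=\epsilon_1^2$, and the issue is short-time propagation. You also correctly isolate the new contribution $\mathcal{B}_{\alpha,\beta}$ from $\partial_t(\psi_{|\alpha|+|\beta|-2})$ and correctly diagnose the $t^{-1}$ mismatch between $|\partial_t\psi_k|\,\psi_k\sim t^{2Nk-1}$ and the dissipation scaling $t^{2Nk}$. So far, this tracks the paper's Step 1 faithfully.

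However, the third paragraph is where your argument has a genuine gap, and the description you give of the resolution is not correct. You claim to ``interpolate between $\|(\tilde{b}^{1/2})^w\cdot\|_{L^2}$ and $\|(\tilde{a}^{1/2})^w\cdot\|_{L^2}$.'' There is no interpolation inequality between these two: $\tilde{b}(v,y)$ involves the Fourier dual $y$ of $x$, while $\tilde{a}(v,\eta)$ involves only $v$ and the Fourier dual $\eta$ of $v$. The dissipation $\D_{K,l}$ gives no $x$-regularity whatsoever, so you cannot absorb $(\tilde{b}^{1/2})^w$ by interpolating with $(\tilde{a}^{1/2})^w$. What the paper actually does is a hypoelliptic/averaging-lemma type argument (Step 3): it writes $\tilde{b}=\{\theta,v\cdot y\}+R$ with $\theta\in S(1)$ and $R\in S(\tilde{a})$, so that
\begin{align*}
\|(\tilde{b}^{1/2})^w g\|^2_{L^2_{v,x}}\lesssim \Re\big(v\cdot\nabla_x g,(\theta^w\widehat{g})^\vee\big)_{L^2_{v,x}}+\|(\tilde{a}^{1/2})^w g\|^2_{L^2_{v,x}},
\end{align*}
then uses the equation \eqref{7} to replace $v\cdot\nabla_x(\partial^\alpha_\beta f_\pm e^{\pm\phi/2})$ by the other terms in the system and estimates the resulting eight pieces $M_1,\ldots,M_8$ one by one. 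The $x$-regularity needed to absorb $\tilde{b}$ is thus manufactured by trading a $v$-derivative against an $x$-derivative through the transport term $v\cdot\nabla_x$, which is exactly the step your argument does not supply. Without it, your sketch does not close.

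A second inaccuracy concerns the form of the final estimate. You posit a time-integrated bound of the shape $\int_0^t|\mathcal{B}_{\alpha,\beta}|\le\frac{\lambda}{2}\int_0^t\D_{K,l}+Ct^\alpha\sup\E_{K,l}$. In the paper, the term produced in Step 3 after invoking the equation contains a full time derivative of the interactive quantity $(\psi_{2|\alpha|+2|\beta|-4}w^{l-|\alpha|-|\beta|}\partial^\alpha_\beta f_\pm e^{\pm\phi/2},(\theta^w\cdots)^\vee)_{L^2_{v,x}}$; this is absorbed into a modified energy functional rather than appearing as a $t^\alpha$ factor, and the smallness in $t$ then comes from integrating the closed differential inequality $\E_{K,l}(t)\lesssim\E_{K,l}(0)+t(\E_{K,l}(t)+\epsilon_1^2)$ and taking $t$ small. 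The end is similar in spirit, but the mechanism you described for producing the integrable remainder is not the one that works.
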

\begin{proof}
		Assume $|\alpha|+|\beta|\le K$ in this proof. 
	The reason of choosing such $\psi_{|\alpha|+|\beta|-2}$ is that whenever $l\ge K\ge 2$, the initial value $\E_{K,l}(0)=\E_{2,l}(0)$, since $\psi_{|\alpha|+|\beta|-2}|_{t=0}=0$ whenever $|\alpha|+|\beta|\ge 3$. 

Step 1. We claim that when $\psi$ is defined by \eqref{93}, 
\begin{align}
	\partial_t\E_{K,l}(t)+\lambda\D_{K,l}(t) \lesssim \|\partial_t\phi\|_{L^\infty_x}\E_{K,l}(t)+\E_{K,l} + \sum_{|\alpha|+|\beta|\le K}\|\psi_{|\alpha|+|\beta|-2-\frac{1}{2N}}w^{l-|\alpha|-|\beta|}\partial^\alpha_\beta f\|^2_{L^2_{v,x}}.
\end{align}
We will apply the calculation from \eqref{35} to \eqref{74}. Notice that the only difference is that $\psi=t^{N}$ and the term $\partial_t(\psi_{|\alpha|+|\beta|-2})$ is not zero any more, which is in the third term of \eqref{37}, \eqref{99c}. They are bounded above by 
	\begin{align}\label{98a}
		&\notag\sum_{\pm}\sum_{|\alpha|+|\beta|\le K}\big|(\partial_t(\psi_{|\alpha|+|\beta|-2})\partial^{\alpha}_\beta (\II-\PP)f,\psi_{|\alpha|+|\beta|-2}\notag w^{2l-2|\alpha|-2|\beta|}e^{\pm\phi}\partial^{\alpha}_\beta (\II-\PP)f)_{L^2_{v,x}}\big|\\&\qquad
		+\sum_{\pm}\sum_{|\alpha|\le K}\big|(\partial_t(\psi_{|\alpha|-2})\partial^{\alpha} f_\pm,\psi_{|\alpha|-2} e^{\pm\phi}w^{2l-2|\alpha|}\partial^{\alpha} f_\pm)_{L^2_{v,x}}\big|\\
		&\lesssim \sum_{|\alpha|+|\beta|\le K}\|\psi_{|\alpha|+|\beta|-2-\frac{1}{2N}}w^{l-|\alpha|-|\beta|}\partial^\alpha_\beta f\|^2_{L^2_{v,x}}.\notag
	\end{align}
Together with \eqref{47}, we have 
\begin{align}\label{111}
	&\notag\quad\,\frac{1}{2}\partial_t\sum_{\pm}\sum_{|\alpha|\le K}\Big(\|\psi_{|\alpha|-2}e^{\frac{\pm\phi}{2}}\partial^{\alpha} f_\pm\|_{L^2_{v,x}} +
	\|\psi_{|\alpha|-2}\partial^{\alpha}\nabla_x\phi\|_{L^2_x}^2\Big)\\&\qquad + \lambda \sum_{\pm}\sum_{|\alpha|\le K}\|\psi_{|\alpha|-2}e^{\frac{\pm\phi}{2}}(\tilde{a}^{1/2})^w\partial^{\alpha} f_\pm\|_{L^2_{v,x}}^2\\
	&\lesssim \|\partial_t\phi\|_{L^\infty}\E_{K,l}(t)+\E^{1/2}_{K,l}(t)\D_{K,l}(t)+\sum_{|\alpha|+|\beta|\le K}\|\psi_{|\alpha|+|\beta|-2-\frac{1}{2N}}w^{l-|\alpha|-|\beta|}\partial^\alpha_\beta f\|^2_{L^2_{v,x}}.\notag
\end{align}
Now it suffices to compute the energy estimate with weight and mixed derivatives. The idea is similar to Step 3 in Theorem \ref{thm41}. For any $|\alpha|+|\beta|\le K$, we apply $\partial^\alpha_\beta$ to equation \eqref{7} and take the inner product with $\psi_{2|\alpha|+2|\beta|-4}e^{\pm\phi}w^{2l-2|\alpha|-2|\beta|}\partial^\alpha_\beta f_\pm$. Then 
\begin{align*}
		&\quad\,\Big(\partial_t\partial^{\alpha}_\beta  f,e^{\pm\phi}\psi_{2|\alpha|+2|\beta|-4}w^{2l-2|\alpha|-2|\beta|}\partial^{\alpha}_\beta  f\Big)_{L^2_{v,x}}\\
&\qquad + \Big(\sum_{\beta_1\le \beta}C^{\beta_1}_{\beta}\partial_{\beta_1}v_i\partial^{e_i+\alpha}_{\beta-\beta_1} f,e^{\pm\phi}\psi_{2|\alpha|+2|\beta|-4}w^{2l-2|\alpha|-2|\beta|}\partial^{\alpha}_\beta  f\Big)_{L^2_{v,x}} \\
&\qquad\pm \Big(\frac{1}{2}\sum_{\substack{\alpha_1\le\alpha\\\beta_1\le\beta}}\partial^{e_i+\alpha_1}\phi \partial_{\beta_1}v_i\partial^{\alpha-\alpha_1}_{\beta-\beta_1} f,e^{\pm\phi}\psi_{2|\alpha|+2|\beta|-4}w^{2l-2|\alpha|-2|\beta|}\partial^{\alpha}_\beta  f\Big)_{L^2_{v,x}} \\ &\qquad\mp\Big(\sum_{\substack{\alpha_1\le\alpha}}\partial^{e_i+\alpha_1}\phi\partial^{\alpha-\alpha_1}_{\beta+e_i} f,e^{\pm\phi}\psi_{2|\alpha|+2|\beta|-4}w^{2l-2|\alpha|-2|\beta|}\partial^{\alpha}_\beta  f\Big)_{L^2_{v,x}}\\
&\qquad \pm \Big(\partial^{e_i+\alpha}\phi \partial_\beta(v_i\mu^{1/2}),e^{\pm\phi}\psi_{2|\alpha|+2|\beta|-4}w^{2l-2|\alpha|-2|\beta|}\partial^{\alpha}_\beta  f\Big)_{L^2_{v,x}}\\
&\qquad - \Big(\partial^{\alpha}_\beta L_\pm f,e^{\pm\phi}\psi_{2|\alpha|+2|\beta|-4}w^{2l-2|\alpha|-2|\beta|}\partial^{\alpha}_\beta  f\Big)_{L^2_{v,x}} \\
&= \Big(\partial^{\alpha}_\beta \Gamma_{\pm}(f,f),e^{\pm\phi}\psi_{2|\alpha|+2|\beta|-4}w^{2l-2|\alpha|-2|\beta|}\partial^{\alpha}_\beta  f\Big)_{L^2_{v,x}}.
\end{align*}
We denote these terms by $N_1$ to $N_7$. Similar to \eqref{37} and \eqref{46}, using \eqref{98a}, we have 
\begin{align*}
	N_1 \ge  \frac{1}{2}\partial_t\|e^{\frac{\pm\phi}{2}}\psi_{|\alpha|+|\beta|-2}\partial^\alpha_\beta f_\pm\|^2_{L^2_{v,x}} - C\|\partial_t\phi\|_{L^\infty_x}\E_{K,l}(t) - C\sum_{|\alpha|+|\beta|\le K}\|\psi_{|\alpha|+|\beta|-2-\frac{1}{2N}}w^{l-|\alpha|-|\beta|}\partial^\alpha_\beta f\|^2_{L^2_{v,x}}.
\end{align*}
The term $N_2$ is canceled by $N_3$ with $\alpha_1=0$ in $N_3$. For the left terms in $N_3$, $\alpha_1\neq 0$ and hence $|\alpha-\alpha_1|+1\le |\alpha|$. Using the same argument in \eqref{40}, the left terms in $N_3$ are bounded above by $\E^{1/2}_{K,l}(t)\D_{K,l}(t)$. 
For $N_4$, as \eqref{50} and \eqref{42}, by taking integration by parts and noticing the total order of derivatives, we have $|N_4|\lesssim  \E^{1/2}_{K,l}(t)\D_{K,l}(t)$. For $N_5$, there's exponential decay in $v$ and hence, 
\begin{align}\label{105a}
	|N_5|\notag&\lesssim \|\psi_{|\alpha|-2}\partial^{e_i+\alpha}\phi\|_{L^2_x} \|\psi_{|\alpha|+|\beta|-2}w^{l-|\alpha|-|\beta|}\partial^{\alpha}_\beta  f\|_{L^2_{v,x}}\\
	&\lesssim \eta\|\psi_{|\alpha|+|\beta|-2}(\tilde{a}^{1/2})^ww^{l-|\alpha|-|\beta|}\partial^{\alpha}_\beta  f\|_{L^2_{v,x}}^2+C_\eta\|\psi_{|\alpha|-2}\partial^{\alpha}\nabla_x\phi\|_{L^2_x}^2.
\end{align}The second term on the right hand of \eqref{105a} is bounded above by $\E_{K,l}$. 
For $N_6$, using Lemma \ref{lemmaL} and $\psi\le 1$, we have 
\begin{align*}
	&\quad\,-\sum_\pm\big(\partial^{\alpha}_\beta L_\pm f,e^{\pm\phi}\psi_{2|\alpha|+2|\beta|-4}w^{2l-2|\alpha|-2|\beta|}\partial^{\alpha}_\beta  f\big)_{L^2_{v,x}}\\
	&\gtrsim \|\psi_{|\alpha|+|\beta|-2}(\tilde{a}^{1/2})^ww^{l-|\alpha|-|\beta|}\partial^\alpha_\beta f\|^2_{L^2_v} - \eta\sum_{|\beta_1|\le|\beta|}\|\psi_{|\alpha|+|\beta_1|-2}(\tilde{a}^{1/2})^ww^{l-|\alpha|-|\beta_1|}\partial^\alpha_{\beta_1}f_\pm\|^2_{L^2_v}\\
	&\qquad\qquad\qquad\qquad\qquad\qquad\qquad\qquad-C_\eta\|\psi_{|\alpha|-2}\partial^\alpha f\|^2_{L^2(B_{C_\eta})}
\end{align*}
By Lemma \ref{lemmag}, $N_7$ is bounded above by $\E^{1/2}_{K,l}\D_{K,l}(t)+\E_{K,l}(t)\D^{1/2}_{K,l}(t)\lesssim \E_{K,l}(t)+(\E^{1/2}_{K,l}+\E_{K,l}(t))\D_{K,l}(t)$. Combining the above estimate, taking summation on $\pm$, $|\alpha|+|\beta|\le K$ and letting $\eta$ sufficiently small, we have 
\begin{align}\label{111a}
	&\quad\,\frac{1}{2}\partial_t\sum_\pm\sum_{|\alpha|+|\beta|\le K}\|e^{\frac{\pm\phi}{2}}\psi_{|\alpha|+|\beta|-2}\partial^\alpha_\beta f_\pm\|^2_{L^2_{v,x}}+\lambda \sum_\pm\sum_{|\alpha|+|\beta|\le K}\|\psi_{|\alpha|+|\beta|-2}(\tilde{a}^{1/2})^ww^{l-|\alpha|-|\beta|}\partial^\alpha_\beta f_\pm\|^2_{L^2_v} \notag\\&\lesssim \|\partial_t\phi\|_{L^\infty_x}\E_{K,l}(t) + \sum_{|\alpha|+|\beta|\le K}\|\psi_{|\alpha|+|\beta|-2-\frac{1}{2N}}w^{l-|\alpha|-|\beta|}\partial^\alpha_\beta f\|^2_{L^2_{v,x}}+\E_{K,l}+\E_{K,l}(t)\D_{K,l}(t).
\end{align}
Taking combination $\eqref{111}+\eqref{111a}$ and noticing \eqref{78}\eqref{15a}, we have 
\begin{align}
	\partial_t\E_{K,l}(t)+\lambda\D_{K,l}(t) \lesssim \|\partial_t\phi\|_{L^\infty_x}\E_{K,l}(t)+\E_{K,l} + \sum_{|\alpha|+|\beta|\le K}\|\psi_{|\alpha|+|\beta|-2-\frac{1}{2N}}w^{l-|\alpha|-|\beta|}\partial^\alpha_\beta f\|^2_{L^2_{v,x}}.
\end{align}
This proves the claim. So it suffices to control the last term.

Step 2. 
To deal with it, as in \cite{Deng2020b}, we define $\tilde{b}(v,y)$ and constants $\delta_1,\delta_2>0$ as the followings. 
Let $c = \max\{0,-\frac{\gamma}{2}\}$. If $\gamma+3s-c\le 1$, we let 
\begin{align}\label{107}
	\delta_1 = \frac{-s+c}{\gamma+s+c-2},\qquad\delta_2 = \frac{-1}{\gamma+s+c-2}.
\end{align}
If $\gamma+3s-c\ge 1$, we let
\begin{align}\label{108}
	\delta_1 = \frac{s-c}{2s-2c+1},\qquad\delta_2 = \frac{1}{2s-2c+1}.
\end{align}
Then in each case, since $s-c\in(0,1)$, by direct calculation we have the following estimates. 
\begin{align*}
	\delta_1\le 1,\quad\delta_1\le \delta_2,\quad\delta_1-\frac{1}{2}+\frac{\delta_2}{2}\le 0,\quad   \frac{\delta_1}{\delta_2}+c\le s,\quad \frac{\delta_1-1}{\delta_2}\le\gamma+2s-2.
\end{align*} 
Let $\chi_0$ be a smooth cutoff function such that $\chi_0(z)$ equal to $1$ when $|z|<\frac{1}{2}$ and equal to $0$ when $|z|\ge 1$. Define 
\begin{align}\label{98}
	\tilde{b}(v,y) &= (1+|v|^2+|y|^2+|v\wedge y|^2)^{\delta_1},\\
	\chi(v,\eta) &= \chi_0\bigg(\frac{1+|v|^2+|\eta|^2+|v\wedge\eta|^2}{(1+|v|^2+|y|^2+|v\wedge y|^2)^{\delta_2}}\bigg),\notag
\end{align}and
\begin{align*}
	\theta(v,\eta) = (1+|v|^2+|y|^2+|v\wedge y|^2)^{\delta_1-1} (y\cdot\eta+(v\wedge y)\cdot(v\wedge\eta))\chi(v,\eta).
\end{align*}
Using the definition \eqref{98} of $\tilde{b}$, by Young's inequality, if $|\alpha|+|\beta|>2$, we have 
\begin{align}\notag
	\psi_{|\alpha|+|\beta|-2-\frac{1}{2N}}&\lesssim \delta \big((\tilde{b}^{1/2})^{\frac{|\alpha|+|\beta|-2-\frac{1}{2N}}{|\alpha|+|\beta|-2}}\psi_{|\alpha|+|\beta|-2-\frac{1}{2N}}\big)^{\frac{|\alpha|+|\beta|-2}{|\alpha|+|\beta|-2-\frac{1}{2N}}}+C_{0,\delta}\big((\tilde{b}^{-1/2})^{\frac{|\alpha|+|\beta|-2-\frac{1}{2N}}{|\alpha|+|\beta|-2}}\big)^{2N(|\alpha|+|\beta|-2)}\\
	&\lesssim \delta\, \tilde{b}^{1/2}\psi_{|\alpha|+|\beta|-2} + C_{0,\delta}\<v\>^{-N_0}\<y\>^{-N_0},\label{100a}
\end{align}where $C_{0,\delta}$ is a large constant depending on $\delta>0$,
for any $N_0>0$, with $N>>N_0$ sufficiently large. If $|\alpha|+|\beta|\le 2$, we have 
\begin{align*}
	1\lesssim \delta\,\tilde{b}^{1/2} + C_{0,\delta}\<v\>^{-N_0}\<y\>^{-N_0}.
\end{align*} 
When $|\beta|> 2$, noticing $\psi_{|\alpha|+|\beta|-2-\frac{1}{2N}}=\psi_{|\alpha|-\frac{1}{2N}}\psi_{|\beta|-2}$, we use 
\begin{align*}
	\psi_{|\alpha|+|\beta|-2-\frac{1}{2N}}\lesssim\delta\, \tilde{b}^{1/2}\psi_{|\alpha|+|\beta|-2} +  C_{0,\delta}\psi_{|\beta|-2}\<v\>^{-N_0}\<y\>^{-N_0}.
\end{align*}
Then by using Lemma \ref{inverse_bounded_lemma} or more precisely by \cite[Lemma 2.4]{Deng2020a}, we have 
\begin{align}\label{101}
	&\quad\,\|\psi_{|\alpha|+|\beta|-2-\frac{1}{2N}}w^{l-|\alpha|-|\beta|}\partial^\alpha_\beta f\|^2_{L^2_{v,x}}\lesssim \delta^2\|\psi_{|\alpha|+|\beta|-2}(\tilde{b}^{1/2})^ww^{l-|\alpha|-|\beta|}\partial^\alpha_\beta f\|^2_{L^2_{v,x}} + C^2_{0,\delta} \|\psi_{|\beta|-2}\partial_\beta f\|^2_{L^2_{v,x}}.
\end{align}
In a similar way, by noticing 
\begin{align*}
	1\lesssim \frac{\delta}{C_{0,\delta}}\tilde{a}^{1/2} + C_\delta \<v\>^{-N_0}\<\eta\>^{-N_0},
\end{align*}where $C_{0,\delta}$ comes from \eqref{100a}, we have 
\begin{align*}
	\|\psi_{|\beta|-2}\partial_\beta f\|^2_{L^2_{v,x}}\lesssim \frac{\delta^2}{C^2_{0,\delta}}\|\psi_{|\beta|-2}(\tilde{a}^{1/2})^w\partial_\beta f\|^2_{L^2_{v,x}}+C_\delta\|f\|^2_{L^2_{v,x}}\lesssim \frac{\delta^2}{C^2_{0,\delta}}\D_{K,l}+C_\delta\E_{0,0}. \end{align*}
Plugging this into \eqref{101}, we have 
\begin{align}\label{101a}
	\|\psi_{|\alpha|+|\beta|-2-\frac{1}{2N}}w^{l-|\alpha|-|\beta|}\partial^\alpha_\beta f\|^2_{L^2_{v,x}}\lesssim \delta^2\|\psi_{|\alpha|+|\beta|-2}(\tilde{b}^{1/2})^ww^{l-|\alpha|-|\beta|}\partial^\alpha_\beta f\|^2_{L^2_{v,x}}+\delta^2\D_{K,l}+C_\delta\E_{0,0}.
\end{align}
Now it suffices to eliminate the first term on the right hand. 

Step 3. 
By the calculation in \cite[Theorem 3.3]{Deng2020b}, we have 
\begin{equation*}
	\theta\in S(1),\ \text{ and }\ \tilde{b}(v,y) = \{\theta,v\cdot y\} + R,
\end{equation*}where $R\in S(\tilde{a})$. Thus, $\theta^w(v,y,D_v,D_y)$ is a bounded operator on $L^2_{v,y}$. Noticing $\tilde{b}$ is a symbol only on $v$, $y$ and $\theta$ is a symbol on $y,v,\eta$ and the second derivative of $v\cdot y$ with respect to $v$ is zero, we have by \eqref{compostion} that 
\begin{align}\label{102}\notag
	\|(\tilde{b}^{1/2})^w(v,x,D_v,D_x)g\|^2_{L^2_{v,x}} &=\big(b(v,y)\widehat{g},\widehat{g}\big)_{L^2_{v,y}} \\\notag
	&= \Re\big(\{\theta,v\cdot y\}^w(v,y,D_v,D_y)\widehat{g},\widehat{g}\big)_{L^2_{v,y}} + \Re(R^w\widehat{g},\widehat{g})_{L^2_{v,y}}\\\notag
	&\lesssim\Re\big(\mathbf{i}v\cdot y\widehat{g},\theta^w\widehat{g}\big)_{L^2_{v,y}}+\|(\tilde{a}^{1/2})^wg\|^2_{L^2_{v,x}}\\
	&\lesssim\Re\big(v\cdot \nabla_x{g},(\theta^w\widehat{g})^\vee\big)_{L^2_{v,x}}+\|(\tilde{a}^{1/2})^wg\|^2_{L^2_{v,x}},
\end{align}
for any $g$ in a suitable smooth space. Here and after, we write $(\tilde{b}^{1/2})^w=(\tilde{b}^{1/2})^w(v,x,D_v,D_x)$ and $\theta^w=\theta^w(v,y,D_v,D_y)$.
Now we let $g = \psi_{|\alpha|+|\beta|-2}w^{l-|\alpha|-|\beta|}\partial^\alpha_\beta f_\pm e^{\frac{\pm\phi}{2}}$ in \eqref{102}, then 
\begin{align}\label{104}
	&\notag\quad\,\|(\tilde{b}^{1/2})^w\psi_{|\alpha|+|\beta|-2}w^{l-|\alpha|-|\beta|}\partial^\alpha_\beta f_\pm e^{\frac{\pm\phi}{2}}\|_{L^2_{v,x}}\\
	&\lesssim\big(v\cdot \nabla_x{\psi_{|\alpha|+|\beta|-2}w^{l-|\alpha|-|\beta|}\partial^\alpha_\beta f}e^{\frac{\pm\phi}{2}},(\theta^w\psi_{|\alpha|+|\beta|-2}w^{l-|\alpha|-|\beta|}{(\partial^\alpha_\beta f_\pm e^{\frac{\pm\phi}{2}})^\wedge})^\vee\big)_{L^2_{v,x}}+\D_{K,l}.
\end{align}
We denote the first term on the right hand by $M$ for brevity. Then, by equation \eqref{7}, 
\begin{align*}
	&\quad\,v\cdot\nabla_x(\partial^\alpha_\beta f_\pm e^{\frac{\pm\phi}{2}})\\
	&= v_i\partial^{\alpha+e_i}_\beta f_\pm e^{\frac{\pm\phi}{2}} \pm \frac{1}{2}v_i\partial^{e_i}\phi e^{\frac{\pm\phi}{2}}\partial^\alpha_\beta f_\pm\\
	&= \partial_\beta\big(v_i\partial^{\alpha+e_i}f_\pm e^{\frac{\pm\phi}{2}}\big)-\sum_{0\neq \beta_1\le \beta}\partial_{\beta_1}v_i\partial^{\alpha+e_i}_{\beta-\beta_1}f_\pm e^{\frac{\pm\phi}{2}}\pm \frac{1}{2}v_i\partial^{e_i}\phi e^{\frac{\pm\phi}{2}}\partial^\alpha_\beta f_\pm\\
	&=-\partial_t\partial^\alpha_\beta f_\pm e^{\frac{\pm\phi}{2}} \pm \sum_{\alpha_1\le\alpha}C^{\alpha_1}_\alpha\partial^{e_i+\alpha_1}\phi\partial^{\alpha-\alpha_1}_{\beta+e_i}f_\pm e^{\frac{\pm\phi}{2}} \mp \frac{1}{2}\sum_{\alpha_1\le\alpha}\sum_{\beta_1\le\beta}\partial^{e_i+\alpha_1}\phi\partial_{\beta_1}v_i\partial^{\alpha-\alpha_1}_{\beta-\beta_1}f_\pm e^{\frac{\pm\phi}{2}}\\
	&\quad\mp \partial^{e_i+\alpha}\phi\partial_\beta(v_i\mu^{1/2})e^{\frac{\pm\phi}{2}}+\partial^\alpha_\beta L_\pm fe^{\frac{\pm\phi}{2}}+\partial^\alpha_\beta\Gamma_\pm(f,f)e^{\frac{\pm\phi}{2}}  -\sum_{0\neq \beta_1\le \beta}\partial_{\beta_1}v_i\partial^{\alpha+e_i}_{\beta-\beta_1}f_\pm e^{\frac{\pm\phi}{2}}\pm \frac{1}{2}v_i\partial^{e_i}\phi e^{\frac{\pm\phi}{2}}\partial^\alpha_\beta f_\pm
\end{align*}
Thus, 
\begin{align*}
 M &=
 \Re\big(-\psi_{2|\alpha|+2|\beta|-4}w^{l-|\alpha|-|\beta|}\partial_t\partial^\alpha_\beta f_\pm e^{\frac{\pm\phi}{2}},(\theta^ww^{l-|\alpha|-|\beta|}{(\partial^\alpha_\beta f_\pm e^{\frac{\pm\phi}{2}})^\wedge})^\vee\big)_{L^2_{v,x}} \\
&\quad \pm \Re\big(\psi_{2|\alpha|+2|\beta|-4}w^{l-|\alpha|-|\beta|}\sum_{\alpha_1\le\alpha}C^{\alpha_1}_\alpha\partial^{e_i+\alpha_1}\phi\partial^{\alpha-\alpha_1}_{\beta+e_i}f_\pm e^{\frac{\pm\phi}{2}},(\theta^ww^{l-|\alpha|-|\beta|}{(\partial^\alpha_\beta f_\pm e^{\frac{\pm\phi}{2}})^\wedge})^\vee\big)_{L^2_{v,x}} \\
&\quad\mp \Re\big(\psi_{2|\alpha|+2|\beta|-4}w^{l-|\alpha|-|\beta|}\frac{1}{2}\sum_{\alpha_1\le\alpha}\sum_{\beta_1\le\beta}\partial^{e_i+\alpha_1}\phi\partial_{\beta_1}v_i\partial^{\alpha-\alpha_1}_{\beta-\beta_1}f_\pm e^{\frac{\pm\phi}{2}},(\theta^ww^{l-|\alpha|-|\beta|}{(\partial^\alpha_\beta f_\pm e^{\frac{\pm\phi}{2}})^\wedge})^\vee\big)_{L^2_{v,x}} \\
 &\quad\mp \Re\big(\psi_{2|\alpha|+2|\beta|-4}w^{l-|\alpha|-|\beta|}\partial^{e_i+\alpha}\phi\partial_\beta(v_i\mu^{1/2})e^{\frac{\pm\phi}{2}},(\theta^ww^{l-|\alpha|-|\beta|}{(\partial^\alpha_\beta f_\pm e^{\frac{\pm\phi}{2}})^\wedge})^\vee\big)_{L^2_{v,x}}
 \\&\quad+\Re\big(\psi_{2|\alpha|+2|\beta|-4}w^{l-|\alpha|-|\beta|}\partial^\alpha_\beta L_\pm f e^{\frac{\pm\phi}{2}}, (\theta^ww^{l-|\alpha|-|\beta|}{(\partial^\alpha_\beta f_\pm e^{\frac{\pm\phi}{2}})^\wedge})^\vee\big)_{L^2_{v,x}}  \\ &\quad+\Re\big(\psi_{2|\alpha|+2|\beta|-4}w^{l-|\alpha|-|\beta|}\partial^\alpha_\beta\Gamma_\pm(f,f)e^{\frac{\pm\phi}{2}},(\theta^ww^{l-|\alpha|-|\beta|}{(\partial^\alpha_\beta f_\pm e^{\frac{\pm\phi}{2}})^\wedge})^\vee\big)_{L^2_{v,x}}  \\
 &\quad-\Re\big(\psi_{2|\alpha|+2|\beta|-4}w^{l-|\alpha|-|\beta|}\sum_{0\neq \beta_1\le \beta}\partial_{\beta_1}v_i\partial^{\alpha+e_i}_{\beta-\beta_1}f_\pm e^{\frac{\pm\phi}{2}},(\theta^ww^{l-|\alpha|-|\beta|}{(\partial^\alpha_\beta f_\pm e^{\frac{\pm\phi}{2}})^\wedge})^\vee\big)_{L^2_{v,x}}\\
 &\quad\pm \Re\big(\psi_{2|\alpha|+2|\beta|-4}w^{l-|\alpha|-|\beta|}\frac{1}{2}v_i\partial^{e_i}\phi e^{\frac{\pm\phi}{2}}\partial^\alpha_\beta f_\pm,(\theta^ww^{l-|\alpha|-|\beta|}{(\partial^\alpha_\beta f_\pm e^{\frac{\pm\phi}{2}})^\wedge})^\vee\big)_{L^2_{v,x}}.
\end{align*}
Denote these terms by $M_1$ to $M_8$. Notice that there's coefficient $\delta$ in \eqref{101a}, we only need to have a upper bound for these terms. 
For $M_1$, noticing that $\theta^w$ is self-adjoint, 
\begin{align*}
	&\quad\,\Re\big(-\psi_{2|\alpha|+2|\beta|-4}w^{l-|\alpha|-|\beta|}\partial_t\partial^\alpha_\beta f_\pm e^{\frac{\pm\phi}{2}},(\theta^ww^{l-|\alpha|-|\beta|}{(\partial^\alpha_\beta f_\pm e^{\frac{\pm\phi}{2}})^\wedge})^\vee\big)_{L^2_{v,x}}\\
	&\le \frac{1}{2}\partial_t\big(-\psi_{2|\alpha|+2|\beta|-4}w^{l-|\alpha|-|\beta|}\partial^\alpha_\beta f_\pm e^{\frac{\pm\phi}{2}},(\theta^ww^{l-|\alpha|-|\beta|}{(\partial^\alpha_\beta f_\pm e^{\frac{\pm\phi}{2}})^\wedge})^\vee\big)_{L^2_{v,x}}\\
	&\qquad+C\big|\big(-\psi_{2|\alpha|+2|\beta|-4-\frac{1}{N}}w^{l-|\alpha|-|\beta|}\partial^\alpha_\beta f_\pm e^{\frac{\pm\phi}{2}},(\theta^ww^{l-|\alpha|-|\beta|}{(\partial^\alpha_\beta f_\pm e^{\frac{\pm\phi}{2}})^\wedge})^\vee\big)_{L^2_{v,x}}\big|\\
	&\qquad+C\big|\big(\partial_t\phi\psi_{2|\alpha|+2|\beta|-4}w^{l-|\alpha|-|\beta|}\partial^\alpha_\beta f_\pm e^{\frac{\pm\phi}{2}},(\theta^ww^{l-|\alpha|-|\beta|}{(\partial^\alpha_\beta f_\pm e^{\frac{\pm\phi}{2}})^\wedge})^\vee\big)_{L^2_{v,x}}\big|.
\end{align*}
We denote the second and third term on the right hand side by $M_{1,1}$ and $M_{1,2}$. Then noticing $\theta^w$ is a bounded operator on $L^2_{v,y}$ and using the trick from \eqref{100a} to \eqref{101a}, we have 
\begin{align*}
	M_{1,1}\lesssim \delta\|\psi_{|\alpha|+|\beta|-2}(\tilde{b}^{1/2})^ww^{l-|\alpha|-|\beta|}\partial^\alpha_\beta f\|_{L^2_{v,x}}^2+\delta\D_{K,l}+C_\delta\E_{0,0}. 
\end{align*}
The boundedness of $\theta^w$ will be frequently used in the following without further mentioned. The term $M_{1,2}$ is similar to the case $I_1$, i.e.
\begin{align*}
	M_{1,2} \lesssim \|\partial_t\phi\|_{L^\infty_x}\|\psi_{|\alpha|+|\beta|-2}w^{l-|\alpha|-|\beta|}\partial^\alpha_\beta f\|_{L^2_{v,x}}^2\lesssim \|\partial_t\phi\|_{L^\infty_x}\E_{K,l}(t). 
\end{align*}
For $M_2$, when $\alpha_1=0$, noticing $\theta^w$ is self-adjoint, we use integration by parts over $v$ to obtain 
\begin{align*}
|M_2| &= \big|\big(\psi_{2|\alpha|+2|\beta|-4}w^{l-|\alpha|-|\beta|}\partial^{e_i}\phi\partial^{\alpha}_{\beta+e_i}f_\pm e^{\frac{\pm\phi}{2}},(\theta^ww^{l-|\alpha|-|\beta|}{(\partial^\alpha_\beta f_\pm e^{\frac{\pm\phi}{2}})^\wedge})^\vee\big)_{L^2_{v,x}}\big|\\
&\lesssim \big|\big(\psi_{2|\alpha|+2|\beta|-4}\partial_{e_i}(w^{l-|\alpha|-|\beta|})\partial^{e_i}\phi\partial^{\alpha}_{\beta}f_\pm e^{\frac{\pm\phi}{2}},(\theta^ww^{l-|\alpha|-|\beta|}{(\partial^\alpha_\beta f_\pm e^{\frac{\pm\phi}{2}})^\wedge})^\vee\big)_{L^2_{v,x}}\big|\\
&\quad+\big|\big(\psi_{2|\alpha|+2|\beta|-4}w^{l-|\alpha|-|\beta|}\partial^{e_i}\phi\partial^{\alpha}_{\beta}f_\pm e^{\frac{\pm\phi}{2}},(\underbrace{[\partial_{e_i},\theta^w]}_{\in S(1)}w^{l-|\alpha|-|\beta|}{(\partial^\alpha_\beta f_\pm e^{\frac{\pm\phi}{2}})^\wedge})^\vee\big)_{L^2_{v,x}}\big|\\
&\quad+\big|\big(\psi_{2|\alpha|+2|\beta|-4}w^{l-|\alpha|-|\beta|}\partial^{e_i}\phi\partial^{\alpha}_{\beta}f_\pm e^{\frac{\pm\phi}{2}},(\theta^w\partial_{e_i}(w^{l-|\alpha|-|\beta|}){(\partial^\alpha_\beta f_\pm e^{\frac{\pm\phi}{2}})^\wedge})^\vee\big)_{L^2_{v,x}}\big|\\
&\lesssim \|\partial^{e_i}\phi\|_{H^2_x}\|\psi_{|\alpha|+|\beta|-2}w^{l-|\alpha|-|\beta|}\partial^{\alpha}_{\beta}f_\pm\|_{L^2_{v,x}}^2\\
&\lesssim \delta_0\E_{K,l}(t),
\end{align*}by \eqref{13}. 
When $\alpha_1\neq 0$, then $\alpha\neq 0$, the total number of derivatives on the first $f_\pm$ is less or equal to $K$ and there's at least one derivative on the second $f_\pm$ with respect to $x$. Thus, 
\begin{align*}
	|M_2|\lesssim \E^{1/2}_{K,l}\D_{K,l}, 
\end{align*}where \eqref{13} is applied. 
For the term $M_3$ with $\alpha_1=\beta_1=0$, a nice observation is that it's the same as $M_8$ except the sign and hence, they are eliminated. When $\alpha_1+\beta_1\neq 0$, the derivative order for the first $f_\pm$ is less or equal to $K-1$ and hence, the term $\partial_{\beta_1}v_i$ can be controlled as $w^{l-|\alpha|-|\beta|}\partial_{\beta_1}v_i\lesssim w^{l-|\alpha-\alpha_1|-|\beta-\beta_1|}$. Then $M_3$ is bounded above by 
\begin{align*}
	\big|\big(\psi_{2|\alpha|+2|\beta|-4}&w^{l-|\alpha|-|\beta|}\frac{1}{2}\sum_{\substack{\alpha_1\le\alpha\\\beta_1\le\beta\\|\alpha_1|+|\beta_1|\neq 0}}\partial^{e_i+\alpha_1}\phi\partial_{\beta_1}v_i\partial^{\alpha-\alpha_1}_{\beta-\beta_1}f_\pm e^{\frac{\pm\phi}{2}},(\theta^ww^{l-|\alpha|-|\beta|}{(\partial^\alpha_\beta f_\pm e^{\frac{\pm\phi}{2}})^\wedge})^\vee\big)_{L^2_{v,x}}\big|\\
	&\lesssim \E^{1/2}_{K,l}\D_{K,l}.
\end{align*}
For $M_4$, there's exponential decay in $v$ and hence 
	$|M_4|\lesssim \E_{K,l}. $
For $M_5$, recalling that we only need upper bound, using Lemma \ref{lemmat}, we have $
	|M_5|\lesssim\E_{K,l}.$  For $M_6$, we use Lemma \ref{lemmag} to obtain $|M_6|\lesssim\E^{1/2}_{K,l}\D_{K,l}.$ For $M_7$, since $\beta_1\neq 0$, $|\partial_{\beta_1}v_i|\lesssim 1$ and the total number of derivatives on the first $f_\pm$ is less or equal to $K$. This yields that $|M_7|\lesssim \D_{K,l}$. Combining the above estimate with \eqref{104}, we have 
	\begin{align}\label{112}
		&\notag\quad\,\|(\tilde{b}^{1/2})^w\psi_{|\alpha|+|\beta|-2}w^{l-|\alpha|-|\beta|}\partial^\alpha_\beta f_\pm e^{\frac{\pm\phi}{2}}\|_{L^2_{v,x}}\\
		&\lesssim \delta^2\partial_t\big(-\psi_{2|\alpha|+2|\beta|-4}w^{l-|\alpha|-|\beta|}\partial^\alpha_\beta f_\pm e^{\frac{\pm\phi}{2}},(\theta^ww^{l-|\alpha|-|\beta|}{(\partial^\alpha_\beta f_\pm e^{\frac{\pm\phi}{2}})^\wedge})^\vee\big)_{L^2_{v,x}}\\
		&\notag\qquad+\E^{1/2}_{K,l}\D_{K,l}+\E_{0,0}+\|\partial_t\phi\|_{L^\infty_x}\E_{K,l}(t)+\D_{K,l}+\E_{K,l}.
	\end{align}
Substituting \eqref{112} into \eqref{101a}, then plugging into \eqref{111}, we have that for $0<\delta<1$, 
\begin{align*}
	\partial_t\E_{K,l}(t)+\lambda D_{K,l}(t)&\lesssim\delta^2\sum_{|\alpha|+|\beta|\le K}\partial_t\big(-\psi_{2|\alpha|+2|\beta|-4}w^{l-|\alpha|-|\beta|}\partial^\alpha_\beta f_\pm e^{\frac{\pm\phi}{2}},(\theta^ww^{l-|\alpha|-|\beta|}{(\partial^\alpha_\beta f_\pm e^{\frac{\pm\phi}{2}})^\wedge})^\vee\big)_{L^2_{v,x}}\\&\qquad+ \|\partial_t\phi\|_{L^\infty_x}\E_{K,l}(t) + \delta\big(\E^{1/2}_{K,l}(t)\D_{K,l}(t)+\D_{K,l}(t)+\E_{K,l}(t)\big)+C_\delta\E_{0,0}(t),
\end{align*}where we will use $\|\partial_t\phi\|_{L^\infty_x}\lesssim \E^{1/2}_{K,l}\lesssim \delta^{1/2}_0$ by \eqref{34} and \eqref{priori1}.
By choosing $\delta>0$ sufficiently small, using the $a$ $priori$ assumption \eqref{priori1} and
noticing that $\E_{K,l}\lesssim \D_{K,l}+\|\P f\|^2_{L^2_x}\lesssim\D_{K,l}+\E_{0,0}$, we have 
\begin{align*}
	&\quad\,\partial_t\E_{K,l}(t)+\lambda \E_{K,l}(t)\\&\lesssim\delta^2\sum_{|\alpha|+|\beta|\le K}\partial_t\big(-\psi_{2|\alpha|+2|\beta|-4}w^{l-|\alpha|-|\beta|}\partial^\alpha_\beta f_\pm e^{\frac{\pm\phi}{2}},(\theta^ww^{l-|\alpha|-|\beta|}{(\partial^\alpha_\beta f_\pm e^{\frac{\pm\phi}{2}})^\wedge})^\vee\big)_{L^2_{v,x}}+\E_{K,l}(t).
\end{align*}
By solving this ODE, choosing $\delta>0$ sufficiently small and noticing 
\begin{align*}
	\sum_{|\alpha|+|\beta|\le K}\big|\big(-\psi_{2|\alpha|+2|\beta|-4}w^{l-|\alpha|-|\beta|}\partial^\alpha_\beta f_\pm e^{\frac{\pm\phi}{2}},(\theta^ww^{l-|\alpha|-|\beta|}{(\partial^\alpha_\beta f_\pm e^{\frac{\pm\phi}{2}})^\wedge})^\vee\big)_{L^2_{v,x}}\big|
	\lesssim \E_{K,l}(t),
\end{align*} we have 
\begin{align*}
	\E_{K,l}(t) &\lesssim \E_{K,l}(0)+t (\E_{K,l}(t)+\epsilon_1^2),\\
	\E_{K,l}(t) &\lesssim \epsilon^2_1,
\end{align*}by choose $t<<1$, since $\E_{K,l}(0)\le \E_{2,l}(0)$.

\end{proof}

\begin{proof}
	[Proof of Theorem \ref{main2}]
		It follows immediate from the $a$ $priori$ estimate \eqref{priori1} that $\sup_{0\le t\le t_0}\E_{K,l}\lesssim \epsilon^2_1$ holds true for some $t_0>0$ and the details are the same as obtaining the global solution as in Theorem \ref{main1}; see also \cite{Guo2012,Gressman2011} and \cite{Deng2020b}. The Theorem \ref{main1} has already proved that $\E_{i+1,2}(t)\lesssim \epsilon_0^2C_\tau$ for any $t\ge \tau>0$. Notice that the constant in Lemma \ref{lem51} is independent of time $t$ and hence, we can apply Lemma \ref{lem51} to any time interval with length less than $t_0$ to obtain
		\begin{align}\label{106}
			\sup_{t>0}\E_{K,l}(t)\lesssim \epsilon_1^2. 
		\end{align}
	Recalling \eqref{Defe} and the choice \eqref{93} of $\psi$, we have that for any $\tau,T\in(0,\infty]$, any $l,K\ge 0$,
	\begin{align*}
		\sup_{\tau\le t\le T}\sum_{|\alpha|+|\beta|\le K}\|w^{l-|\alpha|-|\beta|}\partial^\alpha_\beta f\|^2_{L^2_{v,x}}+\sup_{\tau\le t\le T}\sum_{|\alpha|\le K}\|\partial^\alpha\nabla_x\phi\|_{L^2_x}^2\le  C_{\tau}<\infty.
	\end{align*}	
 This proves \eqref{19a}.

If additionally $
\sup_{l_0\ge 2}\E_{2,l_0}(0)<\infty$ is sufficiently small. Then for $l\ge K\ge 3$, by \eqref{19a}, we have 
\begin{align*}
	\sup_{\tau\le t\le T}\sum_{|\alpha|+|\beta|\le K}\|w^{l-|\alpha|-|\beta|}\partial^\alpha_\beta f\|^2_{L^2_{v,x}}\le C_{\tau}.
\end{align*}
For the regularity on $t$, the technique above is not applicable and we only make a rough estimate. For any $t>0$, applying $w^l\partial^k_t\partial^\alpha_\beta$ with $k\ge 0$, $|\alpha|+|\beta|\le K$ to equation \eqref{7} and taking $L^2_{v,x}$ norms, we have   
	\begin{align}\label{105}
		\|w^l\partial^{k+1}_t\partial^\alpha_\beta f_\pm\|^2_{L^2_{v,x}}
		&\notag\lesssim \|w^lv\cdot\nabla_x\partial^k_t\partial^\alpha_\beta f_\pm\|^2_{L^2_{v,x}}+\|w^l\sum_{k_1\le k}\partial^{\alpha}_\beta\big(\partial^{k_1}_t\nabla_x\phi\cdot v\partial^{k-k_1}_t f_\pm\big)\|^2_{L^2_{v,x}}
		\\
		&\qquad+\|w^l\sum_{k_1\le k}\partial^\alpha\big(\partial^{k_1}_t\nabla_x\phi\cdot\nabla_v\partial^{k-k_1}_t\partial_\beta f_\pm\big)\|_{L^2_{v,x}}^2+\|w^l\partial^k_t\partial^\alpha\nabla_x\phi\cdot \partial_\beta(v\mu^{1/2})\|^2_{L^2_{v,x}}\\
		&\qquad+\|w^l\partial^\alpha_\beta L_\pm \partial^k_tf_\pm\|^2_{L^2_{v,x}} + \|w^l\sum_{k_1\le k}\partial^\alpha_\beta\Gamma_\pm(\partial^{k_1}_tf,\partial^{k-k_1}_tf)\|_{L^2_{v,x}}^2. \notag
	\end{align}
Denoting $\E_{K,l,k}=\sum_{|\alpha|+|\beta|\le K,k_1\le k}\|w^l\partial^\alpha_\beta\partial^{k_1}_tf\|_{L^2_{v,x}}$, we estimate the right-hand terms one by one. The first term on the right hand is bounded above by $\E_{K+1,l+1,k}$.
For terms involving both $\phi$ and $f_\pm$, we use \eqref{13} to generate one more $x$ derivative on $\phi$. Applying the trick in \eqref{78}, the second term is bounded above by 
\begin{align*}
	\sum_{|\alpha|+|\beta|\le K+1,\,k_1\le k}\|\partial^{k_1}_t\partial^\alpha_\beta\nabla_x\phi\|^2_{L^2_x}\sum_{|\alpha|+|\beta|\le K+1,\,k_1\le k}\|w^{l+1}\partial^{k_1}_t\partial^\alpha_\beta f_\pm\|^2_{L^2_{v,x}}\lesssim \E_{K+1,l+1,k}^2.
\end{align*}
Similarly, the third term is bounded above by $\E_{K+1,l+1,k}^2.$ For the fourth term, when $k=0$, it's bounded above by $\E_{K,l,0}$. When $k\ge 1$, by using \eqref{34a}, it's bounded above by $\E_{K,l,k-1}$. For the fifth term, noticing $L_\pm\in S(\tilde{a})\subset S(\<v\>^{\gamma+2s}\<\eta\>^{2s})$ and $s\in(0,1)$, we have 
\begin{align*}
\|w^l\partial^\alpha_\beta L_\pm \partial^k_tf_\pm\|^2_{L^2_{v,x}}\lesssim \|w^{l+\gamma+2s}\<D_v\>^2\<(D_x,D_v)\>^K \partial^k_tf_\pm\|^2_{L^2_{v,x}}\lesssim \E_{K+2,l+\gamma+2s,k}.
\end{align*}
For the last term, using \eqref{12a}, it's bounded above by 
\begin{align*}
	\sum_{|\alpha|+|\beta|\le K,\,k_1\le k}\|w^{l+\frac{\gamma+2s}{2}}\partial^\alpha_\beta\partial^{k_1}_tf\|^2_{L^2_{v,x}}	\sum_{|\alpha|+|\beta|\le K+2,\,k_1\le k}\|w^{l+\gamma+2s}\partial^\alpha_\beta\partial^{k_1}_tf\|^2_{L^2_{v,x}}\lesssim \E_{K+2,l+\gamma+2s,k}^2. 
\end{align*} 
Combining the above estimate and taking summation $|\alpha|+|\beta|\le K$, $k\le k_0$ for any $k_0\ge 0$, we have 
\begin{align*}
	\E_{K,l,k_0+1}(t)\lesssim \E_{K,l,0} +\E_{K,l,k_0-1}(t)+ \E_{K+2,l+1+\gamma+2s,k_0}^2(t).
\end{align*} 
Hence, noticing \eqref{106}, for any $T>\tau>0$, we have
\begin{align*}
	\sup_{\tau\le t\le T}\E_{K,l,k_0}(t)\le C_{\tau,k_0}.
\end{align*}
The same standard argument for obtaining the local solution gives the result of Theorem \ref{main2} and the details are omitted for brevity; see also \cite{Guo2012,Gressman2011} and \cite{Deng2020b}. Consequently, by Sobolev embedding, $f\in C^\infty(\R^+_t;C^\infty(\R^3_x;\mathscr{S}(\R^3_v)))$.

\end{proof}

\section{Appendix}

\paragraph{Pseudo-differential calculus}

We recall some notation and theorem of pseudo differential calculus. For details, one may refer to Chapter 2 in the book \cite{Lerner2010} for details. Set $\Gamma=|dv|^2+|d\eta|^2$, but also note that the following are also valid for general admissible metric.
Let $M$ be an $\Gamma$-admissible weight function. That is, $M:\R^{2d}\to (0,+\infty)$ satisfies the following conditions:\\
(a). (slowly varying) there exists $\delta>0$ such that for any $X,Y\in\R^{2d}$, $|X-Y|\le \delta$ implies
\begin{align*}
	M(X)\approx M(Y);
\end{align*}
(b) (temperance) there exists $C>0$, $N\in\R$, such that for $X,Y\in \R^{2d}$,
\begin{align*}
	\frac{M(X)}{M(Y)}\le C\<X-Y\>^N.
\end{align*}
A direct result is that if $M_1,M_2$ are two $\Gamma$-admissible weight, then so is $M_1+M_2$ and $M_1M_2$. Consider symbols $a(v,\eta,\xi)$ as a function of $(v,\eta)$ with parameters $\xi$. We say that
$a\in S(M)=S(M,\Gamma)$ uniformly in $\xi$, if for $\alpha,\beta\in \N^d$, $v,\eta\in\Rd$,
\begin{align*}
	|\partial^\alpha_v\partial^\beta_\eta a(v,\eta,\xi)|\le C_{\alpha,\beta}M,
\end{align*}with $C_{\alpha,\beta}$ a constant depending only on $\alpha$ and $\beta$, but independent of $\xi$. The space $S(M,\Gamma)$ endowed with the seminorms
\begin{align*}
	\|a\|_{k;S(M,\Gamma)} = \max_{0\le|\alpha|+|\beta|\le k}\sup_{(v,\eta)\in\R^{2d}}
	|M(v,\eta)^{-1}\partial^\alpha_v\partial^\beta_\eta a(v,\eta,\xi)|,
\end{align*}becomes a Fr\'{e}chet space.
Sometimes we write $\partial_\eta a\in S(M,\Gamma)$ to mean that $\partial_{\eta_j} a\in S(M,\Gamma)$ $(1\le j\le d)$ equipped with the same seminorms.
We formally define the pseudo-differential operator by
\begin{align*}
	(op_ta)u(x)=\int_\Rd\int_\Rd e^{2\pi i (x-y)\cdot\xi}a((1-t)x+ty,\xi)u(y)\,dyd\xi,
\end{align*}for $t\in\R$, $f\in\S$.
In particular, denote $a(v,D_v)=op_0a$ to be the standard pseudo-differential operator and
$a^w(v,D_v)=op_{1/2}a$ to be the Weyl quantization of symbol $a$. We write $A\in Op(M,\Gamma)$ to represent that $A$ is a Weyl quantization with symbol belongs to class $S(M,\Gamma)$. One important property for Weyl quantization of a real-valued symbol is the self-adjoint on $L^2$ with domain $\S$. 

For composition of pseudodifferential operator we have $a^wb^w= (a\#b)^w$ with 
\begin{align}\label{compostion}
	a\#b = ab + \frac{1}{4\pi i}\{a,b\} + \sum_{2\le k\le \nu}2^{-k}\sum_{|\alpha|+|\beta|=k}\frac{(-1)^{|\beta|}}{\alpha!\beta!}D^\alpha_\eta\partial^\beta_xaD^{\beta}_\eta\partial^\alpha_xb+r_\nu(a,b),
\end{align}where $X=(v,\eta)$,
\begin{align*}
	r_\nu(a,b)(X) & = R_\nu(a(X)\otimes b(Y))|_{X=Y},\\
	R_\nu &= \int^1_0\frac{(1-\theta)^{\nu-1}}{(\nu-1)!}\exp\Big(\frac{\theta}{4\pi i}\<\sigma\partial_X,\partial_Y\>\Big)\,d\theta\Big(\frac{1}{4\pi i}\<\sigma\partial_X,\partial_Y\>\Big)^\nu.
\end{align*}

Let $a_1(v,\eta)\in S(M_1,\Gamma),a_2(v,\eta)\in S(M_2,\Gamma)$, then $a_1^wa_2^w=(a_1\#a_2)^w$, $a_1\#a_2\in S(M_1M_2,\Gamma)$ with
\begin{align*}
	a_1\#a_2(v,\eta)&=a_1(v,\eta)a_2(v,\eta)
	+\int^1_0(\partial_{\eta}a_1\#_\theta \partial_{v} a_2-\partial_{v} a_1\#_\theta \partial_{\eta} a_2)\,d\theta,\\
	g\#_\theta h(Y):&=\frac{2^{2d}}{\theta^{-2n}}\int_\Rd\int_\Rd e^{-\frac{4\pi i}{\theta}\sigma(X-Y_1)\cdot(X-Y_2)}(4\pi i)^{-1}\<\sigma\partial_{Y_1}, \partial_{Y_2}\>g(Y_1) h(Y_2)\,dY_1dY_2,
\end{align*}with $Y=(v,\eta)$, $\sigma=\begin{pmatrix}
	0&I\\-I&0
\end{pmatrix}$.
For any non-negative integer $k$, there exists $l,C$ independent of $\theta\in[0,1]$ such that
\begin{align}\label{sharp_theta}
	\|g\#_\theta h\|_{k;S(M_1M_2,\Gamma)}\le C\|g\|_{l,S(M_1,\Gamma)}\|h\|_{l,S(M_2,\Gamma)}.
\end{align}
Thus if $\partial_{\eta}a_1,\partial_{\eta}a_2\in S(M'_1,\Gamma)$ and $\partial_{v}a_1,\partial_{v}a_2\in S(M'_2,\Gamma)$, then $[a_1,a_2]\in S(M'_1M'_2,\Gamma)$, where $[\cdot,\cdot]$ is the commutator defined by $[A,B]:=AB-BA$.

We can define a Hilbert space $H(M,\Gamma):=\{u\in\S':\|u\|_{H(M,\Gamma)}<\infty\}$, where
\begin{align}\label{sobolev_space}
	\|u\|_{H(M,\Gamma)}:=\int M(Y)^2\|\varphi^w_Yu\|^2_{L^2}|g_Y|^{1/2}\,dY<\infty,
\end{align}and $(\varphi_Y)_{Y\in\R^{2d}}$ is any uniformly confined family of symbols which is a partition of unity. If $a\in S(M)$ is a isomorphism from $H(M')$ to $H(M'M^{-1})$, then $(a^wu,a^wv)$ is an equivalent Hilbertian structure on $H(M)$. Moreover, the space $\S(\Rd)$ is dense in $H(M)$ and $H(1)=L^2$.

Let $a\in S(M,\Gamma)$, then
$a^w:H(M_1,\Gamma)\to H(M_1/M,\Gamma)$ is linear continuous, in the sense of unique bounded extension from $\S$ to $H(M_1,\Gamma)$.
Also the existence of $b\in S(M^{-1},\Gamma)$ such that $b\#a = a\#b = 1$ is equivalent to the invertibility of $a^w$ as an operator from $H(MM_1,\Gamma)$
onto $H(M_1,\Gamma)$ for some $\Gamma$-admissible weight function $M_1$.

The following Lemmas come from \cite{Deng2020a}.

\begin{Lem}\label{inverse_bounded_lemma}Let $m,c$ be $\Gamma$-admissible weight and $a\in S(m)$.
	Assume $a^w:H(mc)\to H(c)$ is invertible.
	If $b\in S(m)$, then there exists $C>0$, depending only on the seminorms of symbols to $(a^w)^{-1}$ and $b^w$, such that for $f\in H(mc)$,
	\begin{align*}
		\|b(v,D_v)f\|_{H(c)}+\|b^w(v,D_v)f\|_{H(c)}\le C\|a^w(v,D_v)f\|_{H(c)}.
	\end{align*}
	Consequently, if $a^w:H(m_1)\to L^2\in Op(m_1)$, $b^w:H(m_2)\to L^2\in Op(m_2)$ are invertible, then for $f\in\S$, 
	\begin{align*}
		\|b^wa^wf\|_{L^2}\lesssim \|a^wb^wf\|_{L^2},
	\end{align*}where the constant depends only on seminorms of symbols to $a^w,b^w,(a^w)^{-1},(b^w)^{-1}$.
\end{Lem}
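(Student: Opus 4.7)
The plan is to derive both assertions from standard pseudo-differential machinery collected in the appendix: the composition formula \eqref{compostion}, the mapping property $a^w: H(Mc)\to H(c)$ for $a\in S(M,\Gamma)$, and the fact (stated at the end of the appendix) that invertibility of a Weyl quantization $a^w: H(mc)\to H(c)$ with $a\in S(m)$ is equivalent to the existence of a parametrix $p\in S(m^{-1})$ satisfying $p\#a = a\#p = 1$, with seminorms of $p$ controlled by those of $a$ together with the operator norm of $(a^w)^{-1}$. Thus $(a^w)^{-1} = p^w \in Op(S(m^{-1},\Gamma))$.

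For the first inequality, I would simply write $b(v,D_v) = b(v,D_v)\,p^w\,a^w$ and $b^w = b^w\,p^w\,a^w$. Since $b, b^w \in S(m)$ (the Weyl and standard quantizations share symbol classes) and $p\in S(m^{-1})$, the composition formula \eqref{compostion} yields $b(v,D_v)\circ p^w \in Op(S(1,\Gamma))$ and $b^w\circ p^w \in Op(S(1,\Gamma))$. Operators in $Op(S(1,\Gamma))$ act boundedly on every weighted Sobolev space $H(c,\Gamma)$, with norm controlled by finitely many seminorms of the composed symbol; by \eqref{sharp_theta} these in turn depend only on finitely many seminorms of $b$ (or $b^w$) and of $(a^w)^{-1}$. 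Applying this bound to $a^w f \in H(c)$ gives the claimed estimate
\[
\|b(v,D_v)f\|_{H(c)} + \|b^w f\|_{H(c)} \lesssim \|a^w f\|_{H(c)}.
\]

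For the consequence, I would reduce it to the first part by a change of variable. Setting $g := b^w f \in L^2$ (well-defined because $b^w$ is invertible $H(m_2)\to L^2$), write
\[
b^w a^w f = \bigl(b^w\,a^w\,(b^w)^{-1}\bigr)\,g.
\]
By Step~1 applied to $b^w$, $(b^w)^{-1} \in Op(S(m_2^{-1},\Gamma))$, and composition yields $b^w a^w (b^w)^{-1} \in Op(S(m_2\,m_1\,m_2^{-1},\Gamma)) = Op(S(m_1,\Gamma))$. Invoking the first inequality with $m=m_1$, $c=1$, and with the role of $b$ played by this composed operator (which lies in the correct symbol class $S(m_1)$ by construction), I get
\[
\|b^w a^w f\|_{L^2} = \bigl\|\bigl(b^w a^w (b^w)^{-1}\bigr)\,g\bigr\|_{L^2} \lesssim \|a^w g\|_{L^2} = \|a^w b^w f\|_{L^2},
\]
with constants depending only on seminorms of $a^w$, $b^w$ and their parametrices. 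By symmetry one also gets the reverse inequality, which is what the lemma is really saying.

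The main obstacle is the input to Step~1: converting the abstract invertibility of $a^w$ (as a bounded operator between Hilbert spaces) into the existence of a parametrix with symbol in the correct class $S(m^{-1},\Gamma)$ and with quantitative seminorm control. This is a Beals-type regularity statement for Weyl operators; the paper assumes it as part of the Lerner calculus rather than reproving it. Everything else in the argument is a direct application of the composition formula and the uniform $L^2$-boundedness of $Op(S(1,\Gamma))$, so the proof should be essentially two lines once the parametrix is in hand.
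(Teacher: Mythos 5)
Your proof is correct. Note that the paper does not actually prove this lemma here; it states ``The following Lemmas come from \cite{Deng2020a},'' deferring to the author's earlier paper, so there is no in-paper proof to compare against. Judged on its own, your argument is the standard derivation from the facts collected in the appendix: write $b(v,D_v)f=(b(v,D_v)\,p^w)\,a^wf$ and $b^wf=(b^w p^w)\,a^wf$ with $p^w=(a^w)^{-1}$, observe that both prefactors lie in $Op(1,\Gamma)$ by the composition formula and are therefore bounded on $H(c)$ with norm controlled by finitely many seminorms of $b$ and $p$; for the consequence, set $g=b^wf$, conjugate $a^w$ by $b^w$, and apply the first part with $c=1$, $m=m_1$. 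The caveat you flag---needing a Beals-type inversion theorem to upgrade abstract invertibility of $a^w$ to a parametrix $p\in S(m^{-1})$---is exactly the statement recorded at the end of the appendix, and in any case the lemma's constant is explicitly allowed to depend on the seminorms of the symbol of $(a^w)^{-1}$ rather than being reduced back to those of $a$, so no extra quantitative input is required; your concern does not open a gap. A marginally more direct route to the second inequality is to factor $b^w a^w=\big(b^w a^w (b^w)^{-1}(a^w)^{-1}\big)\,(a^w b^w)$, observe the prefactor has symbol in $S(1)$ and is therefore $L^2$-bounded, and conclude directly, but your reduction to the first part is equally valid.
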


\begin{Lem}
	Let $m,c$ be $\Gamma$-admissible weight and $a^{1/2}\in S(m^{1/2})$.
	Assume $(a^{1/2})^w:H(mc)\to H(c)$ is invertible and $L\in S(m)$. Then 
	\begin{align*}
		(Lf,f)_{L^2} = (\underbrace{((a^{1/2})^w)^{-1}L}_{\in S(m^{1/2})}f,(a^{1/2})^wf)_{L^2}\lesssim \|(a^{1/2})^wf\|^2_{L^2}.
	\end{align*}
\end{Lem}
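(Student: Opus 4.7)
The plan is to use the self-adjointness of $(a^{1/2})^w$ (since $a^{1/2}$ is a real symbol, so its Weyl quantization is formally self-adjoint on $L^2$) together with the hypothesized invertibility, in order to reduce everything to $L^2$-boundedness of a pseudo-differential operator whose symbol lies in $S(m^{1/2})$. First I would verify that the composition $B:=((a^{1/2})^w)^{-1}L$ is itself a Weyl quantization whose symbol lies in $S(m^{1/2})$. By the inverse theorem for the Weyl calculus recalled in the appendix (invertibility of $a^w:H(mc)\to H(c)$ for some admissible weight $c$ is equivalent to the existence of a two-sided parametrix $b\in S(m^{-1})$ with $b\#a=a\#b=1$), applied to the symbol $a^{1/2}\in S(m^{1/2})$, we obtain $((a^{1/2})^w)^{-1}=c^w$ for some $c\in S(m^{-1/2})$. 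The composition rule \eqref{compostion} applied to $c\in S(m^{-1/2})$ and $\sigma(L)\in S(m)$ then places $\sigma(B)=c\#\sigma(L)$ in $S(m^{-1/2}\cdot m)=S(m^{1/2})$, which is precisely what the underbrace in the statement asserts.

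Next I would rewrite the quadratic form using the factorization $L=(a^{1/2})^w\circ B$ and the self-adjointness of $(a^{1/2})^w$, obtaining
\begin{equation*}
(Lf,f)_{L^2} = ((a^{1/2})^w Bf,f)_{L^2} = (Bf,(a^{1/2})^w f)_{L^2},
\end{equation*}
which is the identity displayed in the lemma. Cauchy--Schwarz then gives $|(Lf,f)_{L^2}|\le \|Bf\|_{L^2}\,\|(a^{1/2})^wf\|_{L^2}$. Since $B\in Op(S(m^{1/2}))$, the standard $L^2$-boundedness theorem of the Weyl calculus yields $\|Bf\|_{L^2}\lesssim \|f\|_{H(m^{1/2})}$, and the invertibility of $(a^{1/2})^w$ furnishes the equivalence $\|f\|_{H(m^{1/2})}\approx\|(a^{1/2})^wf\|_{L^2}$ (exactly the equivalence used throughout the paper for the working weight $\tilde a$). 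Chaining these two bounds closes the estimate.

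The main obstacle --- and really the only content beyond bookkeeping --- is the symbol-class identification of the inverse, that is, deducing $((a^{1/2})^w)^{-1}\in Op(S(m^{-1/2}))$ from the functional-analytic invertibility $(a^{1/2})^w:H(mc)\to H(c)$. This is the nontrivial step and it appeals to the Beals-type characterization of the Weyl class in Lerner's framework; once granted, the Cauchy--Schwarz inequality and the composition formula do all remaining work, and no further estimate specific to the Boltzmann operator is needed.
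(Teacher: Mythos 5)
Your argument is correct and is exactly the reasoning the paper's one-line display is encoding: factor $L=(a^{1/2})^w\circ((a^{1/2})^w)^{-1}L$, use formal self-adjointness of $(a^{1/2})^w$ (which, as you note, presupposes $a^{1/2}$ real --- an assumption the lemma leaves implicit), identify $((a^{1/2})^w)^{-1}\in Op(S(m^{-1/2}))$ via the Beals/Lerner inverse criterion quoted in the appendix so that the composed operator lies in $Op(S(m^{1/2}))$, and finish with Cauchy--Schwarz and the equivalence $\|f\|_{H(m^{1/2})}\approx\|(a^{1/2})^wf\|_{L^2}$. The only streamlining available is that your steps 5--6 are precisely the content of the preceding Lemma \ref{inverse_bounded_lemma} applied with $b$ the symbol of $((a^{1/2})^w)^{-1}L$, so the bound $\|Bf\|_{L^2}\lesssim\|(a^{1/2})^wf\|_{L^2}$ can be cited directly rather than re-derived through $H(m^{1/2})$.
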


\paragraph{Carleman representation}

For measurable function $F(v,v_*,v',v'_*)$, if any sides of the following equation is well-defined, then
\begin{align}
	&\int_{\R^d}\int_{\mathbb{S}^{d-1}}b(\cos\theta)|v-v_*|^\gamma F(v,v_*,v',v'_*)\,d\sigma dv_*\notag\\
	&\quad=\int_{\R^d_h}\int_{E_{0,h}}\tilde{b}(\alpha,h)\1_{|\alpha|\ge|h|}\frac{|\alpha+h|^{\gamma+1+2s}}{|h|^{d+2s}}F(v,v+\alpha-h,v-h,v+\alpha)\,d\alpha dh,\label{Carleman}
\end{align}where $\tilde{b}(\alpha,h)$ is bounded from below and above by positive constants, and $\tilde{b}(\alpha,h)=\tilde{b}(|\alpha|,|h|)$, $E_{0,h}$ is the hyper-plane orthogonal to $h$ containing the origin.


\paragraph{Acknowledgments:} The author would thank Prof. Tong Yang for the valuable comments on the manuscript. 

\small
\bibliographystyle{plain}
\bibliography{1}

\end{document}